\title{Moduli Spaces of Morse Functions for Persistence}
\author{Michael J. Catanzaro\thanks{Iowa State University,  mjcatanz@iastate.edu}
\and  Justin M. Curry\thanks{University at Albany SUNY,  jmcurry@albany.edu}
\and Brittany Terese Fasy\thanks{Montana State University,  brittany.fasy@montana.edu}
\and J\=anis Lazovskis\thanks{University of Aberdeen, janis.lazovskis@abdn.ac.uk}
\and Greg Malen\thanks{Duke University, gmalen@math.duke.edu}
\and Hans Riess\thanks{University of Pennsylvania, hmr@seas.upenn.edu}
\and Bei Wang\thanks{University of Utah, beiwang@sci.utah.edu.}
\and Matthew Zabka\thanks{Southwest Minnesota State University,  Matthew.Zabka@smsu.edu}}
\date{\today}
\begin{document}

\maketitle
\begin{abstract}
We consider different notions of equivalence for Morse functions on the sphere
in the context of persistent homology and introduce new invariants to study
these equivalence classes. These new invariants are as simple---but more
discerning than---existing topological invariants, such as persistence barcodes
and Reeb graphs. We give a method to relate any two Morse--Smale vector fields
on the sphere by a sequence of fundamental moves by considering graph-equivalent
Morse functions. We also explore the combinatorially rich world of
height-equivalent Morse functions, considered as height functions of embedded
spheres in $\Rspace^3$. Their level set invariant, a poset generated by nested
disks and annuli from level sets, gives insight into the moduli space of Morse
functions sharing the same persistence barcode.

\end{abstract}


\section{Introduction}
\label{sec:introduction}

Morse theory describes the topology of a manifold $\Mspace$ by studying well-behaved functions $f: \Mspace \to \Rspace$ \cite{Milnor1963}. This \emph{well-behavedness} is qualified by the notion of a \emph{Morse function}: a smooth real-valued function with no degenerate critical points. One of the main ideas of Morse theory is to associate the topological changes of the sublevel sets $\Mspace_a = f^{-1}(\infty, a]$, as $a$ varies, with the critical points of $f$.
From an algebraic perspective, Morse functions are \emph{effective} in topological problems due to their local rigidity; that is, critical points of Morse functions have a very simple local, quadratic structure (up to a change of coordinates) \cite{Nicolaescu2007}. From a homological perspective, the relationship between the topology of $\Mspace$ and the critical points of $f$ is described by the powerful Morse inequalities, which have both topological and geometric significance.

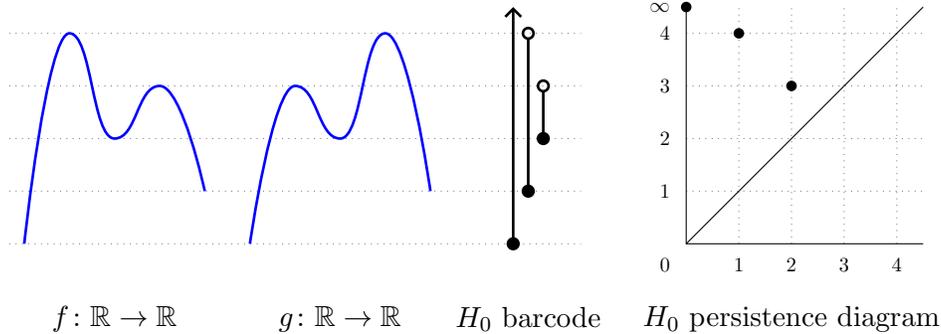
\begin{figure}[!ht]\centering
\begin{tikzpicture}
\foreach \y in {0,...,4}{
  \draw[opacity=.5,dotted] (0,\y*.7)--(7.6,\y*.7);
}
\begin{scope}[shift={(.2,0)}]
\draw[line width=1pt,blue,yscale=.7,xscale=.6] (0,0) .. controls +(0:0) and +(180:.5) .. (1,4)
            .. controls +(0:.5) and +(180:.5) .. (2,2)
            .. controls +(0:.5) and +(180:.5) .. (3,3)
            .. controls +(0:.5) and +(0:0) .. (4,1);
\node at (2*.6,-1) {$f\colon \Rspace\to \Rspace$};
\end{scope}
\begin{scope}[shift={(3.2,0)}]
\draw[line width=1pt,blue,yscale=.7,xscale=.6] (0,0) .. controls +(0:0) and +(180:.5) .. (1,3)
            .. controls +(0:.5) and +(180:.5) .. (2,2)
            .. controls +(0:.5) and +(180:.5) .. (3,4)
            .. controls +(0:.5) and +(0:0) .. (4,1);
\node at (2*.6,-1) {$g\colon \Rspace\to \Rspace$};
\end{scope}
\begin{scope}[shift={(6.7,0)},yscale=.7] 
\draw[barcinf] (0,0)--(0,4.5);
\draw[barco] (.2,1)--(.2,4);
\draw[barco] (.4,2)--(.4,3);
\node at (.2,-1/.7) {$H_0$ barcode};
\end{scope}
\begin{scope}[shift={(9,0)},scale=.7]   
\draw (0,0)--(4.5,4.5);
\foreach \x in {1,...,4}{
  \draw[opacity=.5,dotted] (\x,0)--(\x,4.5);
  \draw[opacity=.5,dotted] (0,\x)--(4.5,\x);
  \node[scale=.7] at (\x,-.4) {\x};
  \node[scale=.7] at (-.4,\x) {\x};
}
\draw (0,4.5)--(0,0)--(4.5,0);
\node[scale=.7] at (-.4,-.4) {0};
\node[scale=.7] at (-.5,4.5) {$\infty$};
\fill (0,4.5) circle (.1);
\fill (1,4) circle (.1);
\fill (2,3) circle (.1);
\node at (2,-1/.7) {$H_0$ persistence diagram};
\end{scope}
\end{tikzpicture}
\caption{The graphs of two different functions $f, g: \Rspace \to \Rspace$
    restricted to a finite interval in $\Rspace$, and their identical
    one-dimensional barcode and persistence diagram based on sublevel set
    filtration. The longest bar in the barcode captures the connected component;
    while the 2nd longest bar is created due to the boundary condition.}
    \label{fig:equivalent-barcodes}
\end{figure}

Persistent homology is a relatively new tool for discriminating functions on topological spaces based on how the shape of their (sub)level sets evolve. In the standard setting, persistence is an extension of Morse theory, as it studies homology groups of sublevel sets connected by inclusion maps, $\Mspace_a \xhookrightarrow{} \Mspace_b$ (for $a \leq b$). The evolution of shape is captured by what is known as the \emph{persistence diagram} or \emph{barcode}~\cite{EdelsbrunnerHarer2010,Ghrist2008}. Barcodes enjoy properties such as \emph{simplicity}, as a barcode is simply a collection of intervals in the real line; and \emph{stability}, as small perturbations of shape produce small perturbations of the barcode. Both of these properties make persistence an ideal tool for studying the shape of data, with wide applications to science and engineering; see \cite{EdelsbrunnerMorozov2012} for a survey.

We are interested in exploring different moduli spaces of Morse functions from the perspective of persistence. We characterize the set of Morse functions that give rise to the same barcode, see Figure~\ref{fig:equivalent-barcodes} for an example. Some of these functions should be considered as different, because taking one function to another requires a significant deformation. In other words, we are putting different equivalence relations on the space of Morse functions that respect persistence, i.e.,~two functions can only be deemed equivalent only if they have the same barcode, but simply having the same barcode does not guarantee equivalence.  Each choice of equivalence relation leads to a different moduli space structure on the space of Morse functions; and each equivalence class has an  interesting combinatorial structure that can be used practically to enrich the barcode.

Instead of focusing on \emph{any} Morse function $f\colon \Mspace \to \Rspace$, we are initially motivated by a simpler question by considering $\Mspace = \Sspace^2$: \emph{How many Morse functions on the sphere~$\Sspace^2$ have the same barcode?} Or more precisely: \emph{How many equivalence classes of Morse functions on the sphere have the same barcode?} Asking such an open question is a first step towards exploring three areas of interests described below. For simplicity, we sometimes require more from $f$, such as factoring as an embedding into Euclidean space followed by a projection, so as to exclude pathological examples like the Alexander horned sphere.

\textbf{Topological data analysis:} The simplicity of the barcode is both a
benefit and a drawback, as information about the space and function is lost
during its computation. Precisely how much information is lost?  How does the
barcode compare to other topological invariants, and does it fail to capture
some key topological and geometric features? How does the analysis on a sphere
extend to a compact surface of genus $g$?  From a statistical perspective, how
common is a particular barcode, and which one should we expect in a
\emph{general} situation?

\textbf{Shape analysis:} Properties of shapes, such as being convex, nested,
elongated, or circular, are mostly geometric in nature.   Do the discriminative
capabilities of persistence---known to be mostly topological---also capture these
geometric properties? Recent work by \cite{BubenikHullPatel2019} has shown that
short intervals in barcodes encode geometric information; in particular,
persistent homology detects the curvature of disks from which points are
sampled.  How much geometric information is preserved by the barcodes?

Topological descriptors, such as Reeb graphs~\cite{Reeb46}
and Morse--Smale complexes~\cite{EdelsbrunnerHarerNatarajan2003,EdelsbrunnerHarerZomorodian2003},
provide
an abstract and compact representation of data modeled by Morse
functions. For
instance, the small number of cells in a Morse--Smale complex can significantly
reduce the number of cells when discretizing a shape, while keeping the same
topological properties. Does persistence allow us to reduce the carried data
even further? 

We refer the interested reader to the survey paper by \cite{biasotti2008describing} for a summary of shape analysis's relationship to persistence and Reeb graphs. 

\textbf{Dynamical systems:}
What is the space of all Morse or Morse--Smale vector fields, with and without the requirement of identical persistence as a constraint? Equivalence classes of Morse--Smale vector fields on 2-manifolds have been studied previously \cite{Peixoto1973,Fleitas1975,GutierrezMelo1977}; however, not in the context of persistence. It is known that the set of Morse--Smale vector fields on orientable surfaces is dense \cite[Theorem 2.6]{PalisMelo1982}. Given two Morse--Smale vector fields which are not topologically equivalent, can we derive a distance measure between them? What is the minimal number of operations (critical pair cancellation or reverse-cancellation) to transform one to another?

\subsection{Research Objectives}
Our overarching goal is to classify and algorithmically construct all equivalence classes of Morse functions on a manifold, where the equivalence is captured by functions, embeddings, or dynamics, using barcodes as constraints. Fixing a homology degree, the \emph{persistence map} is the map that takes a function $f$ on $\Mspace$ to its associated barcode~\cite{Curry2017}.  We classify the image, preimage, and embeddings of the preimage of the persistence map under different notions of equivalence relations. While there have been approaches to interpreting the persistence map functorially \cite{BauerLesnick2015,SilvaMunchPatel2016}, and such categorical generalizations are of interest to us (see Section \ref{sec:results-posets}), considering the persistence map simply as a map rather than a functor does not take away from our analysis.

Let $\Mspace$ denote a smooth manifold, and let $f\colon \Mspace \to \Rspace$ denote a Morse function.  In this paper, we focus on characterizing Morse functions on the sphere $\Mspace = \Sspace^2$ that produce the same barcode.

\begin{itemize}

\item \textbf{Objective 1: Classifying the image.} Fix $\Mspace$ (for example,~$\Sspace^2$) and the number and type of critical points that respect the Euler characteristic (for example,~two maxima, one saddle point, and one minimum on $\Sspace^2$). Enumerate the barcodes that correspond to sublevel set filtrations of functions $f\colon \Mspace \to \Rspace$.  This is a computational objective, and can be interpreted by counting Morse--Smale graphs (Section~\ref{sec:background}) via elementary moves (Section~\ref{sec:results-abcd}) instead of functions. In this setting, we declare that two  Morse functions $f, g\colon \Mspace \to \Rspace$ are \emph{indistinguishable} if they are \emph{graph equivalent} (Section~\ref{sec:background}).

\item \textbf{Objective 2: Classifying embeddings of the preimage.} Fix $\Mspace$ and a barcode, and ask how many different embeddings $\iota: \Mspace \to \Rspace^d$ of the space $\Mspace$ into Euclidean space $\Rspace^d$ produce the given barcode when projected onto a fixed axis via $\pi \colon \Rspace^d \to \Rspace$. In other words, we study the behavior of the function $f:= \pi \circ \iota$. This is related to the persistent homology transform~\cite{TurnerMukherjeeBoyer2014}, which asks the same question, but for all directions. Here, two Morse functions $f, g: \Mspace \to \Rspace$ (where $f\colonequals \pi \circ \iota$ and $g\colonequals \pi \circ \iota'$) are considered \emph{indistinguishable} if (i) they generate the same barcode and (ii) they are \emph{poset equivalent} (Section~\ref{sec:background}).

\item \textbf{Objective 3: Classifying the preimage.} Fix $\Mspace$ and a barcode, and ask for all functions, up to \emph{level-set preserving equivalence}, whose image is the given barcode. This topological objective benefits from the Reeb graph~\cite{Reeb46}, which enriches the barcode by distinguishing different type of persistence pairings, and its refinement the \emph{decorated Morse--Smale graph}, introduced in Section \ref{sec:morsefuncs-persistence}. Two Morse functions $f, g\colon \Mspace \to \Rspace$ are  \emph{indistinguishable} in this context if (i) they generate the same barcode and (ii) they give rise to isomorphic decorated Morse--Smale graphs.
\end{itemize}

Classifying Morse functions under different notions of equivalences via the persistence map is well-motivated, as persistence has emerged as a central tool of topological data analysis. Applications of persistence include shape analysis~\cite{CarlssonZomorodianCollins2004,PoulenardSkrabaOvsjanikov2018}, cancer research~\cite{SeemannShulmanGunaratne2012,LockwoodKrishnamoorthy2015,HofmannKrufczikHeermann2018,QaiserTsangTaniyama2019} and material sciences~\cite{LeeBarthelDlotko2018}. By exploring the above moduli spaces, we aim to build a better or enriched  barcode for real-world applications.

\subsection{Related Work}

The mathematical study of height functions through level sets (contour lines) and
relationships between them dates back to at least the 1850s, when Cayley
classified topographical maps based on configurations of contour lines~\cite{cayley1859xl}. 
\cite{maxwell1870hills} extended this work and laid the foundation for
Morse theory.
\cite{Reeb46} defined a graphical invariants to classify Morse functions; today, we call
this invariant the \emph{Reeb graph}.
Later, \cite{Arnold1991,Arnold1992} defined geometric equivalence
classes of functions on $\Sspace^1$ using a notion of ``snakes". The
classification for functions on surfaces is much more involved, and has been
considered in certain cases~\cite{Arnold2007,Nicolaescu2007}, the latter of which analyzed
homological and geometric equivalence of Morse functions on $\Sspace^2$.
Further, ~\cite{Kulinich1998} and ~\cite{Sharko1996,Sharko2003} classified Morse functions
on surfaces, up to geometric equivalence, using Reeb graphs. Reeb graphs use
level-sets of functions, and the analogous join tree or merge tree structure
with sublevel sets was developed by \cite{pascucci2004} and \cite{Curry2017}.
Further implications of Reeb graphs for persistent homology were considered by~\cite{difabiolandi2016,bauerlandimemoli2018}.

From the dynamical system perspective, \cite{Peixoto1973} classified
Morse--Smale flows on two-manifolds up to trajectory topological equivalence
using the concept of ``distinguished graph". Subsequent work by
\cite{Fleitas1975} and  \cite{Wang1990} gave simpler invariants for Morse flows
on two-manifolds. \cite{OshemkovSharko1998} also considered the  problem of
topological trajectory classification of Morse--Smale flows on closed surfaces
and introduced a ``three-color graph" as another alternative to the Peixoto
invariant. Morse flows are also used to determine two-dimensional Hamiltonian flows; and \cite{SakajoYokoyama2018} developed tree representations for such flows. Both three-color graphs and tree representations are combinatorial codings that detail processes for constructing a flow by adding pairs of critical points. Finally, \cite{AdamsCarlsson2015} used topological arguments like the ones in Section~\ref{sec:results-posets} to decompose spaces for network evasion paths, however they did not use the language of Morse functions. Our work focuses on invariants based on cell decompositions of the domain using gradient flows. It is primarily concerned with the structure of the Morse--Smale complex and its interplay with persistence, which is distinct from previous approaches.

The formal underpinnings of the sequence of moves described in
Section~\ref{sec:results-abcd} originate from Cerf theory (also known as pseudoisotopy
theory)~\cite{Cerf1970,hatcher_pseudo-isotopies_1973}. In proving his celebrated
``Pseudoisotopy Theorem'', Cerf described the low codimension strata of a particular stratification
on the space of smooth functions on a smooth compact manifold. The codimension-zero stratum consists of Morse functions with distinct critical values, and the codimension-one stratum consists of either `generalized Morse functions' (those with a single cubic `birth-death' singularity), or Morse functions with precisely two critical values equal. Furthermore, Cerf showed that a generic or typical path of smooth functions lies in the codimension-zero stratum for all but finitely many `times' (thinking of the path parameter as `time'), at which points the function lies in the codimension-one stratum. The moves described in Section~\ref{sec:results-abcd} are inspired by moving across the codimension-one stratum in the space of smooth functions and passing through a generalized Morse function.  The statement and proof of Theorem~\ref{thm:exh_moves} relies on this description of the stratification and adapts these ideas to the combinatorics of the Morse--Smale graph of a surface.

Performing one of the moves introduced in Section~\ref{sec:results-abcd} can be thought of as a perturbation of the initial Morse-Smale flow (albeit a fairly large one from the perspective the aforementioned stratification of Cerf). Perturbations in the space of all vector fields have been studied by other authors (see~\cite{szymczak2012hierarchy} and its extensive references). This perspective might be useful for future work, but we do not take this approach now.

\subsection{Overview}
Our main contributions in this paper are:
\begin{itemize}
\item New notions of equivalence among (embeddings of) Morse functions containing persistence information;
\item A set of fundamental operations on Morse--Smale vector fields that relate all such vector fields;
\item A foundation for counting the number of Morse functions producing the same barcode.
\end{itemize}

In Section \ref{sec:morsefuncs-persistence} we begin with a background to the functions of interest in the context of persistent homology, with new and existing notions of equivalence among these functions in Section \ref{sec:morsefunc-equivalence}. Section \ref{sec:results-abcd} contains a method for relating Morse--Smale vector fields on the sphere, building from existing decomposition results \cite{EdelsbrunnerHarerZomorodian2003}. Section \ref{sec:results-posets} explores one of the new invariants, the nesting poset, and describes a zigzag poset structure in Corollary \ref{cor:zigzag}, making steps to extend it combinatorially in Conjecture \ref{conj:zigzag-algebra}, with a goal of developing an enriched barcode. Section \ref{sec:results-counting} presents a lower bound in Conjecture \ref{conj:counting} to counting height-equivalence classes of Morse functions that factor through $\Rspace^3$ as smooth embeddings, which leads to better understanding of Morse functions by their barcode.

\section{Technical Background}
\label{sec:background}

We first summarize relevant aspects of Morse theory,  see \cite{Milnor1963}, \cite{Matsumoto1997}, and \cite{Nicolaescu2007} for detailed expositions on the topic. We then review known notions of equivalence among Morse functions. After that, we introduce the notions of graph equivalence, height equivalence, and poset equivalence. We conclude by a comparison of equivalence relations among Morse functions.

\subsection{Morse Functions and Persistence}
\label{sec:morsefuncs-persistence}

Let $\Mspace$ be a smooth, compact, orientable manifold, equipped with a Riemannian metric $g_{\Mspace}$, and $f \colon \Mspace \to \Rspace$ be a smooth function.

\subsubsection{Morse Functions}
A critical point $p$ of $\Mspace$ is \emph{non-degenerate} if there exists a chart $(x_1,\ldots, x_n)$ on a neighborhood $U$ of $p$ such that
\begin{enumerate}
\item $x_i(p) = 0$ for all $i$, and
\item $f(x) = f(p) - x_1^2 -\cdots - x_\lambda^2 + x_{\lambda+1}^2 +\cdots + x_n^2$.
\end{enumerate}
The number $\lambda$ is the \emph{(Morse) index} of the critical point $p$, and is independent of the choice of chart. The index of a critical point is an integer between $0$ and the dimension of $\Mspace$. A smooth function $f\colon \Mspace \to \Rspace$ is a \emph{Morse function} if all its critical points are non-degenerate.  Furthermore, $f$ is an \emph{excellent Morse function} if all critical points have distinct function values~\cite{Nicolaescu2008}. All Morse functions considered in this paper are excellent, and referred to simply as Morse functions.

\subsubsection{Handle Decomposition}
For $f \colon \Mspace \to \Rspace$ a Morse function, let $\Mspace_t \colonequals f^{-1}(-\infty, t] = \{ x \in \Mspace \mid f(x) \leq t\}$ denote sublevel sets of $f$. Morse theory studies how $\Mspace_t$ changes as the parameter $t$ changes. There are two fundamental theorems regarding handle decomposition of manifolds in Morse theory~\cite{Milnor1963,Matsumoto1997,Nicolaescu2007}.
\begin{theorem}[{\cite[Theorem 3.1]{Milnor1963}}]
\label{theorem:CMT-A}
If $f$ has no critical values in the real interval $[a,b]$, then $\Mspace_a$ and $\Mspace_b$ are diffeomorphic.
\end{theorem}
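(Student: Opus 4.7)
The plan is to prove this by constructing an explicit diffeomorphism $\Mspace_a \to \Mspace_b$ using the flow of a suitably rescaled gradient vector field. First, I would use the Riemannian metric $g_{\Mspace}$ to define the gradient vector field $\nabla f$, characterized by $g_{\Mspace}(\nabla f, Y) = df(Y)$ for every vector field $Y$. Since there are no critical values in $[a,b]$, the preimage $f^{-1}([a,b])$ contains no critical points, so $\nabla f$ is nowhere-vanishing on this preimage. In particular, $g_{\Mspace}(\nabla f, \nabla f) = \|\nabla f\|^2$ is strictly positive on $f^{-1}([a,b])$.

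The key construction is a rescaled vector field $X$ that moves points ``upward'' at unit $f$-speed. On a neighborhood $U$ of $f^{-1}([a,b])$ disjoint from the critical set, set
\[
X = \frac{1}{\|\nabla f\|^2}\, \nabla f,
\]
so that $X(f) = df(X) = 1$ on $U$. Next, I would multiply $X$ by a smooth compactly supported cutoff $\rho\colon \Mspace \to [0,1]$ that equals $1$ on $f^{-1}([a,b])$ and vanishes outside $U$, thereby extending $\rho X$ by zero to a globally defined, smooth, compactly supported vector field on $\Mspace$. Compact support guarantees that the flow $\varphi_t$ of $\rho X$ exists for all $t \in \Rspace$ and each $\varphi_t$ is a diffeomorphism of $\Mspace$.

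The crucial computation is that along any flow line $\gamma(t) = \varphi_t(p)$ with $\gamma(t) \in f^{-1}([a,b])$, we have $\frac{d}{dt}(f \circ \gamma)(t) = (\rho X)(f)\big|_{\gamma(t)} = 1$. Therefore, if $p \in f^{-1}(a)$, then $f(\varphi_t(p)) = a + t$ as long as the orbit remains in $f^{-1}([a,b])$, which by a connectedness-of-intervals argument is exactly for $t \in [0, b-a]$. Consequently $\varphi_{b-a}$ carries $f^{-1}(a)$ onto $f^{-1}(b)$, and since $\varphi_{b-a}$ preserves sublevel sets outside the cutoff region and increases $f$-values by $b-a$ inside, it maps $\Mspace_a$ diffeomorphically onto $\Mspace_b$, with inverse $\varphi_{-(b-a)}$.

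The only subtle point is ensuring that flow lines starting in $f^{-1}(a)$ actually stay in $f^{-1}([a,b])$ for the full time interval $[0, b-a]$ so that the unit-speed identity $f(\varphi_t(p)) = a+t$ persists; this follows because $f \circ \gamma$ is continuous and strictly increasing at unit rate while in the cutoff region, so it cannot exit $[a,b]$ before time $b-a$. Compactness of $\Mspace$ (or of $f^{-1}([a,b])$) is what lets the cutoff $\rho$ be chosen with compact support, and this is the one place where the hypothesis on $\Mspace$ is genuinely used.
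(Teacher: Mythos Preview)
Your argument is correct and is precisely the classical proof from \cite[Theorem~3.1]{Milnor1963}: normalize the gradient to have unit $f$-speed, cut it off to obtain a complete flow, and use the time-$(b-a)$ diffeomorphism. The paper does not supply its own proof of this statement; it simply records it as background with a citation to Milnor (and later, in the proof of Theorem~\ref{theorem:nesting-poset-A}, invokes the closely related product decomposition $f^{-1}[a,b]\cong f^{-1}(a)\times[a,b]$ from \cite[Theorem~2.31]{Matsumoto1997}, proved by the same gradient-flow construction). So there is nothing to compare: you have reproduced the cited proof.
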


\begin{theorem}[{\cite[Theorem 3.2]{Milnor1963},\cite[page 77]{Matsumoto1997}}]
\label{theorem:CMT-B}
\edits{Let $p$ be a critical point of index $\lambda$ with critical value $c = f(p)$.  Suppose that for some $\varepsilon>0$, the set $f^{-1}([p-\varepsilon, p+\varepsilon])$ contains no critical points of $f$ besides $p$. Then the space $\Mspace_{c+\epsilon}$ is diffeomorphic to the manifold obtained by attaching a $\lambda$-handle to $\Mspace_{c-\epsilon}$. That is, $\Mspace_{c+\epsilon}$ is diffeomorphic to $\Mspace_{c-\epsilon} \cup \Dspace^{\lambda} \times \Dspace^{d-\lambda}$, where $\Dspace^\lambda$ denotes a $\lambda$-dimensional disk.}
\end{theorem}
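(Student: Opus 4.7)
The plan is to follow the classical strategy from Milnor's book: use the Morse Lemma to put $f$ into normal form near $p$, then modify $f$ in a controlled way to ``push'' the critical value below $c-\epsilon$, at the cost of carving out a region whose complement is an explicit handle. First, I would apply the Morse Lemma at $p$ to obtain a chart $(x_1,\ldots,x_d)$ on a neighborhood $U \subset \Mspace$ of $p$ in which
\[
f = c - (x_1^2 + \cdots + x_\lambda^2) + (x_{\lambda+1}^2 + \cdots + x_d^2) = c - u + v,
\]
where $u \colonequals x_1^2+\cdots+x_\lambda^2$ and $v \colonequals x_{\lambda+1}^2+\cdots+x_d^2$. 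After possibly shrinking $\epsilon$ so that the region $\{u+2v \leq 2\epsilon\}$ sits inside $U$, the candidate handle is $H \colonequals \{u \leq \epsilon,\; v \leq \epsilon\}$, which after corner-smoothing is diffeomorphic to $\Dspace^\lambda \times \Dspace^{d-\lambda}$ with core disk $\{v=0\}$.

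Next I would introduce the auxiliary smooth function $F \colonequals f - \mu(u+2v)$, where $\mu \colon [0,\infty) \to [0,\infty)$ is a bump satisfying $\mu(0) > \epsilon$, $-1 < \mu'(s) \leq 0$ for all $s$, and $\mu(s) = 0$ for $s \geq 2\epsilon$. Direct computation of $\partial F/\partial x_i$ shows that $F$ has its unique critical point at $p$, with $F(p) = c - \mu(0) < c - \epsilon$, and that $F \equiv f$ outside $\{u+2v < 2\epsilon\}$, which is compactly contained in $U$. A short estimate using $-u+v \leq \tfrac{1}{2}(u+2v)$ on the modification region then gives the key identity $F^{-1}(-\infty, c+\epsilon] = \Mspace_{c+\epsilon}$. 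In particular $F$ has no critical values in $[c-\epsilon, c+\epsilon]$.

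With these properties established, Theorem~\ref{theorem:CMT-A} applied to $F$ on $[c-\epsilon, c+\epsilon]$ produces a diffeomorphism $\Mspace_{c+\epsilon} = F^{-1}(-\infty, c+\epsilon] \cong F^{-1}(-\infty, c-\epsilon]$, reducing the problem to identifying the latter sublevel set with $\Mspace_{c-\epsilon} \cup H$. I anticipate that the main obstacle lies precisely in this last identification: outside $U$ the two sets coincide, but inside $U$ one must analyze the ``new'' region $R \colonequals F^{-1}(-\infty, c-\epsilon] \setminus \Mspace_{c-\epsilon}$ in Morse coordinates, verify that it deformation retracts through the flow of $\nabla F$ onto the core disk $\{v=0,\; u \leq \epsilon\} \cong \Dspace^\lambda$, and exhibit an explicit diffeomorphism of $R$ with $\Dspace^\lambda \times \Dspace^{d-\lambda}$ by rescaling in the $u$- and $v$-directions so that the attaching sphere $\partial \Dspace^\lambda \times \Dspace^{d-\lambda}$ lands correctly in $\partial \Mspace_{c-\epsilon}$. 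A final application of the standard corner-smoothing lemma then upgrades the piecewise-smooth gluing $\Mspace_{c-\epsilon} \cup H$ to a genuine smooth manifold, completing the proof.
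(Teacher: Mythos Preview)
The paper does not prove this theorem at all: it is stated as classical background and attributed directly to \cite[Theorem 3.2]{Milnor1963} and \cite[page 77]{Matsumoto1997}, with no argument given. So there is no ``paper's own proof'' to compare against.

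Your proposal is essentially a faithful outline of Milnor's original argument, and as such it is correct in spirit. One caution: Milnor's Theorem~3.2 as stated concludes only a \emph{homotopy equivalence} of $\Mspace_{c+\epsilon}$ with $\Mspace_{c-\epsilon}\cup e^\lambda$ (a $\lambda$-cell attached), via the deformation retract you describe. The stronger \emph{diffeomorphism} with a $\lambda$-handle $\Dspace^\lambda\times\Dspace^{d-\lambda}$ attached---which is what the theorem as quoted here asserts---requires the additional rescaling/straightening step and corner smoothing you mention at the end, and is closer to the treatment in Matsumoto or Palais. You have flagged exactly this as the anticipated obstacle, so you are aware of it; just be sure in a full write-up not to conflate the retraction-to-core-disk argument (which gives homotopy type) with the explicit product-structure diffeomorphism (which gives the handle), since they are separate steps with different conclusions.
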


Summarizing, the sublevel sets of a Morse function change precisely when passing through a critical value. Moreover, this change is completely characterized topologically by the index of the critical point.

\subsubsection{Gradient Vector Fields}
A \emph{vector field} on a manifold is a smooth section of the tangent bundle. Equivalently, it is a smooth function $v: \Mspace \to \TM$, such that $v(x) \in \TM_{x}$, where $\TM_x$ is the tangent space of $\Mspace$ at $x$. Given a smooth function $f\colon \Mspace \to \Rspace$, the \emph{gradient} of $f$ (with respect to the metric $g_\Mspace$) is a vector field $\grad{f}\colon \Mspace \to \TM$ consisting of vectors in the direction of the steepest ascent of $f$, and is formally dual to the differential $df$. The singularities of $\grad{f}$ coincide with the critical points of $f$, and hence are isolated and finite.

\subsubsection{Morse--Smale Functions}
Let $\phi_t$ denote the flow generated by $\grad{f}$. For a critical point $p \in \Mspace$ of $f$, the \emph{stable manifold} of $p$ is $S(p) = \{x \in \Mspace \mid \lim_{t \to \infty} \phi_t(x) = p\} \,. $ The \emph{unstable manifold} of $p$ is $U(p) = \{x \in \Mspace \mid \lim_{t \to -\infty} \phi_t(x) = p\} \,. $ A \emph{Morse--Smale function} is a Morse function whose stable and unstable manifolds intersect transversally. The Morse--Smale condition is dependent on the metric $g_{\Mspace}$, but we omit this from the terminology.

An \emph{integral curve} of $f$ passing through a regular point $x$ is $\gamma = \gamma_x \colon \Rspace \to \Mspace$ defined by $\gamma(t) = \phi_t(x)$. A \emph{flow line} is an equivalence class of integral curves of $f$, where $\gamma\sim \gamma'$ if $\gamma(t) = \gamma'(s+t)$ for some $s$ and all $t\in \Rspace$. Therefore the unstable and stable manifolds of a critical point are the unions of all flow lines which begin and terminate, respectively, at that critical point. The Morse--Smale condition imposes restrictions on flow lines. For example, flow lines of a Morse--Smale gradient cannot connect critical points of the same index.

For a given Morse--Smale function $f$, by intersecting the stable and unstable
manifolds, we obtain the \emph{Morse--Smale cells} as the connected components
of the set $U(p) \cap S(q)$ for all critical points $p, q \in
\Mspace$~\cite{EdelsbrunnerHarerZomorodian2003}.  The \emph{Morse--Smale
complex} is the collection of Morse--Smale
cells~\cite{EdelsbrunnerHarerZomorodian2003}.\footnote{The Morse--Smale complex
described here is treated as a combinatorial structure, not to be confused with
Morse--Smale--Witten chain complex~\cite[Chapter 7]{BanyagaHurtubise2013}.} We
define the \emph{Morse--Smale graph} of $f$ to be the 1-skeleton of the
Morse--Smale complex, that is, the union of the zero-dimensional (vertices) and
one-dimensional (edges) cells of the Morse--Smale complex of $f$.

\begin{remark}
The Morse--Smale graph is also referred to as the \emph{topological skeleton} in visualization~\cite{HelmanHesselink1989}, consisting of critical points and streamlines that connect them which divide the domain of $\grad{f}$ into areas of different flow behavior (referred to as separatrices).  A similar invariant is the \emph{distinguished graph} of a gradient-like flow~\cite{Peixoto1973}, allowing for the possibility of maximum-minimum connections, which never occur in the Morse--Smale graph~\cite[Quadrangle Lemma]{EdelsbrunnerHarerZomorodian2003}.
\end{remark}

Let $V_f$ denote the vertices of the Morse--Smale graph. We define the \emph{decorated Morse--Smale graph} of a Morse function $f\colon \Mspace \to \Rspace$ to be the Morse--Smale graph of~$f$ equipped with a vertex weighting given by restricting $f$ to the vertices: $f|_{V_f}\colon V_f \to \Rspace$. Figure~\ref{fig:msc-example} is an example of a decorated Morse--Smale graph with the vertex weighting marked next to the critical points. We begin with a Morse function on the sphere $f\colon \Sspace^2 \to \Mspace$ with three maxima, three saddles and two minima.  We imagine cutting open and replacing the global minimum with an elastic band, and mapping the sphere to a disk for a clearer visualization.

\begin{figure}[h]\centering
\begin{tikzpicture}
\draw[blue,fill=cellfill] (0,0) circle (2);
\foreach \x\y\nam in {-1.5/0/a, -.2/0/b, 1.5/0/c}{
  \coordinate (\nam) at (\x,\y);
}
\coordinate (ab) at ($(a)!.5!(b)$);
\coordinate (bc) at ($(b)!.5!(c)$);
\coordinate (bct) at ($(bc)+(90:.8)$);
\coordinate (bcb) at ($(bc)+(270:.8)$);
\draw[densely dashed] (a)--(b);
\draw[densely dashed] (b) to [out=90,in=180] (bct);
\draw[densely dashed] (b) to [out=270,in=180] (bcb);
\draw[densely dashed] (bct) to [out=0,in=90] (c);
\draw[densely dashed] (bcb) to [out=0,in=270] (c);
\draw (ab) to [out=90,in=315] ($(0,0)+(135:2)$);
\draw (ab) to [out=270,in=45] ($(0,0)+(225:2)$);
\draw (bct)--(bcb);
\draw (bct) to [out=90,in=240] ($(0,0)+(60:2)$);
\draw (bcb) to [out=270,in=120] ($(0,0)+(-60:2)$);
\foreach \coord\typ in {a/\maxx, b/\maxx, c/\maxx, ab/\sadd, bc/\minn, bct/\sadd, bcb/\sadd}{
  \typ{\coord}
}
\node[color=blue] at ($(0,0)+(225:2.3)$) {1};
\node[anchor=north west,color=blue] at (bc.south west) {2};
\node[anchor=45,color=dgren,inner sep=5pt] at (bcb.south west) {3};
\node[anchor=315,color=dgren,inner sep=5pt] at (bct.north west) {4};
\node[anchor=120,color=dgren,inner sep=5pt] at (ab.south west) {5};
\node[anchor=west,color=red,inner sep=5pt] at (c.east) {6};
\node[anchor=300,color=red,inner sep=5pt] at (b.north east) {7};
\node[anchor=east,color=red,inner sep=5pt] at (a.west) {8};
\foreach \y in {1,...,6}{
  \coordinate (\y) at (4,2.3-\y*.7);
}
\maxx{1}
\sadd{2}
\minn{3}
\draw[densely dashed] (4)--+(180:.75);
\draw (5)--+(180:.75);
\draw[blue] (6)--+(180:.75);
\node[anchor=182] at (1) {\ maximum};
\node[anchor=182] at (2) {\ saddle};
\node[anchor=182] at (3) {\ minimum};
\node[anchor=181] at (4) {\ saddle-max connection};
\node[anchor=181] at (5) {\ saddle-min connection};
\node[anchor=west] at (6) {\ global minimum};
\end{tikzpicture}
\caption{A decorated Morse--Smale graph for a Morse function on the sphere. The boundary of this disk is identified to a point, which is the global minimum with weight 1.}
\label{fig:msc-example}
\end{figure}
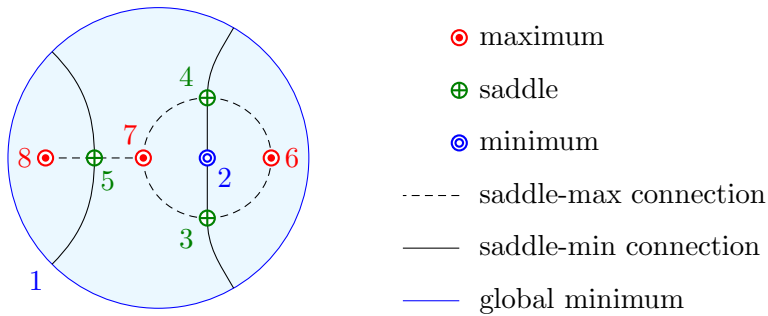

\subsubsection{Filtrations for Persistent Homology}

In this paper, we are mostly concerned with \emph{sublevel set} filtrations of
functions. That is, we are interested in the topological and algebraic
properties of sets $f^{-1}(-\infty,t]$ for $t\in \Rspace$, and inclusion maps
among them. Section \ref{sec:results-posets} is an exception, where \emph{level
set} and \emph{interlevel set} filtrations are considered, that is, we use sets
of the sort $f^{-1}(t)$ and $f^{-1}[t-\epsilon,t+\epsilon]$ for $t\in \Rspace$
and $\epsilon>0$. We refer the reader to broader surveys such
as~\cite{EdelsbrunnerHarer2010,BendichEdelsbrunnerMorozov2013} for more on the
different ways to approach persistence.

\subsection{Equivalences Among Morse Functions}
\label{sec:morsefunc-equivalence}

We first review several equivalence relations between Morse functions that have been studied in the literature, including \emph{geometric equivalence}, \emph{topological equivalence}, and \emph{homological equivalence}. We then introduce new notions of equivalence relations between Morse--Smale functions that are essential to our research objectives, namely, \emph{graph equivalence}, \emph{height equivalence}, and \emph{poset equivalence}.

\subsubsection{Orientation Preservation and Level Set Preservation}
Let $\Mspace$ and $\Nspace$ be smooth, oriented manifolds (of dimension $n$). A diffeomorphism $h\colon \Mspace \to \Nspace$ is \emph{orientation-preserving} provided that $dh_p$ preserves the orientation at each point $p$ of $\Mspace$, that is, the linear transformation $dh_p$ has positive determinant.

Given two Morse functions on manifolds, $f\colon \Mspace \to \Rspace$ and $g\colon \Nspace \to \Rspace$, a homeomorphism $h\colon \Mspace \to \Nspace$ is \emph{level set preserving} if $h(f^{-1}(a)) = g^{-1}(a)$ for any $a \in \Rspace$. Equivalently, $h\colon \Mspace \to \Nspace$ is level-set preserving if and only if the diagram
\begin{equation}
\begin{tikzcd}
 \Mspace \arrow[rr, rightarrow, "h"] \arrow[dr, rightarrow, "f"'] & & \Nspace \arrow[dl, rightarrow, "g"] \\
& \Rspace &
\end{tikzcd}
\end{equation}
commutes.

\subsubsection{Geometric, Topological and Homological Equivalences}
As before, let $f\colon \Mspace \to \Rspace$ be Morse and  $\Mspace_{t}^f \colonequals f^{-1}(\infty, t]$ its sublevel sets. For $n_f$ the number of critical points of $f$, let $a_0 < \cdots < a_{n_f}$ be a sequence of regular values of $f$ such that each interval $(a_i, a_{i+1})$ contains exactly one critical value of $f$ (for $0 \leq i \leq n_f-1$), called a \emph{slicing}~\cite{Nicolaescu2008} of $f$. Two Morse functions $f, g\colon \Mspace \to \Rspace$ are \emph{geometrically equivalent} if there exist orientation-preserving diffeomorphisms $r\colon \Mspace \to \Mspace$ and $l\colon \Rspace \to \Rspace$ such that $g = l \circ f \circ r^{-1}$, or equivalently, if the diagram
\begin{equation}\label{eqn:geom_equiv}
\begin{tikzcd}
 \Mspace
 \arrow[r,rightarrow,"r"]
 \arrow[d,"f"]
 &
 \Mspace
 \arrow[d,"g"]
 \\
 \Rspace
 \arrow[r,rightarrow,"l"]
 &
 \Rspace
\end{tikzcd}\end{equation}
commutes. The Morse functions $f$ and $g$ are \emph{topologically equivalent} if they have the same number of critical values $n_f = n_g$ and there \edits{exists} a slicing $a_0 < \cdots < a_{n_f}$ of $f$ and a slicing $b_0 < \cdots < b_{n_g}$ of $g$ together with orientation-preserving diffeomorphisms $\phi_i\colon \Mspace_{a_i}^f \to \Mspace_{b_i}^g$ between sublevel sets. They are \emph{(mod $p$) homologically equivalent} if they have the same number of critical points and there exists a slicing of $f$ and a slicing of $g$ such that each of the sublevel sets  $\Mspace_{a_i}^f$ and~$\Mspace_{b_i}^g$ have the same (mod $p$) Betti numbers. Note that geometric equivalence implies topological equivalence, which in turn, implies homological equivalence; see~\cite{Nicolaescu2008} for details.

\subsubsection{Graph Equivalence}
Two Morse--Smale functions $f,g$ are \emph{graph equivalent} if there is a graph isomorphism $\varphi\colon V_f \to V_g$ with $f|_{V_f} = g|_{V_g}\circ \varphi$. Graph equivalence is strictly stronger than topological equivalence and level-set equivalence, as described in Figure~\ref{fig:graph-non-equivalence}.

\begin{figure}[h]\centering
\newcommand\vfactor{2.5} 
\begin{tikzpicture}[scale=1.2]
\draw[blue,fill=cellfill] (0,0) circle (1.5);
\foreach \r\n in {90/a, 210/b, 330/c}{
  \coordinate (\n) at ($(0,0)+(\r:1)$);
  \draw (0,0)--++(\r+60:1.5);
}
\draw[densely dashed] (a)--(b)--(c)--(a);
\foreach \n\m\K in {a/b/ab, b/c/bc, c/a/ca}{
  \coordinate (\K) at ($(\n)!.5!(\m)$);
}
\foreach \coord\typ in {a/\maxx, b/\maxx, c/\maxx, ab/\sadd, bc/\sadd, ca/\sadd, {0,0}/\minn}{
  \typ{\coord}
}
\node[color=blue] at ($(0,0)+(225:1.8)$) {1};
\node[anchor=north west,color=blue] at (0,0) {2};
\node[anchor=285,color=dgren,inner sep=7pt] at (ab.north) {3};
\node[anchor=255,color=dgren,inner sep=7pt] at (ca.north) {4};
\node[anchor=north west,color=dgren] at (bc.south east) {5};
\node[anchor=east,color=red,inner sep=5pt] at (a.west) {6};
\node[anchor=north,color=red,inner sep=7pt] at (c.south) {7};
\node[anchor=north,color=red,inner sep=7pt] at (b.south) {8};
\begin{scope}[shift={(4,0)}]
\draw[blue,fill=cellfill] (0,0) circle (1.5);
\foreach \x\y\nam in {-1/0/a, 0/0/b, 1/0/c}{
  \coordinate (\nam) at (\x,\y);
}
\coordinate (ab) at ($(a)!.5!(b)$);
\coordinate (bc) at ($(b)!.5!(c)$);
\coordinate (bct) at ($(bc)+(90:.5)$);
\coordinate (bcb) at ($(bc)+(270:.5)$);
\draw[densely dashed] (a)--(b);
\draw[densely dashed] (b) to [out=90,in=180] (bct);
\draw[densely dashed] (b) to [out=270,in=180] (bcb);
\draw[densely dashed] (bct) to [out=0,in=90] (c);
\draw[densely dashed] (bcb) to [out=0,in=270] (c);
\draw (ab) to [out=90,in=315] ($(0,0)+(135:1.5)$);
\draw (ab) to [out=270,in=45] ($(0,0)+(225:1.5)$);
\draw (bct)--(bcb);
\draw (bct) to [out=90,in=240] ($(0,0)+(60:1.5)$);
\draw (bcb) to [out=270,in=120] ($(0,0)+(-60:1.5)$);
\foreach \coord\typ in {a/\maxx, b/\maxx, c/\maxx, ab/\sadd, bc/\minn, bct/\sadd, bcb/\sadd}{
  \typ{\coord}
}
\node[color=blue] at ($(0,0)+(225:1.8)$) {1};
\node[anchor=north west,color=blue] at (bc.south west) {2};
\node[anchor=45,color=dgren,inner sep=5pt] at (bcb.south west) {3};
\node[anchor=315,color=dgren,inner sep=5pt] at (bct.north west) {4};
\node[anchor=120,color=dgren,inner sep=5pt] at (ab.south west) {5};
\node[anchor=west,color=red,inner sep=5pt] at (c.east) {6};
\node[anchor=300,color=red,inner sep=5pt] at (b.north east) {7};
\node[anchor=east,color=red,inner sep=5pt] at (a.west) {8};
\end{scope}
\begin{scope}[shift={(9,-1.5)}]
\begin{scope}[shift={(-.4,0)}]
\draw[|->] (2.5,0)--(2.5,9/\vfactor);
\node[scale=.8] (r) at (2.5,-.5) {$\Rspace$\vphantom{$H_0$}};
\foreach \y\col in {1/blue, 2/blue, 3/dgren, 4/dgren, 5/dgren, 6/red, 7/red, 8/red}{
  \draw[thick,draw=\col] (2.4,\y/\vfactor) to (2.6,\y/\vfactor) node[scale=.8,fill=white,text=\col,right=0pt] {$\y$};
  \draw[opacity=.5,dotted] (-2.2,\y/\vfactor)--(2.2,\y/\vfactor);
}
\end{scope}
\begin{scope}[shift={(-1.1,0)}]
\node[scale=.8] (rg) at (-1,-.3) {Reeb\vphantom{Ag}};
\node[scale=.8,yshift=5pt,anchor=north] at (rg.south) {graph};
\draw[line width=1pt] (-1.2,1/\vfactor) .. controls +(90:.2) and +(180:.2) .. (-1,3/\vfactor);
\draw[line width=1pt] (-.8,2/\vfactor) .. controls +(90:.2) and +(0:.2) .. (-1,3/\vfactor);
\draw[line width=1pt] (-1,3/\vfactor) -- (-1,4/\vfactor);
\draw[line width=1pt] (-1,4/\vfactor) .. controls +(0:.2) and +(270:.2) .. (-.8,6/\vfactor);
\draw[line width=1pt] (-1,4/\vfactor) .. controls +(180:.2) and +(270:.2) .. (-1.2,5/\vfactor);
\draw[line width=1pt] (-1.2,5/\vfactor) .. controls +(180:.2) and +(270:.2) .. (-1.4,7/\vfactor);
\draw[line width=1pt] (-1.2,5/\vfactor) .. controls +(0:.2) and +(270:.7) .. (-1,8/\vfactor);
\end{scope}
\node[scale=.8] (mt) at (-1,-.3) {merge\vphantom{Ag}};
\node[scale=.8,yshift=5pt,anchor=north] at (mt.south) {tree};
\draw[line width=1pt] (-1.2,1/\vfactor) .. controls +(90:.2) and +(180:.2) .. (-1,3/\vfactor);
\draw[line width=1pt] (-.8,2/\vfactor) .. controls +(90:.2) and +(0:.2) .. (-1,3/\vfactor);
\draw[line width=1pt] (-1,3/\vfactor) -- (-1,8/\vfactor);
\begin{scope}[shift={(-.5,0)}]
\node[scale=.8] (h0) at (.65,-.3) {$H_0$};
\node[scale=.8,yshift=5pt,anchor=north] at (h0.south) {barcode};
\draw[barcinf] (.5,1/\vfactor)--(.5,8/\vfactor);
\draw[barco] (.8,2/\vfactor)--(.8,3/\vfactor);
\end{scope}
\begin{scope}[shift={(-1,0)}]
\node[scale=.8](h1) at (2.3,-.3) {$H_1$};
\node[scale=.8,yshift=5pt,anchor=north] at (h1.south) {barcode};
\draw[barcc] (2,1/\vfactor)--(2,8/\vfactor);
\draw[barco] (2.3,4/\vfactor)--(2.3,6/\vfactor);
\draw[barco] (2.6,5/\vfactor)--(2.6,7/\vfactor);
\end{scope}
\end{scope}
\end{tikzpicture}
\caption{An example of two Morse functions $\Sspace^2\to \Rspace$ that have the same barcode, Reeb graph, and merge tree. These functions are geometrically equivalent by~\cite[Theorem 3.3]{Nicolaescu2008}, but are not graph equivalent.}
\label{fig:graph-non-equivalence}
\end{figure}

\subsubsection{Height Equivalence}
Let $\iota, \iota'\colon \Sspace^2 \to \Rspace^3$ be smooth embeddings of a sphere to $\Rspace^3$. Let~$\pi\colon \Rspace^3 \to \Rspace$ be a projection onto the unit normal vector $[0,0,1]^T$.  Let $f, g\colon \Sspace^2 \to \Rspace$ be two Morse functions that factor through the two embeddings $\iota$ and $\iota'$, respectively; that is,~$f = \pi \circ \iota$ and $g = \pi \circ \iota'$. Then $f$ and $g$ are called \emph{height equivalent} whenever there is a level set preserving homeomorphism~$\psi\colon \Rspace^3 \to \Rspace^3$ with $\iota'=\psi \circ \iota$. Equivalently,  $f$ and $g$ are height equivalent if the diagram
\begin{equation}
\label{eqn:height-equiv-diag}
\begin{tikzcd}
 & \Sspace^2 \arrow[dl,hookrightarrow,"\iota"'] \arrow[dr, hookrightarrow, "\iota' "] & \\
 \Rspace^3 \arrow[rr, rightarrow, "\psi"] \arrow[dr, twoheadrightarrow, "\pi"'] & & \Rspace^3 \arrow[dl, twoheadrightarrow, "\pi"] \\
& \R &
\end{tikzcd}
\end{equation}
commutes. Two height equivalent Morse functions are necessarily equal as functions $\Sspace^2 \to \R$, and thus will have the same critical values and (sub)level set persistence barcodes.

\subsubsection{Poset Equivalence}
We remind the reader that an \emph{isomorphism} from a poset  $(F,\leq _{F})$ to a poset $(G,\leq _{G})$ is a bijective function $\varphi\colon F\to G$ of sets with the property that, for every $x$ and $y$ in~$F$, $x\leq _{F}y$ if and only if $\varphi(x)\leq _{G}\varphi(y)$.

\begin{definition}[Poset Equivalence]
\label{def:poset-equivalence}
Two Morse functions $f, g\colon \Sspace^2 \to \Rspace$ factoring through embeddings $\iota$, $\iota'$, respectively, are \textit{poset equivalent} if they are height equivalent, and if
\begin{enumerate}
\item there exists a common slicing $a_0 < \cdots < a_{n}$ of $f$ and $g$ such that, for every $i$, the sets $F_i \colonequals \pi_0\left(\pi^{-1}(a_i) - \iota \circ f^{-1}(a_i)\right)$ and $G_i \colonequals \pi_0\left(\pi^{-1}(a_i) - \iota' \circ g^{-1}(a_i)\right)$ have the structure of a poset, and
\item the map $\pi_0 \circ \psi_i \colon F_i \to G_i $ induced by $\psi_i \colon \left(\pi^{-1}(a_i) - \iota \circ f^{-1}(a_i)\right) \to \left(\pi^{-1}(a_i) - \iota' \circ g^{-1}(a_i)\right)$ is an isomorphism of posets, where $\psi_i$ is a restriction of $\psi$ to the planes $\pi^{-1}(a_i)$.
\end{enumerate}
\end{definition}

This equivalence is necessary for understanding the preimage of the persistence map in Section~\ref{sec:results-posets}. Both Figure~\ref{fig:nesting}(a) and Figure~\ref{fig:nesting}(c) give two examples of this poset structure, and Figure~\ref{fig:shotglassworm} describes these examples in the context of their Morse functions.

\subsubsection{Comparison of Equivalence Relations}
We conclude this section with a comparison of the equivalence relations introduced thus far, for Morse functions $f\colon \Sspace^2\to \Rspace$.
\begin{lemma}
The following are strict implications among the equivalence relations.
\begin{center}
\begin{tikzpicture}[yscale=1.5,stronger/.style={double,line width=.8pt,double distance=2pt,-Implies}]
\node (poset) at (0,0) {poset};
\node[anchor=west] (height) at ($(poset.east)+(0:1)$) {height};
\node[anchor=west] (geo) at ($(height.east)+(0:1)$) {geometric};
\node[anchor=west] (top) at ($(geo.east)+(0:1)$) {topological};
\node[anchor=west] (hom) at ($(top.east)+(0:1)$) {homological};
\node (gr) at ($(geo)+(270:1)$) {graph};
\foreach \x\y in {poset/height, height/geo, geo/top, top/hom}{
  \draw[stronger] (\x)--(\y);
}
\draw[stronger] (geo) to node[right=-4.5pt,rotate=-45,pos=.25,scale=1.2] {$|$} (gr);
\end{tikzpicture}
\end{center}
\end{lemma}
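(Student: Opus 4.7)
My strategy is to verify each forward implication directly from the definitions, and then to exhibit, for each arrow, a pair of functions showing that the reverse fails, together with one pair witnessing that geometric equivalence does not imply graph equivalence.

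The forward implications are mostly bookkeeping. Poset implies height by Definition~\ref{def:poset-equivalence}. For height implies geometric, I would invoke the observation made just after diagram~\eqref{eqn:height-equiv-diag} that level-set preservation of $\psi$ with respect to $\pi$ forces $\pi \circ \psi = \pi$, so $g = \pi \circ \iota' = \pi \circ \psi \circ \iota = \pi \circ \iota = f$; consequently diagram~\eqref{eqn:geom_equiv} commutes with $r$ and $l$ both taken to be identity maps. For geometric implies topological, given $g = l \circ f \circ r^{-1}$ with $l$ an orientation-preserving (hence monotone increasing) diffeomorphism of $\Rspace$, any slicing $a_0 < \cdots < a_{n_f}$ of $f$ transports under $l$ to a slicing of $g$, and $r$ restricts to diffeomorphisms $\Mspace_{a_i}^f \to \Mspace_{l(a_i)}^g$ since $f(x) \leq a_i$ iff $g(r(x)) = l(f(x)) \leq l(a_i)$. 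Topological implies homological is then immediate, because diffeomorphisms of sublevel sets preserve (mod $p$) Betti numbers.

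For strictness I plan to supply one counterexample per step. Geometric does not imply graph equivalence by Figure~\ref{fig:graph-non-equivalence}, whose two depicted functions are geometrically equivalent (Nicolaescu's criterion) yet have non-isomorphic Morse--Smale graphs. For geometric not implying height, any value-translate $g = f + 1$ is geometrically equivalent to $f$ via $r = \mathrm{id}$ and $l(x) = x + 1$, yet $f \neq g$ as functions on $\Sspace^2$ rules out height equivalence. For topological not implying geometric, I would adapt the orientation-sensitive constructions in~\cite{Nicolaescu2008} to produce slice-wise diffeomorphisms $\phi_i$ that cannot be stitched into a single global diffeomorphism $r$ making diagram~\eqref{eqn:geom_equiv} commute. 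For homological not implying topological, I would arrange a sublevel set with Betti numbers $\beta_0 = 2$, $\beta_1 = 2$ realised as two disjoint annuli for one function and as the disjoint union of a disk and a pair of pants for the other; these share Betti numbers but are not homeomorphic.

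The main obstacle will be strictness of poset over height. There one must construct two height-equivalent embeddings $\iota, \iota'\colon \Sspace^2 \to \Rspace^3$ for which \emph{no} level-set preserving ambient homeomorphism $\psi$ induces an isomorphism on the slice-complement posets $F_i$ and $G_i$ of Definition~\ref{def:poset-equivalence}. This demands a poset-level invariant distinguishing the two embeddings, rather than merely displaying one candidate $\psi$ that fails. I expect the zigzag structure of Corollary~\ref{cor:zigzag}, together with the explicit embeddings of Figures~\ref{fig:nesting} and~\ref{fig:shotglassworm}, will furnish the required distinguishing invariant and complete the proof.
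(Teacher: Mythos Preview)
Your proposal is essentially the paper's proof, with the same forward arguments (identity maps for height$\Rightarrow$geometric, Nicolaescu for the last two) and the same counterexamples ($f$ versus a translate, Figure~\ref{fig:graph-non-equivalence}, the reference to~\cite{Nicolaescu2008}). Two minor divergences are worth noting. First, you overestimate the difficulty of the poset/height strictness: in Figure~\ref{fig:shotglassworm} the two slice posets at $a_2$ are non-isomorphic as abstract posets (one is a chain $\bullet\to\bullet\to\bullet$, the other has two minimal elements below a common top), so \emph{no} $\psi$ can induce a poset isomorphism; you do not need the zigzag machinery of Corollary~\ref{cor:zigzag} here. Second, for homological $\not\Rightarrow$ topological the paper simply invokes orientation (topological equivalence demands orientation-preserving diffeomorphisms, homological equivalence does not), which is cleaner than your annuli-versus-pants construction; your construction is plausible but you would need to check that the Betti numbers match at \emph{every} slice, not just one.
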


\begin{proof}
In the top line, the first implication follows directly by definition, and is strict by the example of Figure~\ref{fig:shotglassworm}. The second implication follows from taking $r = \id_{\Sspace^2}$ and $l = \id_{\Rspace}$ in Diagram~\eqref{eqn:geom_equiv}. Given any Morse function $f \colon \Sspace^2 \to \Rspace$, and sufficiently small $\epsilon$, the functions $f$ and $f+\epsilon$ are geometrically equivalent, but not height equivalent (because they  are distinct). The third and fourth implications are shown in~\cite{Nicolaescu2008}, and Figure 5 in the same paper shows that the third implication is strict. As topological equivalence considers orientation while homological equivalence does not, examples abound of why the fourth implication is strict. Figure~\ref{fig:graph-non-equivalence} gives an example of two Morse functions that are geometrically equivalent but not graph equivalent. \qed
\end{proof}

\edits{Both the cell decomposition in the decorated Morse--Smale graph and recent work \cite[Theorem 1]{morsebott} suggest that graph equivalence implies geometric equivalence. However, this remains an open problem.}

\begin{remark}
If two functions $f,g \colon \Sspace^2 \to \Rspace$ are homologically equivalent, this does not imply they have the same barcode. However, if we instead consider $\varepsilon$-interleavings~\cite{ChazalCohen-SteinerGlisse2009} of barcodes (thought of as persistence modules), then homological equivalence does imply an $\varepsilon$-interleaving.
\end{remark}

\section{Fundamental Moves}
\label{sec:results-abcd}

We focus on understanding how cells (generically as quadrangles) of a Morse--Smale complex fit together on a surface and how they change when a pair of critical points is added or removed. We only consider Morse--Smale complexes that arise from a Morse-Smale function on a sphere, and we refer to the changes as fundamental moves, or \emph{moves} in short. Our first main result is to define moves on the Morse--Smale complex, with the goal of describing all the possible ways to create a new Morse--Smale function.

By the Quadrangle Lemma \cite{EdelsbrunnerHarerZomorodian2003}, every face (cell) of a decorated Morse--Smale graph has four edges, counting an edge twice if the face is on both sides of the edge. This allows us to describe changes to the graph as a composition of moves. 

The gradient of a Morse--Smale function gives rise to a Morse--Smale vector field, therefore our approach equivalently describes changes to a Morse--Smale vector field due to the moves, with the changes limited to a particular region of cells for each move. Everything in the vector field outside of this region stays the same between moves. \edits{As we only investigate Morse--Smale functions on a manifold, by definition, all saddles are simple}; that is, every saddle has degree four, and the endpoints of the four adjacent edges alternate between maxima and minima. All higher-order saddles can be unfolded into simple saddles. As in Figure \ref{fig:msc-example}, a saddle-maximum connection is indicated by a solid line, and a saddle-minimum connection is marked by a dashed line. Maxima and minima may have arbitrary degrees. 

We now describe face moves, edge moves, and vertex moves; which operate on faces,  edges, and vertices, respectively. All of the moves add or remove two cells to the quadrangulation, or equivalently, they add or remove one saddle-maximum or saddle-minimum pair.  

\begin{definition}[Face Moves]\label{def:facemove}
A \emph{face move} is \edits{addition (cancellation)} of a pair of critical points in the interior of a cell.  
\[\begin{tikzpicture}
\begin{scope}
\foreach \x\y\name in {0/2/a, 2/2/b, 0/0/c, 2/0/d}{
  \coordinate (\name) at (\x,\y);
}
\fill[rounded corners=10pt,cellfill] ($(c)+(225:\spacer)$) rectangle ($(b)+(45:\spacer)$);
\clip[rounded corners=10pt] ($(c)+(225:\spacer)$) rectangle ($(b)+(45:\spacer)$);
\draw (b)--(d)--(c);
\draw[densely dashed] (c)--(a)--(b);
\draw (b)--++(90:\spacer) (c)--++(180:\spacer);
\draw[densely dashed] (b)--++(0:\spacer) (c)--++(270:\spacer);
\draw[dotted] (a)--++(115:\spacer) (a)--++(135:\spacer) (a)--++(155:\spacer);
\draw[dotted] (d)--++(295:\spacer) (d)--++(315:\spacer) (d)--++(335:\spacer);
\foreach \name\type in {a/\maxx, b/\sadd, c/\sadd, d/\minn}{
  \type{\name}
}
\end{scope}
\node at (3,1) {$\leftrightarrow$};
\node[anchor=east] at (-.8,2) {face-max move};
\begin{scope}[shift={(4,0)}]
\foreach \x\y\name in {0/2/a, 2/2/b, 0/0/c, 2/0/d, .5/1.5/e, 1.5/.5/f}{
  \coordinate (\name) at (\x,\y);
}
\fill[rounded corners=10pt,cellfill] ($(c)+(225:\spacer)$) rectangle ($(b)+(45:\spacer)$);
\clip[rounded corners=10pt] ($(c)+(225:\spacer)$) rectangle ($(b)+(45:\spacer)$);
\draw (b)--(d)--(c);
\draw[densely dashed] (c)--(a)--(b) (a)--(f);
\draw (e) to [bend left=45] (d);
\draw (e) to [bend right=45] (d);
\draw (b)--++(90:\spacer) (c)--++(180:\spacer);
\draw[densely dashed] (b)--++(0:\spacer) (c)--++(270:\spacer);
\draw[dotted] (a)--++(115:\spacer) (a)--++(135:\spacer) (a)--++(155:\spacer);
\draw[dotted] (d)--++(295:\spacer) (d)--++(315:\spacer) (d)--++(335:\spacer);
\foreach \name\type in {a/\maxx, b/\sadd, c/\sadd, d/\minn, e/\sadd, f/\maxx}{
  \type{\name}
}
\end{scope}
\begin{scope}[shift={(0,-4)}]
\begin{scope}
\foreach \x\y\name in {0/2/a, 2/2/b, 0/0/c, 2/0/d}{
  \coordinate (\name) at (\x,\y);
}
\fill[rounded corners=10pt,cellfill] ($(c)+(225:\spacer)$) rectangle ($(b)+(45:\spacer)$);
\clip[rounded corners=10pt] ($(c)+(225:\spacer)$) rectangle ($(b)+(45:\spacer)$);
\draw (b)--(d)--(c);
\draw[densely dashed] (c)--(a)--(b);
\draw (b)--++(90:\spacer) (c)--++(180:\spacer);
\draw[densely dashed] (b)--++(0:\spacer) (c)--++(270:\spacer);
\draw[dotted] (a)--++(115:\spacer) (a)--++(135:\spacer) (a)--++(155:\spacer);
\draw[dotted] (d)--++(295:\spacer) (d)--++(315:\spacer) (d)--++(335:\spacer);
\foreach \name\type in {a/\maxx, b/\sadd, c/\sadd, d/\minn}{
  \type{\name}
}
\end{scope}
\node at (3,1) {$\leftrightarrow$};
\node[anchor=east] at (-.8,2) {face-min move};
\end{scope}
\begin{scope}[shift={(4,-4)}]
\foreach \x\y\name in {0/2/a, 2/2/b, 0/0/c, 2/0/d, .5/1.5/e, 1.5/.5/f}{
  \coordinate (\name) at (\x,\y);
}
\fill[rounded corners=10pt,cellfill] ($(c)+(225:\spacer)$) rectangle ($(b)+(45:\spacer)$);
\clip[rounded corners=10pt] ($(c)+(225:\spacer)$) rectangle ($(b)+(45:\spacer)$);
\draw (b)--(d)--(c) (e)--(d);
\draw[densely dashed] (c)--(a)--(b);
\draw[densely dashed] (a) to [bend left=45] (f);
\draw[densely dashed] (a) to [bend right=45] (f);
\draw (b)--++(90:\spacer) (c)--++(180:\spacer);
\draw[densely dashed] (b)--++(0:\spacer) (c)--++(270:\spacer);
\draw[dotted] (a)--++(115:\spacer) (a)--++(135:\spacer) (a)--++(155:\spacer);
\draw[dotted] (d)--++(295:\spacer) (d)--++(315:\spacer) (d)--++(335:\spacer);
\foreach \name\type in {a/\maxx, b/\sadd, c/\sadd, d/\minn, e/\minn, f/\sadd}{
  \type{\name}
}
\end{scope}
\end{tikzpicture}\]
\end{definition}

\begin{definition}[Edge Moves]\label{def:edgemove}
An \emph{edge move} is \edits{addition (cancellation)} of a pair of critical points on the edge of a cell. 
\[\begin{tikzpicture}
\begin{scope}
\foreach \x\y\name in {0/2/a, 2/2/b, 4/2/c, 0/0/d, 2/0/e, 4/0/f}{
  \coordinate (\name) at (\x,\y);
}
\fill[rounded corners=10pt,cellfill] ($(d)+(225:\spacer)$) rectangle ($(c)+(45:\spacer)$);
\clip[rounded corners=10pt] ($(d)+(225:\spacer)$) rectangle ($(c)+(45:\spacer)$);
\draw (a)--(d)--(f)--(c);
\draw[densely dashed] (a)--(c) (b)--(e);
\draw[densely dashed] (a)--++(180:\spacer) (c)--++(0:\spacer) (e)--++(270:\spacer);
\draw (a)--++(90:\spacer) (c)--++(90:\spacer);
\draw[dotted] (b)--++(70:\spacer) (b)--++(90:\spacer) (b)--++(110:\spacer);
\draw[dotted] (d)--++(205:\spacer) (d)--++(225:\spacer) (d)--++(245:\spacer);
\draw[dotted] (f)--++(295:\spacer) (f)--++(315:\spacer) (f)--++(335:\spacer);
\foreach \name\type in {a/\sadd, b/\maxx, c/\sadd, d/\minn, e/\sadd, f/\minn}{
  \type{\name}
}
\end{scope}
\node at (5,1) {$\leftrightarrow$};
\node[anchor=west] at (0,2.8) {edge-max move};
\begin{scope}[shift={(6,0)}]
\foreach \x\y\name in {0/2/a, 2/2/b, 4/2/c, 0/0/d, 2/0/e, 4/0/f, 2/1.33/top, 2/.66/bottom}{
  \coordinate (\name) at (\x,\y);
}
\fill[rounded corners=10pt,cellfill] ($(d)+(225:\spacer)$) rectangle ($(c)+(45:\spacer)$);
\clip[rounded corners=10pt] ($(d)+(225:\spacer)$) rectangle ($(c)+(45:\spacer)$);
\draw (a)--(d)--(f)--(c) (d)--(top)--(f);
\draw[densely dashed] (a)--(c) (b)--(e);
\draw[densely dashed] (a)--++(180:\spacer) (c)--++(0:\spacer) (e)--++(270:\spacer);
\draw (a)--++(90:\spacer) (c)--++(90:\spacer);
\draw[dotted] (b)--++(70:\spacer) (b)--++(90:\spacer) (b)--++(110:\spacer);
\draw[dotted] (d)--++(205:\spacer) (d)--++(225:\spacer) (d)--++(245:\spacer);
\draw[dotted] (f)--++(295:\spacer) (f)--++(315:\spacer) (f)--++(335:\spacer);
\foreach \name\type in {a/\sadd, b/\maxx, c/\sadd, d/\minn, e/\sadd, f/\minn, top/\sadd, bottom/\maxx}{
  \type{\name}
}
\end{scope}
\begin{scope}[shift={(0,-4)}]
\begin{scope}
\foreach \x\y\name in {0/2/a, 2/2/b, 4/2/c, 0/0/d, 2/0/e, 4/0/f}{
  \coordinate (\name) at (\x,\y);
}
\fill[rounded corners=10pt,cellfill] ($(d)+(225:\spacer)$) rectangle ($(c)+(45:\spacer)$);
\clip[rounded corners=10pt] ($(d)+(225:\spacer)$) rectangle ($(c)+(45:\spacer)$);
\draw (d)--(f) (e)--(b);
\draw[densely dashed] (d)--(a)--(c)--(f);
\draw[densely dashed] (d)--++(270:\spacer) (f)--++(270:\spacer);
\draw (b)--++(90:\spacer) (d)--++(180:\spacer) (f)--++(0:\spacer);
\draw[dotted] (e)--++(250:\spacer) (e)--++(270:\spacer) (e)--++(290:\spacer);
\draw[dotted] (a)--++(115:\spacer) (a)--++(135:\spacer) (a)--++(155:\spacer);
\draw[dotted] (c)--++(25:\spacer) (c)--++(45:\spacer) (c)--++(65:\spacer);
\foreach \name\type in {a/\maxx, b/\sadd, c/\maxx, d/\sadd, e/\minn, f/\sadd}{
  \type{\name}
}
\end{scope}
\node at (5,1) {$\leftrightarrow$};
\node[anchor=west] at (0,2.8) {edge-min move};
\end{scope}
\begin{scope}[shift={(6,-4)}]
\foreach \x\y\name in {0/2/a, 2/2/b, 4/2/c, 0/0/d, 2/0/e, 4/0/f, 2/1.33/top, 2/.66/bottom}{
  \coordinate (\name) at (\x,\y);
}
\fill[rounded corners=10pt,cellfill] ($(d)+(225:\spacer)$) rectangle ($(c)+(45:\spacer)$);
\clip[rounded corners=10pt] ($(d)+(225:\spacer)$) rectangle ($(c)+(45:\spacer)$);
\draw (d)--(f) (e)--(b);
\draw[densely dashed] (d)--(a)--(c)--(f) (a)--(bottom)--(c);
\draw[densely dashed] (d)--++(270:\spacer) (f)--++(270:\spacer);
\draw (b)--++(90:\spacer) (d)--++(180:\spacer) (f)--++(0:\spacer);
\draw[dotted] (e)--++(250:\spacer) (e)--++(270:\spacer) (e)--++(290:\spacer);
\draw[dotted] (a)--++(115:\spacer) (a)--++(135:\spacer) (a)--++(155:\spacer);
\draw[dotted] (c)--++(25:\spacer) (c)--++(45:\spacer) (c)--++(65:\spacer);
\foreach \name\type in {a/\maxx, b/\sadd, c/\maxx, d/\sadd, e/\minn, f/\sadd, top/\minn, bottom/\sadd}{
  \type{\name}
}
\end{scope}
\end{tikzpicture}\]
\end{definition}

\begin{definition}[Vertex Moves]\label{def:vertexmove}
A \emph{vertex move} is \edits{addition (cancellation)} of a pair of critical points at an existing critical point.
\[\begin{tikzpicture}
\fill[rounded corners=10pt,cellfill] (-.3,-1.25) rectangle (4.25,1.25);
\begin{scope}
\clip[rounded corners=10pt] (-.3,-1.25) rectangle (4.25,1.25);
\begin{scope}[rotate=45]
\foreach \x\y\name in {0/0/a, \elen/0/b, .75*\elen/0/bb, 0/-1*\elen/c, 0/-.75*\elen/cc, \elen/-1*\elen/d, 2*\elen/-1*\elen/e, 2*\elen/-1.25*\elen/ee, \elen/-2*\elen/f, 1.25*\elen/-2*\elen/ff, 2*\elen/-2*\elen/g}{
  \coordinate (\name) at (\x,\y);
}
\draw (c)--(a)--(b) (e)--(g)--(f);
\draw[densely dashed] (b)--(f) (c)--(e);
\draw[densely dashed] (b)--++(90:\spacer) (c)--++(180:\spacer) (e)--++(0:\spacer) (f)--++(270:\spacer);
\draw (b)--++(45:\twospacer) (c)--++(225:\twospacer) (e)--++(45:\twospacer) (f)--++(225:\twospacer);
\draw[dotted] (a)--++(115:\spacer) (a)--++(135:\spacer) (a)--++(155:\spacer);
\draw[dotted] (g)--++(295:\spacer) (g)--++(315:\spacer) (g)--++(335:\spacer);
\draw[dotted] (d)--++(40:1.5*\elen) (d)--++(45:1.5*\elen) (d)--++(50:1.5*\elen);
\draw[dotted] (d)--++(220:1.5*\elen) (d)--++(225:1.5*\elen) (d)--++(230:1.5*\elen);
\foreach \name\type in {a/\minn, b/\sadd, c/\sadd, d/\maxx, e/\sadd, f/\sadd, g/\minn}{
  \type{\name}
}
\end{scope}\end{scope}
\node at (5,0) {$\leftrightarrow$};
\node[anchor=west] at (0,1.7) {vertex-max move};
\begin{scope}[shift={(6.1,0)}]
\fill[rounded corners=10pt,cellfill] (-.3,-1.25) rectangle (4.25,1.25);
\begin{scope}
\clip[rounded corners=10pt] (-.3,-1.25) rectangle (4.25,1.25);
\begin{scope}[rotate=45]
\foreach \x\y\name in {0/0/a, 1*\elen/0/b, .75*\elen/0/bb, 0/-1*\elen/c, 0/-.75*\elen/cc, \elen/-1*\elen/d, 2*\elen/-1*\elen/e, 2*\elen/-1.25*\elen/ee, 1*\elen/-2*\elen/f, 1.25*\elen/-2*\elen/ff, 2*\elen/-2*\elen/g}{
  \coordinate (\name) at (\x,\y);
}
\coordinate (dt) at ($(b)!.5!(e)$);
\coordinate (db) at ($(c)!.5!(f)$);
\draw (c)--(a)--(b) (e)--(g)--(f);
\draw (c)--(a)--(b) (a)--(d) (e)--(g)--(f) (g)--(d);
\draw[densely dashed] (b)--(dt)--(e) (c)--(db)--(f) (dt)--(db);
\draw[densely dashed] (b)--++(90:\spacer) (c)--++(180:\spacer) (e)--++(0:\spacer) (f)--++(270:\spacer);
\draw (b)--++(45:\twospacer) (c)--++(225:\twospacer) (e)--++(45:\twospacer) (f)--++(225:\twospacer);
\draw[dotted] (a)--++(115:\spacer) (a)--++(135:\spacer) (a)--++(155:\spacer);
\draw[dotted] (g)--++(295:\spacer) (g)--++(315:\spacer) (g)--++(335:\spacer);
\draw[dotted] (dt)--++(25:1.5*\elen) (dt)--++(45:1.5*\elen) (dt)--++(65:1.5*\elen);
\draw[dotted] (db)--++(205:1.5*\elen) (db)--++(225:1.5*\elen) (db)--++(245:1.5*\elen);
\foreach \name\type in {a/\minn, b/\sadd, c/\sadd, d/\sadd, dt/\maxx, db/\maxx, e/\sadd, f/\sadd, g/\minn}{
  \type{\name}
}
\end{scope}\end{scope}\end{scope}
\begin{scope}[shift={(0,-3.8)}]
\fill[rounded corners=10pt,cellfill] (-.3,-1.25) rectangle (4.25,1.25);
\begin{scope}
\clip[rounded corners=10pt] (-.3,-1.25) rectangle (4.25,1.25);
\begin{scope}[rotate=45]
\foreach \x\y\name in {0/0/a, \elen/0/b, .75*\elen/0/bb, 0/-1*\elen/c, 0/-.75*\elen/cc, \elen/-1*\elen/d, 2*\elen/-1*\elen/e, 2*\elen/-1.25*\elen/ee, \elen/-2*\elen/f, 1.25*\elen/-2*\elen/ff, 2*\elen/-2*\elen/g}{
  \coordinate (\name) at (\x,\y);
}
\draw[densely dashed] (c)--(a)--(b) (e)--(g)--(f);
\draw (b)--(f) (c)--(e);
\draw (b)--++(90:\spacer) (c)--++(180:\spacer) (e)--++(0:\spacer) (f)--++(270:\spacer);
\draw[densely dashed] (b)--++(45:\twospacer) (c)--++(225:\twospacer) (e)--++(45:\twospacer) (f)--++(225:\twospacer);
\draw[dotted] (a)--++(115:\spacer) (a)--++(135:\spacer) (a)--++(155:\spacer);
\draw[dotted] (g)--++(295:\spacer) (g)--++(315:\spacer) (g)--++(335:\spacer);
\draw[dotted] (d)--++(40:1.5*\elen) (d)--++(45:1.5*\elen) (d)--++(50:1.5*\elen);
\draw[dotted] (d)--++(220:1.5*\elen) (d)--++(225:1.5*\elen) (d)--++(230:1.5*\elen);
\foreach \name\type in {a/\maxx, b/\sadd, c/\sadd, d/\minn, e/\sadd, f/\sadd, g/\maxx}{
  \type{\name}
}
\end{scope}\end{scope}
\node[anchor=west] at (0,1.7) {vertex-min move};
\node at (5,0) {$\leftrightarrow$};
\begin{scope}[shift={(6.1,0)}]
\fill[rounded corners=10pt,cellfill] (-.3,-1.25) rectangle (4.25,1.25);
\begin{scope}
\clip[rounded corners=10pt] (-.3,-1.25) rectangle (4.25,1.25);
\begin{scope}[rotate=45]
\foreach \x\y\name in {0/0/a, 1*\elen/0/b, .75*\elen/0/bb, 0/-1*\elen/c, 0/-.75*\elen/cc, \elen/-1*\elen/d, 2*\elen/-1*\elen/e, 2*\elen/-1.25*\elen/ee, 1*\elen/-2*\elen/f, 1.25*\elen/-2*\elen/ff, 2*\elen/-2*\elen/g}{
  \coordinate (\name) at (\x,\y);
}
\coordinate (dt) at ($(b)!.5!(e)$);
\coordinate (db) at ($(c)!.5!(f)$);
\draw[densely dashed] (c)--(a)--(b) (a)--(d) (e)--(g)--(f) (g)--(d);
\draw (b)--(dt)--(e) (c)--(db)--(f) (dt)--(db);
\draw (b)--++(90:\spacer) (c)--++(180:\spacer) (e)--++(0:\spacer) (f)--++(270:\spacer);
\draw[densely dashed] (b)--++(45:\twospacer) (c)--++(225:\twospacer) (e)--++(45:\twospacer) (f)--++(225:\twospacer);
\draw[dotted] (a)--++(115:\spacer) (a)--++(135:\spacer) (a)--++(155:\spacer);
\draw[dotted] (g)--++(295:\spacer) (g)--++(315:\spacer) (g)--++(335:\spacer);
\draw[dotted] (dt)--++(25:1.5*\elen) (dt)--++(45:1.5*\elen) (dt)--++(65:1.5*\elen);
\draw[dotted] (db)--++(205:1.5*\elen) (db)--++(225:1.5*\elen) (db)--++(245:1.5*\elen);
\foreach \name\type in {a/\maxx, b/\sadd, c/\sadd, d/\sadd, dt/\minn, db/\minn, e/\sadd, f/\sadd, g/\maxx}{
  \type{\name}
}
\end{scope}\end{scope}\end{scope}\end{scope}
\end{tikzpicture}\]
\end{definition}

The face, edge, and vertex moves are ways of manipulating the Morse--Smale complex to obtain another Morse--Smale complex. These moves do not have functional values associated with the critical points, so they are manipulations of the Morse--Smale complex and not of the underlying function.

\begin{theorem}
\label{thm:exh_moves}
The Morse--Smale graph of any two Morse--Smale functions is related by a sequence of face, edge, and vertex moves.
\end{theorem}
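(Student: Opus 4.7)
The plan is to proceed by induction on the number of saddles, reducing any Morse--Smale graph $\Gamma$ on $\Sspace^2$ to the canonical trivial graph (one maximum, one minimum, no saddles) by a sequence of cancellations realized as inverse moves. Once this reduction is in hand, the theorem follows immediately: given two Morse--Smale graphs $\Gamma_f$ and $\Gamma_g$, reduce each to the trivial graph and concatenate one reduction with the reverse of the other, using that each move in Definitions~\ref{def:facemove}--\ref{def:vertexmove} is explicitly bidirectional.

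The base case $n = 0$ saddles is immediate from the Morse inequalities on $\Sspace^2$: the constraint $M - S + m = \chi(\Sspace^2) = 2$ forces exactly one maximum and one minimum, and the resulting graph is two isolated vertices. For the inductive step, given $\Gamma$ with at least one saddle, I would locate a cancellable pair of consecutive-index critical points---so either (saddle, max) or (saddle, min)---whose cancellation is locally modeled by one of the six moves. The Cerf-theoretic perspective is the guiding principle here: cancellation of a consecutive-index pair is locally the reverse of a cubic birth--death singularity~\cite{Cerf1970,hatcher_pseudo-isotopies_1973}, and the \emph{location} of the corresponding degenerate critical point in the Morse--Smale complex---in the interior of a 2-cell, on the interior of an edge, or at an existing vertex---selects which of the face, edge, or vertex moves applies. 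The Quadrangle Lemma~\cite{EdelsbrunnerHarerZomorodian2003} constrains every 2-cell to have two opposite saddle corners and two opposite extremum corners, which sharply restricts the possible local configurations around any cancellable pair and drives the case analysis.

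The main obstacle is guaranteeing that every non-trivial $\Gamma$ actually admits a cancellable pair whose local picture matches one of the six moves. A priori, the combinatorics around a saddle could be entangled---for instance via a saddle--saddle separatrix (a codimension-one event in the space of Morse--Smale pairs that is not itself a birth--death)---so that no immediate cancellation is realized as a move. To handle this I would perform an auxiliary face move to introduce a small pair that disentangles the offending configuration, carry out the intended cancellation, and then cancel the auxiliary pair; a careful bookkeeping shows the net effect is a composition of moves that strictly decreases the saddle count. Using the Quadrangle Lemma together with the global constraint $M - S + m = 2$, I expect to show that at each inductive step at least one cell of $\Gamma$ supports a locally simple cancellable configuration, possibly after a bounded number of such auxiliary moves, which closes the induction.
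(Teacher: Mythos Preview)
Your route differs from the paper's. Rather than reducing each graph to the trivial one and concatenating, the paper invokes Cerf's theorem \emph{globally}: any two Morse functions are joined by a generic one-parameter family that is Morse at all but finitely many times, and at each exceptional time a single cubic birth--death singularity appears. The proof then performs a local case analysis on where in the existing quadrangulation the new pair lands---interior of a face, interior of an edge, or at a vertex---and checks, using the Quadrangle Lemma, that each possibility forces exactly one of the six moves. So the paper classifies the transitions along a path that Cerf already hands it, while you attempt to manufacture a path by simplifying each endpoint separately.

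Your inductive scheme is a reasonable alternative and would yield a more self-contained combinatorial argument, but the obstacle you flag is a red herring and the real gap is left open. A saddle--saddle separatrix cannot occur in the Morse--Smale graph of a surface: the transversality condition forces each saddle's four separatrices to alternate between maxima and minima, so there are simply no saddle--saddle edges to entangle anything. What your induction actually needs is the lemma that every nontrivial Morse--Smale graph on $\Sspace^2$ contains a saddle--extremum pair whose neighborhood matches one of the six inverse-move templates. This is true, but it requires an explicit combinatorial argument (for instance, locate an extremum whose stable or unstable disk has minimal boundary complexity and verify it fits a face or edge template); your ``insert an auxiliary pair, cancel the target, then cancel the auxiliary pair'' maneuver does not obviously terminate or strictly decrease the saddle count without such a lemma already in hand. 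The paper's approach sidesteps this existence question entirely, since Cerf supplies the sequence of birth--death events and only the local classification remains.
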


The proof of this theorem relies on the following restriction imposed on the Morse--Smale graph. We recall this fact from~\cite{EdelsbrunnerHarerZomorodian2003} without proof.

\begin{lemma}[Quadrangle Lemma]
Each region of the Morse--Smale complex is a quadrangle with vertices of index 0, 1, 2, and 1, in this order around the region. The boundary is possibly glued to itself along vertices and arcs.
\end{lemma}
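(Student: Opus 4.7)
The plan is to reconstruct the boundary of a 2-cell $C$ of the Morse--Smale complex from the gradient flow on $\Sspace^2$, using the dimension formula coming from Morse--Smale transversality, the local Morse coordinate picture around each critical point, and compactness of $\Sspace^2$. First, I would identify $C$ as a connected component of $U(p_0)\cap S(p_2)$ for some minimum $p_0$ and some maximum $p_2$: Morse--Smale transversality yields $\dim(U(p)\cap S(q))=\dim U(p)+\dim S(q)-2$ when nonempty, so a 2-dimensional intersection on $\Sspace^2$ forces $\dim U(p)=\dim S(q)=2$, i.e., $p$ is a minimum and $q$ is a maximum. Every integral curve through $C$ has $p_0$ as backward limit and $p_2$ as forward limit, so $\{p_0,p_2\}\subset\bar C$ and $\bar C\subset\overline{U(p_0)}\cap\overline{S(p_2)}$.

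Next, I would classify the non-critical points of $\partial C$. For a regular $y\in\partial C$, the forward flow limits to a critical point $q_+$ and the backward flow to $q_-$. If $y$ were in $U(p_0)\cap S(p_2)$, it would sit in some open component, and being a limit of points of $C$ this component would have to be $C$ itself, contradicting $y\notin C$. Hence $q_+\neq p_2$ or $q_-\neq p_0$. Morse--Smale transversality rules out any flow line between two saddles on a surface (the expected dimension of $U(s)\cap S(s')$ is $-1$), so any deviating limit must be a saddle, and $y\in S(s)\cup U(s)$ for some saddle $s$.

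I would then piece together $\partial C$ globally using the local Morse pictures. Near $p_0$ the gradient flow is radially outward, and a small punctured disk at $p_0$ is partitioned into open sectors by the branches of $S(s)$ terminating at $p_0$; each sector lies in a unique 2-cell, and since $C$ is connected it fills exactly one such sector. Therefore $\partial C$ near $p_0$ consists of two arcs on branches of $S(s_1)$ and $S(s_2)$ for saddles $s_1,s_2$ (possibly equal). The arc along $S(s_1)$ ascends to $s_1$, and the local four-sector picture at $s_1$ (separated by $S(s_1)\cup U(s_1)$) forces $\partial C$ to continue across $s_1$ along a branch of $U(s_1)$ up to a maximum $m_1$. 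I would then argue $m_1=p_2$: any maximum $m\neq p_2$ is an attractor of $\phi_t$, so no sequence in $S(p_2)$ can accumulate at $m$, hence $m\notin\overline{S(p_2)}\supset\bar C$. By symmetry the other side of $\partial C$ traces $p_0\to s_2\to p_2$ along $S(s_2)\cup U(s_2)$. Thus $\partial C$ is a cycle of four arcs meeting at four vertices of indices $0,1,2,1$ in cyclic order, with degeneracies such as $s_1=s_2$ or arc identifications accounting for the possible self-gluings.

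The hardest step is the second: excluding pathological boundary behavior (flow lines with no critical limit, or saddle--saddle connections) so that the local sector pictures can be glued into one global cycle. Both exclusions rely essentially on compactness of $\Sspace^2$ and on Morse--Smale transversality. Once these pathologies are ruled out, the local pictures at minima, saddles, and maxima assemble uniquely into the claimed quadrangle.
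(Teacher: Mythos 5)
The paper does not prove this lemma at all: it is recalled verbatim from \cite{EdelsbrunnerHarerZomorodian2003} (where it is established in the piecewise linear setting), so there is nothing internal to compare against. Your smooth-flow argument is the natural substitute, and its skeleton is the standard one: transversality forces a two-dimensional component to be a piece of $U(p_0)\cap S(p_2)$ with $p_0$ a minimum and $p_2$ a maximum, regular boundary points must lie on saddle separatrices (saddle--saddle connections being excluded by transversality, other extrema by openness of their stable/unstable manifolds), and the local models at the minimum, the saddles, and the maximum are then chained together, with the attractor argument pinning the maximum to $p_2$ and identifications of corners and arcs accounting for the self-gluings.

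Two steps, however, are asserted rather than proved, and the first of them is really the crux of the lemma. You claim that ``since $C$ is connected it fills exactly one such sector'' at $p_0$; but connectedness of $C$ alone does not preclude $C$ from approaching $p_0$ through two different sectors while being connected far away from $p_0$ (a horseshoe-shaped component), and ruling this out is precisely what makes the region a quadrangle rather than a $2k$-gon with identifications. The standard fix is to use the flow structure: every flow line of $C$ runs from $p_0$ to $p_2$ and meets each intermediate level exactly once, so $C$ is homeomorphic to (cross-section)$\times$(interval); equivalently, backward flow carries any compact subset of $C$ uniformly into an arbitrarily small punctured disk about $p_0$, where a path joining two different sectors would have to cross a separating stable branch. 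The same product structure (or a Jordan-curve argument with the traced cycle) is also what you need for the final, likewise unargued, claim that $\partial C$ consists of exactly the four traced arcs and nothing more. Finally, your local analysis at $p_0$ presupposes that some stable separatrix descends to $p_0$; the degenerate case of a sphere with exactly two critical points (the single cell is an annulus whose boundary is $\{p_0,p_2\}$, with no arcs at all) should be noted separately, as it is the only case in which that hypothesis fails.
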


\begin{proof}[Proof of Theorem~\ref{thm:exh_moves}]
We recall the following fact due to \cite{Cerf1970}: any two Morse functions on a manifold $\Mspace$, and therefore any two Morse flows, can be connected by a path in the space of all smooth functions on $\Mspace$. Furthermore, this path can be chosen to be comprised of Morse functions for all but finitely many times -- at which times the function has a single cubic degenerate critical point -- in addition to non-degenerate critical points. As one moves along this path of functions, the cubic degenerate critical point either gives rise or removes two non-degenerate critical points. Translating this general result to the Morse--Smale complex of a surface, it suffices to consider introducing or removing a pair of critical points into a quadrangle decomposition of the surface. Depending on where we introduce this pair, we shall obtain the different moves of Definitions~\ref{def:facemove}--\ref{def:vertexmove}. Notice that since the cubic degenerate critical point must give rise to critical points of adjacent index, a saddle point must always be involved in these moves. Since adding and removing critical points are symmetric operations, we only discuss the case of adding critical points.

First suppose the pair of critical points is added in the interior of a quadrangle. Then, as in Figure~\ref{fig:face-move-proof}, the introduced saddle must have two flow lines to a single vertex on the boundary of the quadrangle. If a maximum-saddle pair is added, the additional saddle must have two flow lines to the unique minimum, and for an added minimum-saddle pair, the saddle must have two flow lines to the unique maximum. This completely determines the move, and hence is the face-max or face-min move.
    
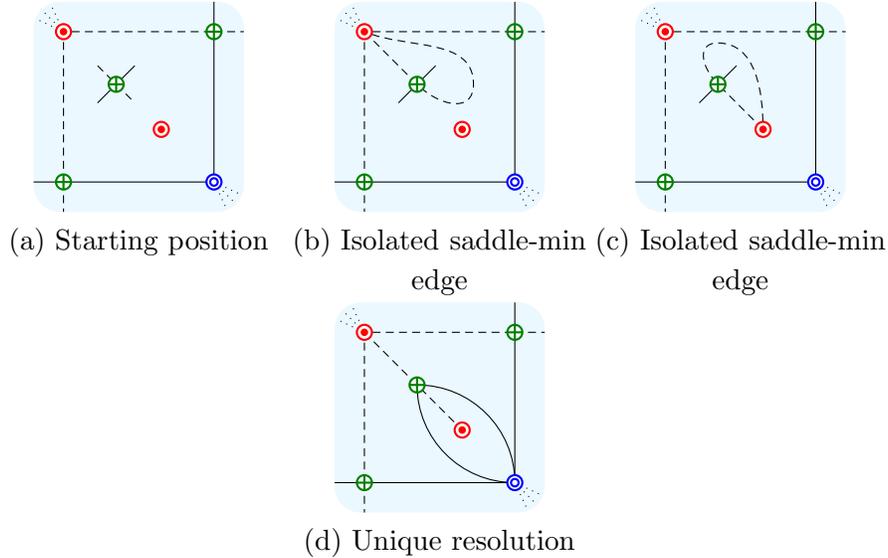
\begin{figure}[h]\centering
\newcommand\felen{2}
\newcommand\estretch{.35}
\newcommand\vfactor{4}
\begin{tikzpicture}[lab2/.style={anchor=north,yshift=3pt}]
\begin{scope}
\node (lab) at (\felen/2,-.8) {(a) Starting position};
\foreach \x\y\name in {0/\felen/a, \felen/\felen/b, 0/0/c, \felen/0/d, .35*\felen/.65*\felen/n1, .65*\felen/.35*\felen/n2}{
  \coordinate (\name) at (\x,\y);
}
\fill[rounded corners=10pt,cellfill] ($(c)+(225:\spacer)$) rectangle ($(b)+(45:\spacer)$);
\clip[rounded corners=10pt] ($(c)+(225:\spacer)$) rectangle ($(b)+(45:\spacer)$);
\draw[densely dashed] ($(n1)+(135:\estretch)$)--++(315:2*\estretch);
\draw ($(n1)+(45:\estretch)$)--++(225:2*\estretch);
\draw (b)--(d)--(c);
\draw[densely dashed] (c)--(a)--(b);
\draw (b)--++(90:\spacer) (c)--++(180:\spacer);
\draw[densely dashed] (b)--++(0:\spacer) (c)--++(270:\spacer);
\draw[dotted] (a)--++(115:\spacer) (a)--++(135:\spacer) (a)--++(155:\spacer);
\draw[dotted] (d)--++(295:\spacer) (d)--++(315:\spacer) (d)--++(335:\spacer);
\foreach \name\type in {a/\maxx, b/\sadd, c/\sadd, d/\minn, n1/\sadd, n2/\maxx}{
  \type{\name}
}
\end{scope}
\begin{scope}[shift={(\vfactor,0)}]
\node (lab) at (\felen/2,-.8) {(b) Isolated saddle-min};
\node[lab2] at (lab.south) {edge};
\foreach \x\y\name in {0/\felen/a, \felen/\felen/b, 0/0/c, \felen/0/d, .35*\felen/.65*\felen/n1, .65*\felen/.35*\felen/n2}{
  \coordinate (\name) at (\x,\y);
}
\fill[rounded corners=10pt,cellfill] ($(c)+(225:\spacer)$) rectangle ($(b)+(45:\spacer)$);
\clip[rounded corners=10pt] ($(c)+(225:\spacer)$) rectangle ($(b)+(45:\spacer)$);
\draw[densely dashed] (a)--(n1) .. controls +(-45:.4) and +(270:.4) .. ++(0:.75) to [out=90,in=-22.5] (a);
\draw ($(n1)+(45:\estretch)$)--++(225:2*\estretch);
\draw (b)--(d)--(c);
\draw[densely dashed] (c)--(a)--(b);
\draw (b)--++(90:\spacer) (c)--++(180:\spacer);
\draw[densely dashed] (b)--++(0:\spacer) (c)--++(270:\spacer);
\draw[dotted] (a)--++(115:\spacer) (a)--++(135:\spacer) (a)--++(155:\spacer);
\draw[dotted] (d)--++(295:\spacer) (d)--++(315:\spacer) (d)--++(335:\spacer);
\foreach \name\type in {a/\maxx, b/\sadd, c/\sadd, d/\minn, n1/\sadd, n2/\maxx}{
  \type{\name}
}
\end{scope}
\begin{scope}[shift={(2*\vfactor,0)}]
\node (lab) at (\felen/2,-.8) {(c) Isolated saddle-min};
\node[lab2] at (lab.south) {edge};
\foreach \x\y\name in {0/\felen/a, \felen/\felen/b, 0/0/c, \felen/0/d, .35*\felen/.65*\felen/n1, .65*\felen/.35*\felen/n2}{
  \coordinate (\name) at (\x,\y);
}
\fill[rounded corners=10pt,cellfill] ($(c)+(225:\spacer)$) rectangle ($(b)+(45:\spacer)$);
\clip[rounded corners=10pt] ($(c)+(225:\spacer)$) rectangle ($(b)+(45:\spacer)$);
\draw[densely dashed] (n2)--(n1) .. controls +(135:.3) and +(180:.3) .. ++(90:.55) to [out=0,in=90] (n2);
\draw ($(n1)+(45:\estretch)$)--++(225:2*\estretch);
\draw (b)--(d)--(c);
\draw[densely dashed] (c)--(a)--(b);
\draw (b)--++(90:\spacer) (c)--++(180:\spacer);
\draw[densely dashed] (b)--++(0:\spacer) (c)--++(270:\spacer);
\draw[dotted] (a)--++(115:\spacer) (a)--++(135:\spacer) (a)--++(155:\spacer);
\draw[dotted] (d)--++(295:\spacer) (d)--++(315:\spacer) (d)--++(335:\spacer);
\foreach \name\type in {a/\maxx, b/\sadd, c/\sadd, d/\minn, n1/\sadd, n2/\maxx}{
  \type{\name}
}
\end{scope}
\begin{scope}[shift={(\vfactor,-\vfactor)}]
\node (lab) at (\felen/2,-.8) {(d) Unique resolution};
%
\foreach \x\y\name in {0/\felen/a, \felen/\felen/b, 0/0/c, \felen/0/d, .35*\felen/.65*\felen/n1, .65*\felen/.35*\felen/n2}{
  \coordinate (\name) at (\x,\y);
}
\fill[rounded corners=10pt,cellfill] ($(c)+(225:\spacer)$) rectangle ($(b)+(45:\spacer)$);
\clip[rounded corners=10pt] ($(c)+(225:\spacer)$) rectangle ($(b)+(45:\spacer)$);
\draw[densely dashed] (a)--(n2);
\draw (n1) to [bend left=45] (d);
\draw (n1) to [bend right=45] (d);
\draw (b)--(d)--(c);
\draw[densely dashed] (c)--(a)--(b);
\draw (b)--++(90:\spacer) (c)--++(180:\spacer);
\draw[densely dashed] (b)--++(0:\spacer) (c)--++(270:\spacer);
\draw[dotted] (a)--++(115:\spacer) (a)--++(135:\spacer) (a)--++(155:\spacer);
\draw[dotted] (d)--++(295:\spacer) (d)--++(315:\spacer) (d)--++(335:\spacer);
\foreach \name\type in {a/\maxx, b/\sadd, c/\sadd, d/\minn, n1/\sadd, n2/\maxx}{
  \type{\name}
}
\end{scope}
\end{tikzpicture}
\caption{New edges are uniquely determined when a pair of critical points is added in the interior of a quadrangle.}
\label{fig:face-move-proof}
\end{figure}

Next, suppose the pair is added on an edge between two quadrangles. Note that a maximum-saddle pair cannot be added on a separatrix between a minimum and a saddle, and vice-versa, since such a pair would force the saddle in the initial separatrix to have more than four flow lines incident to it. However, adding a maximum-saddle pair to the separatrix between a maximum and a saddle is permissible. In this case, there is a unique way of adding flow lines to obtain a valid configuration, and this is precisely the edge-max move. The edge-min move case is proven similarly.

Finally, suppose the pair is added at a vertex in the quadrangulation. As in the edge moves, a maximum-saddle pair cannot be added at a minimum, and a minimum-saddle pair cannot be added at a maximum. For example, if a minimum-saddle pair was added at a maximum, a quadrangle would be formed with two minima and two saddles, contradicting the Quadrangle Lemma. Suppose a minimum-saddle pair was added at a minimum. Then, the additional saddle must connect to two maxima. If it connects to the same maximum twice, this is a face move. If it connects to two adjacent maxima, meaning they are both connected to a saddle in the quadrangulation, then this is an edge move. Otherwise, the saddle connects to two maxima, and we have the vertex move. The max-saddle case gives rise to the vertex-max move and is proven similarly. 
\qed
\end{proof}

\edits{As the fundamental moves add critical points, the moves change the equivalence class (of the classes described in Section \ref{sec:morsefunc-equivalence}) of the associated Morse--Smale function. However, different moves may give the same change of equivalence class, as witnessed by their Reeb graphs in Figure \ref{fig:fundamental-reeb}. These changes in the Reeb graph are termed \emph{elementary deformations of B-type} and \emph{D-type} by \cite{difabiolandi2016}, though their analysis emphasizes movement of critical values past each other (see Section \ref{sec:tracking-values} for further discussion).}

\begin{figure}[h]\centering
\newcommand\hspacer{3.5}
\begin{tikzpicture}[yscale=1]
\draw[line width=1pt] (-1,0)--(-1,1.5);
\draw[line width=1pt] (0,0)--(0,1.5) (0,.5) .. controls +(0:.4) and +(270:.4) .. (.5,1) --++ (90:.2);
\node at (-.5,.75) {$\leftrightarrow$};
\node at (-.5,-1) {(a) Face-max,};
\node at (-.5,-1.4) {edge-max};
\begin{scope}[shift={(\hspacer,0)}]
\draw[line width=1pt] (-1,0)--(-1,1.5);
\draw[line width=1pt] (0,0)--(0,1.5) (0,1) .. controls +(0:.4) and +(90:.4) .. (.5,.5) --++ (270:.2);
\node at (-.5,.75) {$\leftrightarrow$};
\node at (-.5,-1) {(b) Face-min,};
\node at (-.5,-1.4) {edge-min};
\end{scope}
\begin{scope}[shift={(2*\hspacer,0)}]
\draw[line width=1pt] (-1,0)--(-1,1.5);
\draw[line width=1pt] (0,0)--(0,1.5) (0,.5) .. controls +(0:.4) and +(270:.4) .. (.5,1) --++ (90:.2);
\node at (-.5,.75) {$\leftrightarrow$};
\node at (-.5,-1) {(c) Vertex-max};
\end{scope}
\begin{scope}[shift={(3*\hspacer,0)}]
\draw[line width=1pt] (-1,0)--(-1,1.5);
\draw[line width=1pt] (0,0)--(0,1.5) (0,1) .. controls +(0:.4) and +(90:.4) .. (.5,.5) --++ (270:.2);
\node at (-.5,.75) {$\leftrightarrow$};
\node at (-.5,-1) {(d) Vertex-min};
\end{scope}
\foreach \x in {-1,0,\hspacer-1,\hspacer,3*\hspacer-1,3*\hspacer}{
  \node[anchor=south] at (\x,1.5) {$\vdots$};
}
\foreach \x in {-1,0,\hspacer-1,\hspacer,2*\hspacer-1,2*\hspacer}{
  \node[anchor=north,yshift=5pt] at (\x,0) {$\vdots$};
}
\end{tikzpicture}
\caption{Changes to the Reeb graph by the fundamental moves. }
\label{fig:fundamental-reeb}
\end{figure}
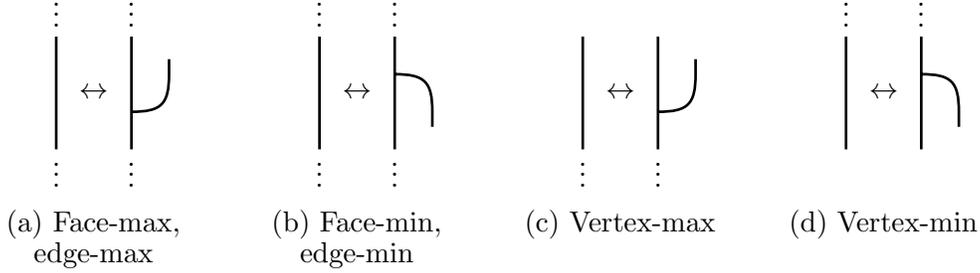

\edits{Moreover, attempting to consider notions of ``distance" on the fundamental moves, as in \cite{damicofrosinilandi2010,bauerlandimemoli2018}, gives no meaningful results, as not specifying critical values allows these distances to be infinitesimally small.}

\begin{remark}
The fundamental moves \edits{also have} implications for the space of all vector fields, with topology induced by distance between functions. This infinite-dimensional stratified space has strata within which vector fields are Morse--Smale, and the \edits{lower-dimensional strata} are where transversality of the stable and unstable manifolds fails to hold. Hence the described operations identify different types of \edits{lower-dimensional} strata, and may be used to count the number of strata within some parameters.
\end{remark}

\section{The Nesting Poset}
\label{sec:results-posets}

To study poset equivalence, we employ the nesting poset introduced in Section~\ref{sec:background}. The nesting poset is used to study the level sets of a Morse function, and describe relations among the posets by using the topology of associated level sets. We also show that the nesting poset is a circle containment order, following research in geometric containment orders~\cite{FishburnTrotter1999}.

\subsection{The Nesting Poset of Level Sets}

We first show that the poset isomorphism condition (2) in Definition~\ref{def:poset-equivalence} can be formulated as a nesting poset isomorphism, or equivalently as a circle containment order isomorphism.

\subsubsection{Nesting Poset of Jordan Curves}
A \emph{Jordan curve} is a non-self-intersecting continuous loop in the plane. Formally, a Jordan curve is a simple closed curve in $\Rspace^2$ that is the image of an injective continuous map $\phi \colon \Sspace^1 \to \Rspace^2$. Let $\gamma \colonequals \image(\phi)$ denote a Jordan curve. The Jordan curve theorem~\cite{Veblen1905} states that the complement $\Rspace^2 - \gamma$ of every Jordan curve $\gamma$ consists of exactly two connected components: one bounded interior component, denoted as $\interior(\gamma)$, and one unbounded exterior component. In this paper, we consider two Jordan curves $\gamma_1$ and $\gamma_2$ to be \emph{semi-disjoint} if they intersect at a single point ($\gamma_1 \cap \gamma_2 = *$). Two nonidentical Jordan curves are \emph{nested} if $\interior(\gamma_1) \subseteq \interior(\gamma_2)$ or vice versa.

Given a set of $m$ Jordan curves $\Gamma \colonequals \{\gamma_1, \ldots, \gamma_m\}$ with at most one pair of semi-disjoint curves (and the rest are disjoint), its complement $\Rspace^2 - \Gamma$ consists of exactly $m+1$ connected components: $m$ bounded components and one unbounded  component.

\begin{remark}
Let $P \colonequals \pi_0(\Rspace^2 - \Gamma)$ denote the set of (path-)connected components of $\Rspace^2 - \Gamma$. The closure of each bounded component in $P$ is a collection of elements of $\Gamma$, consisting of an exterior boundary and zero or more interior boundaries. With a slight abuse of notation, we speak of the boundary of a component in $P$ as the boundary of its closure. Let $p_i \in P$ denote the component whose exterior boundary is~$\gamma_i \in \Gamma$; let $p_0 \in P$ denote the unbounded component. Let $\bdr(p_i)$ denote the set of boundary curves of $p_i$, where we note that $\bdr(p_0)$ contains only interior boundaries. Two components $p_i, p_j \in P$ are \emph{adjacent} if they share a  boundary in $\Gamma$, that is, $\bdr(p_i) \cap \bdr(p_j) \in \Gamma$.
\end{remark}

\begin{definition}[Nesting Poset of Jordan Curves]
Let $\Gamma=\{\gamma_1, \ldots, \gamma_n\}$ and $P = \pi_0(\Rspace^2 - \Gamma)$ be as above. For any two adjacent components $p_i, p_j \in P$, define a binary relation~$\leq$, such that $p_i \leq p_j$ if and only if (1) $\interior(\gamma_i) \subseteq \interior(\gamma_j)$, or (2) $p_j$ is unbounded. The \emph{nesting poset} associated to $\Gamma$ is $N(\Gamma) \colonequals (P, \leq_P)$, where $\leq_P$ is the transitive closure of $\leq$ on $P$.
\end{definition}

Reflexivity, anti-symmetry, and transitivity of $\leq_P$ follow from the same properties of set containment $\subseteq$, so $N(\Gamma)$ is indeed a poset. Figure~\ref{fig:nesting} illustrates five examples of Jordan curves (in white) on the plane. Curves in Figure~\ref{fig:nesting}(a), Figure~\ref{fig:nesting}(b), and  Figure~\ref{fig:nesting}(d) are disjoint; while curves in Figure~\ref{fig:nesting}(c) and Figure~\ref{fig:nesting}(e) are semi-disjoint. For each set $\Gamma$, the nesting poset $N(\Gamma)$ is visualized by its Hasse diagram: each vertex corresponds to an element in $P$ (a green shaded region); each arrow indicates a binary relation between adjacent elements (that is, an arrow exists from $p_i$ to $p_j$ if and only if $p_i \leq p_j$).

\begin{figure}[!ht]\centering
\begin{tikzpicture}[scale=1,
  circ/.style={line width=2pt,draw=white},
  conx/.style={line width=1.5pt,decoration={markings,mark=at position 0.5 with \arrow{Straight Barb[width=8pt,length=4pt]}},postaction=decorate}]
\fill[newcol] (0,0) rectangle (4,3);
\foreach \x\y\lab in {1.1/1.5/a, 2.9/1.5/b, 2/2.5/c}{
  \coordinate (\lab) at (\x,\y);
  \fill (\lab) circle (.1);
}
\draw[circ] (a) circle (.75);
\draw[circ] (b) circle (.75);
\draw[conx] (a)--(c);
\draw[conx] (b)--(c);
\node at (2,-.5) {(a)};
\begin{scope}[shift={(5,0)}]
\fill[newcol] (0,0) rectangle (4,3);
\foreach \x\y\lab in {1.8/1.5/a, 2.7/1.5/b, 3.6/1.5/c}{
  \coordinate (\lab) at (\x,\y);
  \fill (\lab) circle (.1);
}
\draw[circ] (2,1.5) circle (1.2);
\draw[circ] (1.8,1.5) circle (.5);
\draw[conx] (a)--(b);
\draw[conx] (b)--(c);
\node at (2,-.5) {(b)};
\end{scope}
\begin{scope}[shift={(-2.5,-4)}]
\fill[newcol] (0,0) rectangle (4,3);
\foreach \x\y\lab in {1.25/1.5/a, 2.75/1.5/b, 2/2.5/c}{
  \coordinate (\lab) at (\x,\y);
  \fill (\lab) circle (.1);
}
\draw[circ] (a) circle (.75);
\draw[circ] (b) circle (.75);
\draw[conx] (a)--(c);
\draw[conx] (b)--(c);
\node at (2,-.5) {(c)};
\end{scope}
\begin{scope}[shift={(2.5,-4)}]
\fill[newcol] (0,0) rectangle (4,3);
\foreach \x\y\lab in {2.6/1.5/b, 3.6/1.5/c}{
  \coordinate (\lab) at (\x,\y);
  \fill (\lab) circle (.1);
}
\fill[white] (2,1.5) circle (3pt);
\draw[circ] (2,1.5) circle (1.2);
\draw[conx] (b)--(c);
\node at (2,-.5) {(d)};
\end{scope}
\begin{scope}[shift={(7.5,-4)}]
\fill[newcol] (0,0) rectangle (4,3);
\foreach \x\y\lab in {1.3/1.5/a, 2.2/1.5/b, 3.6/1.5/c}{
  \coordinate (\lab) at (\x,\y);
  \fill (\lab) circle (.1);
}
\draw[circ] (2,1.5) circle (1.2);
\draw[circ] (1.3,1.5) circle (.5);
\draw[conx] (a)--(b);
\draw[conx] (b)--(c);
\node at (2,-.5) {(e)};
\end{scope}
\end{tikzpicture}
\caption{Poset structures of level sets of some Morse functions for two regular values (a, b), and three critical values, (c, d, and e).}
\label{fig:nesting}
\end{figure}
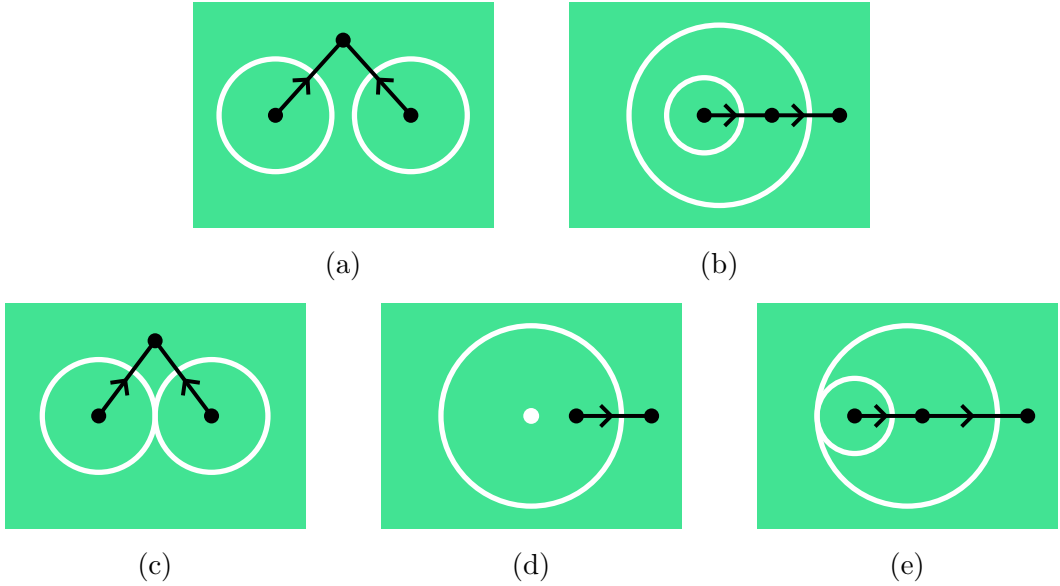

\begin{figure}[!ht]\centering
\begin{tikzpicture}
\foreach \y in {0,...,3}{
  \draw[opacity=.5,dotted] (-5.5,\y)--(5.5,\y);
}
\begin{scope}[shift={(-.9,0)}]
\draw[barco] (-.15,1) -- (-.15,2);
\draw[barcc] (.15,0) -- (.15,3);
\end{scope}
\begin{scope}[shift={(.6,0)}]
\draw[|->] (0,-.75)--(0,4);
\node (r) at (0,-1.25) {$\Rspace$};
\foreach \y\lab in {-.5/{a_0}, .5/{a_1}, 1.5/{a_2}, 2.5/{a_3}, 3.5/{a_4}}{
  \draw (-.2,\y)--(.2,\y);
  \node[anchor=west] at (.2,\y) {$\lab$};
}
\foreach \y\lab in {0/{t_1}, 1/{t_2}, 2/{t_3}, 3/{t_4}}{
  \draw (-.2,\y)--(.2,\y);
  \node[anchor=east,fill=white] at (-.2,\y) {$\lab$};
}
\end{scope}
\begin{scope}[shift={(\bwid-2.5,0)}]
\node at (-1*\bwid,-.75) {$\textup{im}(\iota)$};
\draw[fill=newcol] (0,3) arc (90:0:\bwid/2) --++ (270:3-\bwid)
  arc (0:-180:\bwid/2) --++ (90:2-\bwid)
  arc (0:180:\bwid/2) --++ (270:1-\bwid)
  arc (0:-180:\bwid/2) --++ (90:1-\bwid/2)
  arc (180:90:\bwid) arc (-90:0:\bwid) --++ (90:1-2.5*\bwid)
  arc (180:90:\bwid/2);
\foreach \x\y in {0/1.5, 2*\bwid/.5, 2*\bwid/1.5, 2*\bwid/2.6}{
  \draw (\x-2*\bwid,\y) ellipse (\bwid/2 and \bwid/4);
}
\end{scope}
\begin{scope}[shift={(1*\bwid+2.5,0)}]
\node at (-1*\bwid,-.75) {$\textup{im}(\iota')$};
\draw[fill=newcol] (0,3) arc (90:0:\bwid/2) --++ (270:3-2*\bwid)
  arc (0:-180:1.5*\bwid) --++ (90:2-2*\bwid)
  arc (180:90:\bwid/2) arc (-90:0:1.5*\bwid) --++ (90:1-2*\bwid)
  arc (180:90:\bwid/2);
\draw[fill=newcol!66,dashed] (-2*\bwid,2) arc (90:0:\bwid/2) --++ (270:1-\bwid)
  arc (-180:0:\bwid/2) --++ (90:1+\bwid)
  arc (0:-90:1.5*\bwid);
\foreach \x\y in {\bwid/1.5, 2*\bwid/2.6}{
  \draw (\x-2*\bwid,\y) ellipse (\bwid/2 and \bwid/4);
}
\draw (-\bwid,1.5) ellipse (\bwid*1.5 and .6*\bwid);
\draw (-\bwid,.5) ellipse (\bwid*1.5 and .6*\bwid);
\end{scope}
\begin{scope}[shift={(4.5,0)}]
\foreach \x\y in {0/-.5, 0/0, -.3/.5, .3/.5, -.3/1, .3/1, -.6/1.5, 0/1.5, .6/1.5, -.6/2, 0/2, .6/2, -.3/2.5, .3/2.5, 0/3, 0/3.5}{ 
  \fill (\x,\y) circle (.07);
} 
\foreach \lev\start\end in {.5/-.3/.3, 1/-.3/.3, 1.5/-.6/0, 1.5/0/.6, 2/-.6/0, 2/0/.6, 2.5/-.3/.3}{
  \draw[conx] (\start,\lev)--(\end,\lev);
}
\end{scope}
\begin{scope}[shift={(-4.5,0)}]
\foreach \x\y in {0/-.5, 0/0, -.3/.5, .3/.5, -.3/1, .3/1, -.6/1.5, 0/1.5, .6/1.5, -.6/2, 0/2, .6/2, -.3/2.5, .3/2.5, 0/3, 0/3.5}{ 
  \fill (\x,\y) circle (.07);
} 
\foreach \lev\start\end in {.5/-.3/.3, 1/-.3/.3, 1.5/-.6/0, 1.5/.6/0, 2/-.6/0, 2/.6/0, 2.5/-.3/.3}{
  \draw[conx] (\start,\lev)--(\end,\lev);
}
\end{scope}
\end{tikzpicture}
\caption{A Morse function factoring through two different embeddings $\iota$, $\iota'$, which are height equivalent but not poset equivalent, as distinguished by their different nesting posets.}
\label{fig:shotglassworm}
\end{figure}
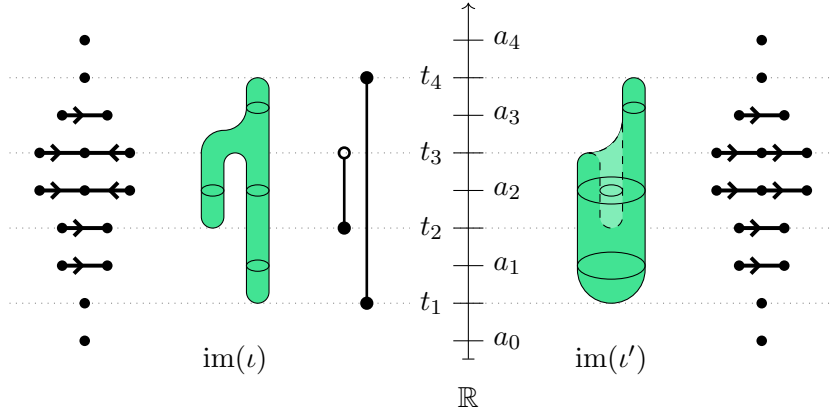

\subsubsection{Nesting Poset of Level Sets}
Let $f\colon \Sspace^2 \rightarrow \Rspace$ be a Morse function that factors
through a smooth embedding $\iota$; that is, $f$ is defined as the composition
$f = \pi \circ \iota$, for $\Sspace^2 \xrightarrow{\iota} \Rspace^3
\xrightarrow{\pi} \Rspace$. Assuming~$\Mspace$ is smooth and compact, the level
set~$f^{-1}(a)$ is a (not necessarily connected) 1-manifold without boundary for a regular value~$a \in \Rspace$ according to the Implicit Function Theorem. The set~$\iota \circ f^{-1}(a)$ is therefore a set of disjoint Jordan curves in the plane $\pi^{-1}(a) \subset \Rspace^3$. For $a = c$ a critical value of $f$, the set~$\iota \circ f^{-1}(a)$ contains either exactly one pair of semi-disjoint Jordan curves or a point (together with other disjoint Jordan curves).

\begin{definition}[Nesting Poset of Level Sets]
\label{def:levelset-nesting}
For any $a \in \Rspace$, let $\Gamma = \iota \circ f^{-1}(a)$ and~$P = \pi_0(\pi^{-1}(a) - \Gamma)$. The \emph{nesting poset} $N_a$ associated with $a$ is the nesting poset of Jordan curves~$\Gamma$ in the plane $\pi^{-1}(a)$. With an abuse of notation, $N_a \colonequals N(\Gamma) = (P, \leq_P)$.
\end{definition}

For example, Figure~\ref{fig:nesting}(a) and Figure~\ref{fig:nesting}(b) illustrate two sets of disjoint Jordan curves (in white) that arise from level sets of two Morse functions $f$ and $g$ at a shared regular value, respectively. Figure~\ref{fig:shotglassworm} describes these examples in the context of their corresponding Morse functions. Specifically, two Morse functions $f, g \colon \Sspace^2 \to \Rspace$ factor through embeddings $\iota, \iota' \colon \Sspace^2 \to \Rspace^3$ with the same barcode, $f = \pi \circ \iota$ and $g = \pi \circ \iota'$ in Figure~\ref{fig:shotglassworm}. For \emph{any} common slicing $a_0 < a_1 < a_2 < a_3 < a_4$, let $F_i \colonequals \pi_0\left(\pi^{-1}(a_i) - \iota \circ f^{-1}(a_i)\right)$ and $G_i \colonequals \pi_0\left(\pi^{-1}(a_i) - \iota' \circ g^{-1}(a_i)\right)$; the map $F_i \to G_i$ is not a poset isomorphism for $i = 2$; in particular, regular value $a_2$ gives rise to a poset structure in Figure~\ref{fig:nesting}(a) for $f$ and a different one in Figure~\ref{fig:nesting}(b) for $g$. By Definition~\ref{def:poset-equivalence}, $f$ and $g$ are not poset equivalent.

\subsubsection{Circle Containment Order}
A partially ordered set $(P, \leq)$ is called a \emph{circle containment order}~\cite{ScheinermanWierman1988}, provided one can assign to each $p_i \in P$ a closed disc in the plane $o_i \subseteq  \Rspace^2$ satisfying $p_i \leq p_j$ if and only if $o_i \subseteq o_j $. Let $\alpha \colon P \to \Rspace^2$ denote such an assignment. If $\phi\colon \Rspace^2 \to \Rspace^2$ is an orientation-preserving homeomorphism, then $\phi$ does not change the circle containment order, so that $\alpha$ and $\phi \circ \alpha$
are equivalent circle containment orders.

The nesting poset structure of $P = \pi_0\left(\pi^{-1}(a) - \iota \circ f^{-1}(a)\right)$, for any regular value $a$, could be understood in terms of a circle containment order. As illustrated in Figure~\ref{fig:nesting}(a), each bounded element in a poset (a green shaded region) $p_{i} \in P$ (where $i>0$) can be assigned a closed disc in the plane, which is the bounded interior component $\interior(\gamma_i)$. The unbounded component $p_0 \in P$ is assigned a closed disc that enclose all other discs. Such an assignment imposes a circle containment order. Hence a nesting poset is a circle containment order.

\subsection{A Morse-Theoretic Perspective on Nesting Posets}

Now we provide theorems analogous to Theorems~\ref{theorem:CMT-A}
and~\ref{theorem:CMT-B}. As before, the Morse function $f\colon \Sspace^2\to \Rspace$ factors through an embedding $\iota$ as $f = \pi\circ \iota$.  Let $L_t \colonequals  \iota \circ f^{-1}(t)$ denote the embedding of its level sets. There are three types of critical points in $\image(\iota)$: local minima, saddles, and local maxima, with indices $0$, $1$, and $2$,  respectively. To study local structure surrounding the critical points, we further classify the saddles into \emph{merging saddles} and \emph{splitting saddles} by investigating the relation between level sets $L_{c-\epsilon}$ and $L_{c+\epsilon}$ as $t$ crosses the critical value $c = f(p)$ of a saddle $p$. If $\epsilon>0$ is small enough, the intervals  $[c-\epsilon, c)$ and $(c, c+\epsilon]$ contain no critical values.

A saddle $p$ is a \emph{merging saddle} if a pair of disjoint Jordan curves in~$L_{c-\epsilon}$ merges into a single Jordan curve in~$L_{c + \epsilon}$ as $t$ crosses $c$. A saddle $p$ is a \emph{splitting saddle} if a Jordan curve at $L_{c-\epsilon}$ splits into a pair of disjoint Jordan curves at $L_{c+\epsilon}$ as $t$ crosses $c$. A merging saddle is of \emph{nesting} type if the Jordan curves that merge at $L_{c + \epsilon}$ are nested at~$L_{c-\epsilon}$; otherwise, the saddle is of \emph{non-nesting} type. Similarly, we can define splitting saddles of nesting and non-nesting types. Figures~\ref{fig:crit-types}~-~\ref{fig:nesting4} illustrate the reasoning behind this terminology.

\begin{figure}[h]\centering
\begin{tikzpicture}[scale=.8]
\foreach \y in {-1.75,0,1.75}{
  \draw[opacity=.5,dotted] (-2.5,\y)--(10,\y);
}
\node at (0,-2.94) {(a)};
  \begin{scope}[shift={(-2,-1.75)}]
  \draw[barcinf] (0,1.75)--(0,3.5);
  \end{scope} 
\fill[newcol] (0,0) circle (.06);
\draw[levels] (0,1.75) ellipse (1.5 and .5);
\draw (-1.5,1.75) to [out=270,in=180] (0,0) to [out=0,in=270] (1.5,1.75);
\draw (0,1.75) ellipse (1.5 and .5);

\begin{scope}[shift={(5.5,0)}]
\node at (0,-3) {(b)};
  \begin{scope}[shift={(-2.75,-1.75)}]
  \draw[infb] (.2,0)--(.2,3.5);
  \draw[barcinfo] (-.2,1.75)--(-.2,0);
  \end{scope} 
\draw[levels] (1,-1.75) ellipse (.75 and .25);
\draw (.25,-1.75) arc (180:360:.75 and .25);
\draw[dashed] (.25,-1.75) arc (180:09:.75 and .25);
\draw[levels] (-1,-1.75) ellipse (.75 and .25);
\draw (-1.75,-1.75) arc (180:360:.75 and .25);
\draw[dashed] (-1.75,-1.75) arc (180:0:.75 and .25);
\draw[levels] (1,0) ellipse (1 and .3);
\draw (0,0) arc (180:360:1 and .3);
\draw[dashed] (0,0) arc (180:0:1 and .3);
\draw[levels] (-1,0) ellipse (1 and .3);
\draw (-2,0) arc (180:360:1 and .3);
\draw[dashed] (-2,0) arc (180:0:1 and .3);
\draw[levels] (0,1.75) ellipse (2 and .6);
\draw (0,1.75) ellipse (2 and .6);
\draw (-1.75,-1.75)--(-2,0)--(-2,1.75) (1.75,-1.75)--(2,0)--(2,1.75);
\draw (-.25,-1.75) .. controls +(85:1) and +(190:.2) .. (0,0) .. controls +(-10:.2) and +(95:1) .. (.25,-1.75);
\end{scope}
\begin{scope}[shift={(11.5,0)}]
\node at (0,-3) {(c)};
  \begin{scope}[shift={(-2.75,-1.75)}]
  \draw[infb] (.2,0)--(.2,3.5);
  \draw[barcinfo] (-.2,1.75)--(-.2,0);
  \end{scope}
\draw[levels] (0,-1.75) ellipse (2 and .6);
\draw[levels] (0,-1.75) ellipse (.75 and .25);
\draw (-2,-1.75) arc (180:360:2 and .6);
\draw[dashed] (-2,-1.75) arc (180:0:2 and .6);
\draw[dashed] (0,-1.75) ellipse (.75 and .25);
\draw[levels] (0,0) ellipse (2 and .6);
\draw (-2,0) arc (180:360:2 and .6);
\draw[dashed] (-2,0) arc (180:0:2 and .6);
\draw[levels] (1.1,0) ellipse (.9 and .3);
\draw[dashed] (1.1,0) ellipse (.9 and .3);
\draw[levels] (-.5,1.75) ellipse (1.5 and .5);
\draw (-.5,1.75) ellipse (1.5 and .5);
\draw (-2,-1.75)--(-2,1.75);
\draw (2,-1.75)--(2,0) .. controls +(90:.7) and +(270:1.5) .. (1,1.75);
\draw[dashed] (-.75,-1.75) to [out=90,in=270] (1,1.75);
\draw[dashed] (.75,-1.75) to [out=90,in=250] (2,0);
\end{scope}
\begin{scope}[shift={(-2.6,0)}]
\node[anchor=east] at (0,1.75) {$L_{c+\epsilon}$};
\node[anchor=east] at (0,0) {$L_{c}$};
\node[anchor=east] at (0,-1.75) {$L_{c-\epsilon}$};
\end{scope}
\end{tikzpicture}
\caption{A local minimum (a), a non-nesting (b) and a nesting (c) merging saddle
    with their corresponding (partial) zero-dimensional interlevel persistence barcodes.}
\label{fig:crit-types}
\end{figure}
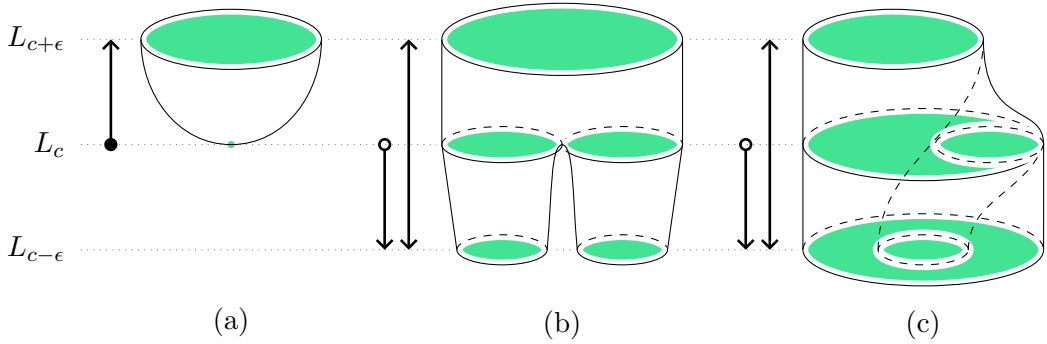

As before, let $\Gamma = L_t = \iota \circ f^{-1}(t)$ and $N_t = N(\Gamma) = (P, \leq_P)$ be the nesting poset. We now study how $N_t$ changes as $t\in \Rspace$ changes.

\begin{theorem}
\label{theorem:nesting-poset-A}
If $f$ has no critical values in the interval $[a,b]$, then $N_a$ and $N_b$ are poset isomorphic, that is, $N_a \cong N_b$.
\end{theorem}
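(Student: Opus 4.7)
The plan is to upgrade Theorem~\ref{theorem:CMT-A} from a diffeomorphism of sublevel sets to an isomorphism of the planar nesting data. The core observation is that since $f = \pi\circ\iota$ has no critical values on $[a,b]$, the preimage $\iota(f^{-1}([a,b]))$ is a disjoint union of smoothly embedded closed annuli (``tubes'') in $\Rspace^{2}\times[a,b]$, each meeting every horizontal slice $\pi^{-1}(t)$ transversely in a single Jordan curve. This tube structure will simultaneously produce the bijection on curves and force nesting to be preserved.

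First, I would apply the gradient flow of $f$ (or more directly, the fact that $f\colon f^{-1}([a,b])\to[a,b]$ is a proper submersion with no critical points, so by Ehresmann's theorem it is a fiber bundle over the interval) to obtain a diffeomorphism $f^{-1}([a,b]) \cong f^{-1}(a)\times[a,b]$ commuting with $f$. Composing with $\iota$ and restricting to each connected component of $f^{-1}(a)$ identifies the image as a disjoint family of tubes $T_{1},\dots,T_{m}\subset\Rspace^{2}\times[a,b]$, where each $T_{i}\cap\pi^{-1}(t) = \gamma_{i}(t)$ is a single Jordan curve in the plane $\pi^{-1}(t)$ depending continuously (indeed smoothly) on $t$. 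This immediately yields a bijection $\Phi\colon \pi_{0}(L_{a})\to\pi_{0}(L_{b})$ by $\gamma_{i}(a)\mapsto\gamma_{i}(b)$.

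The key step is to verify that $\Phi$ respects nesting. For any ordered pair $i\neq j$, define $\chi_{ij}(t)\in\{-1,0,+1\}$ to record whether $\gamma_{j}(t)\subset\interior(\gamma_{i}(t))$, whether neither interior contains the other, or whether $\gamma_{i}(t)\subset\interior(\gamma_{j}(t))$. Since the tubes $T_{i},T_{j}$ are disjoint, the curves $\gamma_{i}(t),\gamma_{j}(t)$ never meet. Continuity of the family together with the Jordan curve theorem (a continuous family of disjoint Jordan curves cannot change which side of one lies in the bounded component of the other without crossing) forces $\chi_{ij}$ to be locally constant, hence constant on the connected interval $[a,b]$. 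Thus the nesting pattern of the family $\{\gamma_{i}(a)\}$ agrees with that of $\{\gamma_{i}(b)\}$ under $\Phi$.

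Finally, I would upgrade the bijection on curves to a poset isomorphism $N_{a}\to N_{b}$. The components $P_{t} = \pi_{0}(\pi^{-1}(t)-L_{t})$ are in canonical bijection with $\pi_{0}(L_{t})\sqcup\{p_{0}\}$ via $\gamma_{i}(t)\mapsto p_{i}(t)$ (the component whose exterior boundary is $\gamma_{i}(t)$) together with the unbounded component $p_{0}(t)$; and, by definition, the covering relations of $N_{t}$ are read off entirely from the nesting pattern recorded by $\{\chi_{ij}(t)\}$. Extending $\Phi$ by sending $p_{i}(a)\mapsto p_{i}(b)$ and $p_{0}(a)\mapsto p_{0}(b)$, the constancy of each $\chi_{ij}$ established above shows that this map preserves and reflects the relation $\leq$ of Definition~\ref{def:levelset-nesting}, so $N_{a}\cong N_{b}$. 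The main obstacle is the nesting-preservation step; a subtle point to handle carefully is that the tubes may be knotted or linked in $\Rspace^{3}$ without any effect on the nesting of their horizontal slices, so the argument must rely purely on planar Jordan-curve topology within each slice, not on the ambient topology of the tubes.
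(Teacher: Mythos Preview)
Your proposal is correct and follows the same overall strategy as the paper: both invoke the product structure $f^{-1}([a,b]) \cong f^{-1}(a)\times[a,b]$ from classical Morse theory (the paper cites Matsumoto's Theorem~2.31; you phrase it via Ehresmann) to obtain a continuous family of Jordan curves indexed by $t\in[a,b]$, and then argue that nesting is preserved along this family. The paper compresses the nesting-preservation step into one sentence, asserting that the orientation-preserving diffeomorphism $h$ ``does not change the circle containment order moving from $N_a$ to $N_b$''; your explicit locally-constant indicator $\chi_{ij}(t)$ argument unpacks this step and is arguably what the paper's sentence is implicitly relying on, since $h$ as stated is a diffeomorphism of the surface piece rather than of the ambient plane.
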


\begin{proof}
This follows from a key observation in proving Theorem~\ref{theorem:CMT-A} from Morse theory.
Recall~\cite[Theorem 2.31]{Matsumoto1997}, that if $f$ has no critical values in the interval $[a, b]$, then $\Mspace_{[a,b]}:=\{x \in \Mspace \mid a \leq f(a) \leq b\}$ is diffeomorphic to the product $f^{-1}(a) \times [a,b]$. Using the gradient-like vector field for $f$, the proof of Theorem 2.31 in~\cite{Matsumoto1997} includes a construction of an orientation-preserving diffeomorphism $h\colon f^{-1}(a) \times [0, b-a] \to \Mspace_{[a,b]}$. Therefore, $\Mspace_{[a,b]}$ is diffeomorphic to $f^{-1}(a) \times [0, b-a]$ and thus also to $f^{-1}(a) \times [a,b]$.

However, we do not study $f$ directly, we instead study the function $\pi$ restricted to $\image(\iota)$ and sublevel sets $L_{[a,b]} \colonequals \iota \circ f^{-1}[a,b]$. Nonetheless, Theorem 2.31 from~\cite{Matsumoto1997} still applies, that is, there exists a diffeomorphism $h \colon L_a \times [0, b-a] \to L_{[a,b]}$ implying $L_{[a,b]} \cong L_{a} \times [0, b-a]$. The diffeomorphism $h$ is orientation-preserving, therefore it does not change the circle containment order moving from $N_a$ to $N_b$. Therefore $N_a \cong N_b$.
\qed
\end{proof}

Denote an injective map of posets by $\hookrightarrow$ and a surjective map of posets by $\twoheadrightarrow$. If the injective map happens to be an isomorphism, we write $\xhookrightarrow{\cong}$. The conditions on $f\colon \Sspace^2\to \Rspace$ are as above.

\begin{theorem}
\label{theorem:nesting-poset-B}
Let $p$ be a critical point of $f$ with critical value $c \colonequals f(p)$. Then for $\epsilon$ small enough, there exist zigzags of poset maps:
\begin{enumerate}
\item\label{thm:nesting1} $N_{c-\epsilon} \xhookleftarrow \cong N_c \hookrightarrow N_{c+\epsilon}$ if $p$ is a local minimum;
\item\label{thm:nesting2} $N_{c-\epsilon} \hookleftarrow N_c \xhookrightarrow \cong N_{c+\epsilon}$ if $p$ is a local maximum;
\item\label{thm:nesting3} $N_{c-\epsilon} \xhookleftarrow \cong N_c \twoheadrightarrow N_{c+\epsilon}$ if $p$ is a non-nesting merging saddle;
\item\label{thm:nesting4} $N_{c-\epsilon} \twoheadleftarrow N_c \xhookrightarrow \cong N_{c+\epsilon}$ if $p$ is a non-nesting splitting saddle;
\item\label{thm:nesting5} $N_{c-\epsilon} \xhookleftarrow \cong N_c \hookleftarrow N_{c+\epsilon}$ if $p$ is a nesting merging saddle;
\item\label{thm:nesting6} $N_{c-\epsilon} \hookrightarrow N_c \xhookrightarrow \cong N_{c+\epsilon}$ if $p$ is a nesting splitting saddle.
\end{enumerate}
\end{theorem}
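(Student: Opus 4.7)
My plan is to prove this by a six-way case analysis on the index of the critical point $p$ and, for saddles, on the nesting type. The overall strategy is to combine two ingredients in each case: (i) a purely local description of how $L_t$ changes near $p$ as $t$ varies across $c$, using the Morse normal form; and (ii) a global identification of components away from $p$ using the orientation-preserving diffeomorphism constructed in the proof of Theorem~\ref{theorem:nesting-poset-A}, applied to the cylinder obtained by removing a small neighborhood of $p$ from the relevant slab.

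\textbf{Local picture.} In a Morse chart around $p$, the slice $f^{-1}(c)$ near $p$ is either an isolated point (index $0$ or $2$) or a transverse ``X'' of arcs (index $1$). For a local minimum, the point at $c$ does not separate the plane, so $N_c$ and $N_{c-\epsilon}$ agree locally, while at $c+\epsilon$ a small Jordan curve $\gamma_p$ is carved out of the region of $N_c$ that contained $p$, creating one new bounded component; the local maximum case is symmetric. For a saddle, the transverse X at $c$ admits two resolutions (one per sign of the hyperbolic value), corresponding to $c-\epsilon$ and $c+\epsilon$. Whether the resolution at $c-\epsilon$ produces one circle or two, and whether those two are nested in the plane, partitions the saddle cases into~\ref{thm:nesting3}--\ref{thm:nesting6}.

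\textbf{Defining the maps.} Choose a small open ball $B$ around $p$ contained in a Morse chart. Outside $B$, Theorem~\ref{theorem:nesting-poset-A} applied to the slab minus $\{p\}$ provides an orientation-preserving diffeomorphism identifying components of the level-set complements across $a \in \{c-\epsilon,c,c+\epsilon\}$; inside $B$ the behavior is dictated by the local picture. In cases~\ref{thm:nesting1} and~\ref{thm:nesting2}, exactly one bounded component is introduced or removed near $p$, producing the iso on one side and the one-element-larger injection on the other. In the non-nesting saddle cases~\ref{thm:nesting3} and~\ref{thm:nesting4}, two local components of $\pi^{-1}(c)-L_c$ get identified after the resolution at $c+\epsilon$ (respectively, one splits into two at $c-\epsilon$), which produces a surjection that is two-to-one on exactly one pair and a bijection elsewhere. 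In the nesting saddle cases~\ref{thm:nesting5} and~\ref{thm:nesting6}, the annular component sandwiched between the two nested curves at $c-\epsilon$ has no continuation past $p$: the resolution at $c+\epsilon$ merges the two circles in such a way that the annulus is pinched out, so $N_{c+\epsilon}$ embeds into $N_c$ by hitting every element except this annulus. Order preservation for each map follows from the fact that the flow-based identification preserves adjacency and containment of components, supplemented by a direct check of the local nesting relations in the chart.

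\textbf{Main obstacle.} I expect the most delicate step to be the verification in the nesting cases~\ref{thm:nesting5} and~\ref{thm:nesting6}. In the non-nesting saddle, the two relevant circles can be realized inside a contractible disk where nesting is vacuous, so nothing global needs to be tracked; but in the nesting case the outer curve is genuinely global, and one must argue that the ``annular'' region is truly the unique element of $N_c$ with no image in $N_{c+\epsilon}$, and that the other two regions inject in the order-preserving way rather than swapping. I plan to handle this by tracking, for each of the four arcs of the X at $p$, which side of the local transverse resolution it borders, and using the Jordan curve theorem on the extended arcs to certify that only the annular region is eliminated while the ``innermost'' and ``unbounded'' regions correctly correspond to the inside and outside of the merged curve.
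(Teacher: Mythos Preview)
Your overall strategy---Morse normal form for the local picture near $p$, combined with the product-structure diffeomorphism of Theorem~\ref{theorem:nesting-poset-A} away from $p$---is exactly the route the paper takes, and your treatment of cases~\ref{thm:nesting1}--\ref{thm:nesting4} is correct in outline.

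There is, however, a genuine error in your analysis of the nesting saddle cases~\ref{thm:nesting5} and~\ref{thm:nesting6}. You assert that the annular component sandwiched between the two nested curves is the element of $N_c$ missed by the injection $N_{c+\epsilon}\hookrightarrow N_c$. In fact it is the \emph{innermost} region that is lost: under the resolution at $c+\epsilon$ the innermost region merges with the outermost one, while the annulus survives as the interior of the single remaining circle. A quick sanity check is the inside/outside alternation across the embedded sphere $\iota(\Sspace^2)$: adjacent planar regions lie on opposite sides of the sphere, so if the unbounded region $A$ is outside the sphere, then the annulus $B$ is inside and the innermost region $C$ is again outside. At level $c+\epsilon$ there are only two planar regions, one inside the sphere and one outside; the inside one must continue $B$, and the outside one is the merger of $A$ and $C$ through the three-dimensional ambient near $p$. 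Consequently, the flow-based identification you propose will in fact send the interior of the single circle to the annulus $B'$ and leave $C'$ unmatched---the opposite of what you anticipate. Your plan to track the four arcs of the X is the right instinct and would reveal this if carried out carefully, but the specific claim you set out to prove (``the annular region is truly the unique element with no image'') is false, and an argument aimed at it would not close. The paper handles precisely this subtlety by enlarging the local neighborhood so that it contains complete Jordan curves before labelling components, which makes the correct correspondence visible.
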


As the behavior of nesting posets for local maxima and nesting/non-nesting splitting saddles of $f$ is the same as local minima and nesting/non-nesting merging saddles, respectively, of the Morse function $-f$, we only prove Statements \ref{thm:nesting1}, \ref{thm:nesting3}, and \ref{thm:nesting5} in Theorem~\ref{theorem:nesting-poset-B}.

\begin{proof}

By the Morse Lemma, there exists a neighborhood $V$ of $p$ such that on $V$, $f(x) = f(p) \pm x_1^2 \pm x_2^2$. The function $f$ is excellent, so there exists some $\epsilon > 0$ such that $f^{-1}[c-\epsilon, c+ \epsilon]$ contains only one critical point of $f$, namely $p$. Let $U\colonequals f^{-1}[c-\epsilon,c+\epsilon] \cap V$. Then $\nabla f$ provides a diffeomorphism from $f^{-1}(c-\epsilon) \setminus U$ to $f^{-1}(c+\epsilon) \setminus U$. By Theorem~\ref{theorem:nesting-poset-A}, the nesting poset is unchanged outside of $U$, hence the proof is reduced to a local computation of the nesting poset of $U$ for each case.

For every $t\in [c-\epsilon,c+\epsilon]$, let $L_t \colonequals \iota(f^{-1}(t) \cap V)$ and let $L \colonequals \bigcup_{t\in [c-\epsilon,c+\epsilon]}L_t$. Take a contractible neighborhood $W$ of $L$, and without loss of generality assume that $L\subseteq \pi^{-1}[c-\epsilon,c+\epsilon]$ and $W_t \colonequals \pi^{-1}(t)\cap W$ is non-empty and contractible for every $t\in [c-\epsilon,c+\epsilon]$. Note that $\iota\circ f^{-1}[c-\epsilon,c+\epsilon]$ may contain more than one connected component. This proof proceeds functorially, by describing for every $t\leqslant s\in [c-\epsilon,c+\epsilon]$ a map of posets $N_t\to N_s$ induced by the topological inclusions $W_t-L_t\hookrightarrow (W-L)\cap \pi^{-1}[t,s]$ and $W_s-L_s\hookrightarrow (W-L)\cap \pi^{-1}[t,s]$.

For Statement~\ref{thm:nesting1}, the function $f$ has index $0$ and so $f(x)=f(p)+x_1^2+x_2^2$ in $V$. As in Figure~\ref{fig:nesting1}, assign labels to the connected components of the three (subsets of) level sets $W_{c-\epsilon}\subseteq \pi^{-1}(c-\epsilon)$, $W_c-L_c\subseteq \pi^{-1}(c)$, and $W_{c+\epsilon}-L_{c+\epsilon}\subseteq \pi^{-1}(c+\epsilon)$. The topological inclusion $A'\hookrightarrow A$ and the natural injective map $A'\hookrightarrow A''$ that widens the hole of $A'$ induce analogous maps on the nesting posets $N_c\to N_{c-\epsilon}$ and $N_c\to N_{c+\epsilon}$, respectively. The poset elements are given the same labels as the connected components to which they correspond, and their relation is defined in Definition \ref{def:levelset-nesting}.

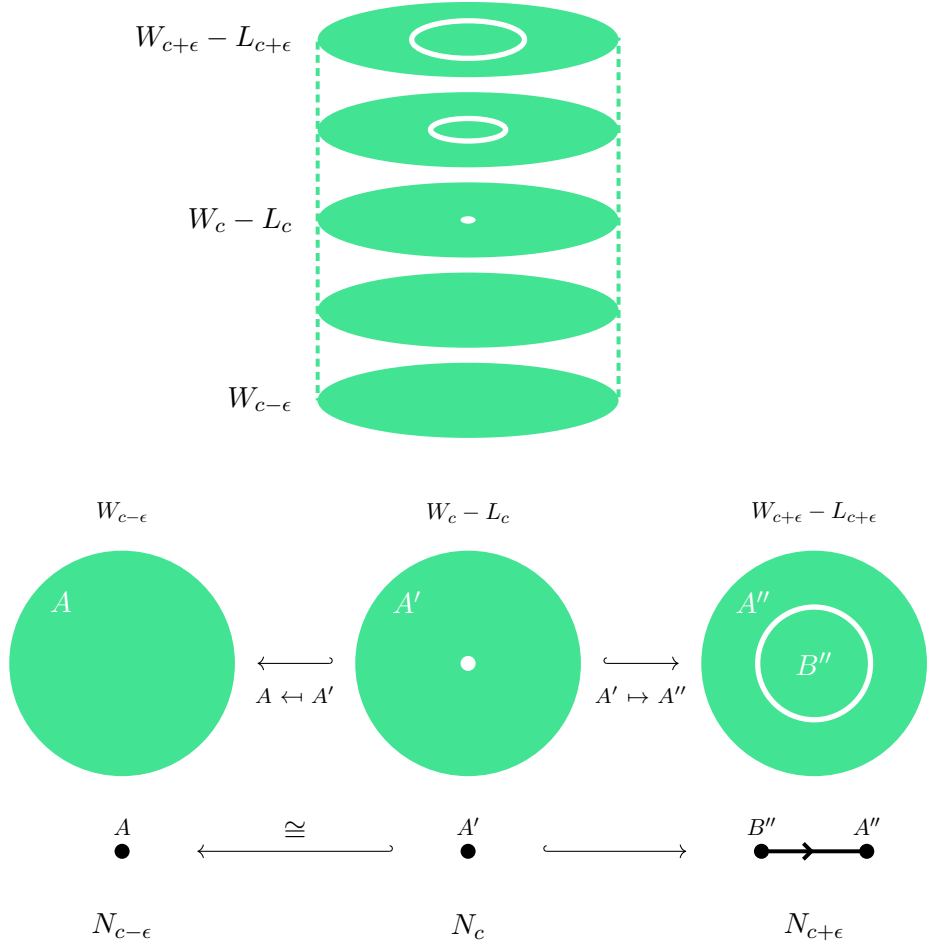
\begin{figure}[!h]\centering
\begin{tikzpicture}[scale=1,
  circ/.style={line width=2pt,draw=white},
  ldisk/.style={fill=newcol},
  tmap/.style={shorten <=1.8cm, shorten >=1.8cm},
  gmap/.style={shorten <=1cm, shorten >=1cm},
  gnode/.style={anchor=south,yshift=3pt,scale=.8}]

\begin{scope}[shift={(4.6,3.5)}]
\draw[newcol,densely dashed,line width=1.5pt] (-2,0) -- (-2,4*\levelspacer) -- (2,4*\levelspacer) -- (2,0) -- cycle;
\node[anchor=east] at (-2.2,0) {$W_{c-\epsilon}$};
\fill[ldisk] (0,0) ellipse (2 and .5);
\fill[ldisk] (0,\levelspacer) ellipse (2 and .5);
\node[anchor=east] at (-2.2,2*\levelspacer) {$W_{c} - L_{c}$};
\fill[ldisk] (0,2*\levelspacer) ellipse (2 and .5);
\fill[white] (0,2*\levelspacer) ellipse (.1 and .05);
\fill[ldisk] (0,3*\levelspacer) ellipse (2 and .5);
\draw[circ] (0,3*\levelspacer) ellipse (.5 and .15);
\node[anchor=east] at (-2.2,4*\levelspacer) {$W_{c+\epsilon} - L_{c+\epsilon}$};
\fill[ldisk] (0,4*\levelspacer) ellipse (2 and .5);
\draw[circ] (0,4*\levelspacer) ellipse (.75 and .25);
\end{scope}
\coordinate (a) at (0,0);
\fill[newcol] (0,0) circle (1.5);
\node[white] at ($(0,0)+(135:1.15)$) {$A$};
\node[scale=.8] at (0,2) {$W_{c-\epsilon}$};
\begin{scope}[shift={(0,-2.5)}]
\coordinate (A) at (0,0);
\fill (A) circle (.1);
\node[gnode] at (A) {$A$};
\node at (0,-1) {$N_{c-\epsilon}$};
\end{scope}
\begin{scope}[shift={(4.6,0)}]
\coordinate (b) at (0,0);
\fill[newcol] (0,0) circle (1.5);
\node[white] at ($(0,0)+(135:1.15)$) {$A'$};
\fill[white] (0,0) circle (.1);
\node[scale=.8] at (0,2) {$W_{c} - L_{c}$};
\begin{scope}[shift={(0,-2.5)}]
\coordinate (B) at (0,0);
\fill (B) circle (.1);
\node[gnode] at (B) {$A'$};
\node at (0,-1) {$N_{c}$};
\end{scope}
\end{scope}
\begin{scope}[shift={(9.2,0)}]
\coordinate (c) at (0,0);
\fill[newcol] (0,0) circle (1.5);
\node[white] at ($(0,0)+(135:1.15)$) {$A''$};
\draw[circ] (0,0) circle (.75);
\node[white] at (0,0) {$B''$};
\node[scale=.8] at (0,2) {$W_{c+\epsilon} - L_{c+\epsilon}$};
\begin{scope}[shift={(0,-2.5)}]
\coordinate (bpp) at (-.7,0);
\coordinate (app) at (.7,0);
\fill (bpp) circle (.1);
\fill (app) circle (.1);
\node[gnode] at (bpp) {$B''$};
\node[gnode] at (app) {$A''$};
\draw[conx] (bpp)--(app);
\node at (0,-1) {$N_{c+\epsilon}$};
\end{scope}
\end{scope}
\draw[tmap,<-{Hooks[left]}] (a) to node[below=5pt] {$\begin{smallmatrix} A & \mapsfrom & A' \end{smallmatrix}$} (b);
\draw[tmap,{Hooks[right]}->] (b) to node[below=5pt] {$\begin{smallmatrix} A' & \mapsto & A'' \end{smallmatrix}$} (c);
\draw[gmap,<-{Hooks[left]}] (A) to node[above] {$\cong$} (B);
\draw[gmap,{Hooks[right]}->] (B) -- (bpp);
\end{tikzpicture}
\caption{Topological construction and labelling for Statement~\ref{thm:nesting1}.}
\label{fig:nesting1}
\end{figure}

For Statements~\ref{thm:nesting3} and \ref{thm:nesting5}, the function $f$ has index 1 and so $f(x)=f(p)-x_1^2+x_2^2$ up to diffeomorphism. As in Figure \ref{fig:nesting2}, assigning labels coherently to the connected components of the three (subsets of) level sets is ambiguous, as some may connect beyond $W$. It is necessary to clarify this, as we want an injective map from the nesting poset constructed from $W_t-L_t$ to the nesting poset constructed from $\pi^{-1}(t)-\iota(f^{-1}(t))$.

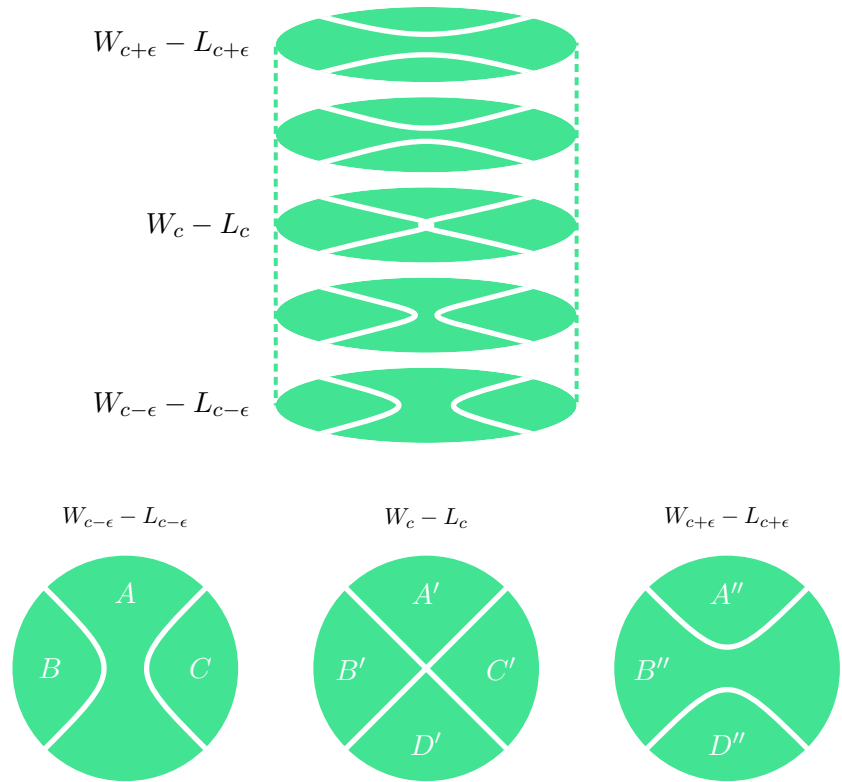
\begin{figure}[!h]\centering
\begin{tikzpicture}[scale=1,
  circ/.style={line width=2pt,draw=white},
  ldisk/.style={fill=newcol}]

\begin{scope}[shift={(4,3.5)}]
\draw[newcol,densely dashed,line width=1.5pt] (-2,0) -- (-2,4*\levelspacer) -- (2,4*\levelspacer) -- (2,0) -- cycle;
\node[anchor=east] at (-2.2,0) {$W_{c-\epsilon}-L_{c-\epsilon}$};
\begin{scope}
\clip (0,0) ellipse (2 and .5);
\fill[ldisk] (0,0) ellipse (2 and .5);
\draw[circ] ($(0,0)+(-15:1.5)$) .. controls +(165:1.5) and +(195:1.5) .. ($(0,0)+(15:1.5)$);
\draw[circ,xscale=-1] ($(0,0)+(-15:1.5)$) .. controls +(165:1.5) and +(195:1.5) .. ($(0,0)+(15:1.5)$);
\end{scope}
\begin{scope}[shift={(0,\levelspacer)}]
\clip (0,0) ellipse (2 and .5);
\fill[ldisk] (0,0) ellipse (2 and .5);
\draw[circ] ($(0,0)+(-15:1.5)$) .. controls +(165:1.8) and +(195:1.8) .. ($(0,0)+(15:1.5)$);
\draw[circ,xscale=-1] ($(0,0)+(-15:1.5)$) .. controls +(165:1.8) and +(195:1.8) .. ($(0,0)+(15:1.5)$);
\end{scope}
\node[anchor=east] at (-2.2,2*\levelspacer) {$W_{c} - L_{c}$};
\begin{scope}[shift={(0,2*\levelspacer)}]
\clip (0,0) ellipse (2 and .5);
\fill[ldisk] (0,0) ellipse (2 and .5);
\draw[circ] ($(0,0)+(-15:1.5)$) .. controls +(165:2.1) and +(195:2.1) .. ($(0,0)+(15:1.5)$);
\draw[circ,xscale=-1] ($(0,0)+(-15:1.5)$) .. controls +(165:2.1) and +(195:2.1) .. ($(0,0)+(15:1.5)$);
\end{scope}
\begin{scope}[shift={(0,3*\levelspacer)}]
\clip (0,0) ellipse (2 and .5);
\fill[ldisk] (0,0) ellipse (2 and .5);
\draw[circ] ($(0,0)+(-15:1.5)$) .. controls +(165:1.55) and +(15:1.55) .. ($(0,0)+(195:1.5)$);
\draw[circ,yscale=-1] ($(0,0)+(-15:1.5)$) .. controls +(165:1.55) and +(15:1.55) .. ($(0,0)+(195:1.5)$);
\end{scope}
\node[anchor=east] at (-2.2,4*\levelspacer) {$W_{c+\epsilon} - L_{c+\epsilon}$};
\begin{scope}[shift={(0,4*\levelspacer)}]
\clip (0,0) ellipse (2 and .5);
\fill[ldisk] (0,0) ellipse (2 and .5);
\draw[circ] ($(0,0)+(-15:1.5)$) .. controls +(165:1.3) and +(15:1.3) .. ($(0,0)+(195:1.5)$);
\draw[circ,yscale=-1] ($(0,0)+(-15:1.5)$) .. controls +(165:1.3) and +(15:1.3) .. ($(0,0)+(195:1.5)$);
\end{scope}
\end{scope}
\coordinate (a) at (0,0);
\fill[newcol] (0,0) circle (1.5);
\node[white] at (0,1) {$A$};
\node[white] at (-1,0) {$B$};
\node[white] at (1,0) {$C$};
\draw[circ] ($(0,0)+(-45:1.6)$) .. controls +(135:1.6) and +(225:1.6) .. ($(0,0)+(45:1.6)$);
\draw[circ,xscale=-1] ($(0,0)+(-45:1.6)$) .. controls +(135:1.6) and +(225:1.6) .. ($(0,0)+(45:1.6)$);
\node[scale=.8] at (0,2) {$W_{c-\epsilon}-L_{c-\epsilon}$};
\begin{scope}[shift={(4,0)}]
\coordinate (b) at (0,0);
\fill[newcol] (0,0) circle (1.5);
\node[white] at (0,1) {$A'$};
\node[white] at (0,-1) {$D'$};
\node[white] at (-1,0) {$B'$};
\node[white] at (1,0) {$C'$};
\draw[circ] (1.5,1.5)--(-1.5,-1.5) (1.5,-1.5)--(-1.5,1.5);
\node[scale=.8] at (0,2) {$W_{c} - L_{c}$};
\end{scope}
\begin{scope}[shift={(8,0)}]
\coordinate (c) at (0,0);
\fill[newcol] (0,0) circle (1.5);
\node[white] at (0,1) {$A''$};
\node[white] at (-1,0) {$B''$};
\node[white] at (0,-1) {$D''$};
\draw[circ] ($(0,0)+(-45:1.6)$) .. controls +(135:1.6) and +(45:1.6) .. ($(0,0)+(225:1.6)$);
\draw[circ,yscale=-1] ($(0,0)+(-45:1.6)$) .. controls +(135:1.6) and +(45:1.6) .. ($(0,0)+(225:1.6)$);
\node[scale=.8] at (0,2) {$W_{c+\epsilon} - L_{c+\epsilon}$};
\end{scope}
\end{tikzpicture}
\caption{Ambiguity in coherent component labeling of (subsets of) level sets near a saddle.}
\label{fig:nesting2}
\end{figure}

To resolve this, take a larger neighborhood $W'\supseteq W$ so that
$W_t'\colonequals W'\cap \pi^{-1}(t)$ contains some connected components of
the embedded 1-manifold $\iota(f^{-1}(t))$, for every $t\in
[c-\epsilon,c+\epsilon]$ a regular value (and contains an embedded
$\Sspace^1\vee \Sspace^1$ for $t$ a critical value). There are 4 unique
embeddings, up to diffeomorphism, as shown in Figure \ref{fig:nesting-ALL},
among which Figure \ref{fig:nesting-ALL}(a) corresponds to
Statement~\ref{thm:nesting3} and Figure \ref{fig:nesting-ALL}(c) corresponds
to Statement~\ref{thm:nesting5}.

For Statement~\ref{thm:nesting3}, as in Figure \ref{fig:nesting3}, assign labels to the connected components of the (subsets of) level sets $W_{c-\epsilon}'-\iota(f^{-1}(c-\epsilon))$, $W_c'-\iota(f^{-1}(c))$, and $W_{c+\epsilon}'-\iota(f^{-1}(c+\epsilon))$. The topological inclusions $A'\hookrightarrow A$, $B'\xhookrightarrow{\cong} B$, and $C'\xhookrightarrow{\cong} C$ induce an analogous nesting poset $N_c\to N_{c-\epsilon}$. Similarly, the topological surjections $A'\xtwoheadrightarrow{\cong}A''$, $B'\xtwoheadrightarrow{\cong}B''$, and $C'\xtwoheadrightarrow{\cong}B''$ induce an analogous nesting poset map $N_c\to N_{c+\epsilon}$.

\begin{figure}[!h]\centering
\begin{tikzpicture}[scale=1,
  circ/.style={line width=2pt,draw=white},
  ldisk/.style={fill=newcol,opacity=.8},
  tmap/.style={shorten <=1.8cm, shorten >=1.8cm},
  gmap/.style={shorten <=1cm, shorten >=1cm},
  gnode/.style={anchor=south,yshift=3pt,scale=.8},
  gnoder/.style={anchor=west,xshift=3pt,scale=.8},
  gnodel/.style={anchor=east,xshift=-3pt,scale=.8}]
\coordinate (a) at (0,0);
\fill[newcol] (0,0) circle (1.5);
\node[white] at (0,1) {$A$};
\node[white] at (-.7,0) {$B$};
\node[white] at (.7,0) {$C$};
\begin{scope}[scale=.5]
\draw[circ] ($(0,0)+(-45:1.6)$) .. controls +(135:1.6) and +(225:1.6) .. ($(0,0)+(45:1.6)$);
\draw[circ,xscale=-1] ($(0,0)+(-45:1.6)$) .. controls +(135:1.6) and +(225:1.6) .. ($(0,0)+(45:1.6)$);
\draw[circ] ($(0,0)+(45:1.6)$) .. controls +(45:2) and +(-45:2) .. ($(0,0)+(-45:1.6)$);
\draw[circ,xscale=-1] ($(0,0)+(45:1.6)$) .. controls +(45:2) and +(-45:2) .. ($(0,0)+(-45:1.6)$);
\end{scope}
\node at (0,1.9) {$W_{c-\epsilon}'-\iota(f^{-1}(c-\epsilon))$};
\begin{scope}[shift={(0,-2.3)}]
\coordinate (A) at (.5,0);
\coordinate (Ab) at (-.5,.4);
\coordinate (Ac) at (-.5,-.4);
\fill (Ab) circle (.08);
\fill (Ac) circle (.08);
\draw[conx] (Ab)--(A);
\draw[conx] (Ac)--(A);
\fill (A) circle (.08);
\node[gnoder] at (A) {$A$};
\node[gnodel] at (Ab) {$B$};
\node[gnodel] at (Ac) {$C$};
\node at (0,-1) {$N_{c-\epsilon}$};
\end{scope}
\begin{scope}[shift={(5,0)}]
\coordinate (b) at (0,0);
\fill[newcol] (0,0) circle (1.5);
\node[white] at (0,1) {$A'$};
\node[white] at (-.7,0) {$B'$};
\node[white] at (.7,0) {$C'$};
\begin{scope}[scale=.5]
\draw[circ] ($(0,0)+(-45:1.6)$) -- ++(135:3.2);
\draw[circ,xscale=-1] ($(0,0)+(-45:1.6)$) -- ++(135:3.2);
\draw[circ] ($(0,0)+(45:1.6)$) .. controls +(45:2) and +(-45:2) .. ($(0,0)+(-45:1.6)$);
\draw[circ,xscale=-1] ($(0,0)+(45:1.6)$) .. controls +(45:2) and +(-45:2) .. ($(0,0)+(-45:1.6)$);
\end{scope}
\node at (0,1.9) {$W_{c}' - \iota(f^{-1}(c))$};
\begin{scope}[shift={(0,-2.3)}]
\coordinate (B) at (.5,0);
\coordinate (Bb) at (-.5,.4);
\coordinate (Bc) at (-.5,-.4);
\fill (Bb) circle (.08);
\fill (Bc) circle (.08);
\draw[conx] (Bb)--(B);
\draw[conx] (Bc)--(B);
\fill (B) circle (.08);
\node[gnoder] at (B) {$A'$};
\node[gnodel] at (Bb) {$B'$};
\node[gnodel] at (Bc) {$C'$};
\node at (0,-1) {$N_{c}$};
\end{scope}
\end{scope}
\begin{scope}[shift={(10,0)}]
\coordinate (c) at (0,0);
\fill[newcol] (0,0) circle (1.5);
\node[white] at (0,1) {$A''$};
\node[white] at (-.7,0) {$B''$};
\begin{scope}[scale=.5]
\draw[circ] ($(0,0)+(-45:1.6)$) .. controls +(135:1.6) and +(45:1.6) .. ($(0,0)+(225:1.6)$);
\draw[circ,yscale=-1] ($(0,0)+(-45:1.6)$) .. controls +(135:1.6) and +(45:1.6) .. ($(0,0)+(225:1.6)$);
\draw[circ] ($(0,0)+(45:1.6)$) .. controls +(45:2) and +(-45:2) .. ($(0,0)+(-45:1.6)$);
\draw[circ,xscale=-1] ($(0,0)+(45:1.6)$) .. controls +(45:2) and +(-45:2) .. ($(0,0)+(-45:1.6)$);
\end{scope}
\node at (0,1.9) {$W_{c+\epsilon}' - \iota(f^{-1}(c+\epsilon))$};
\begin{scope}[shift={(0,-2.3)}]
\coordinate (bpp) at (-.7,0);
\coordinate (app) at (.7,0);
\fill (bpp) circle (.08);
\fill (app) circle (.08);
\node[gnode] at (bpp) {$B''$};
\node[gnode] at (app) {$A''$};
\draw[conx] (bpp)--(app);
\node at (0,-1) {$N_{c+\epsilon}$};
\end{scope}
\end{scope}
\draw[tmap,<-{Hooks[left]}] (a) to node[below=5pt] {$\begin{smallmatrix} A & \mapsfrom & A' \\ B & \mapsfrom & B' \\ C & \mapsfrom & C' \end{smallmatrix}$} (b);
\draw[tmap,->>] (b) to node[below=5pt] {$\begin{smallmatrix} A' & \mapsto & A'' \\ B' & \mapsto & B'' \\ C' & \mapsto & B'' \end{smallmatrix}$} (c);
\draw[gmap,<-{Hooks[left]}] (A) to node[above] {$\cong$} ($(Bb)!.5!(Bc)$);
\draw[gmap,->>] (B) -- (bpp);
\end{tikzpicture}
\caption{Topological construction and labelling for Statement~\ref{thm:nesting3}.}
\label{fig:nesting3}
\end{figure}

\begin{figure}[!ht]\centering
\begin{tikzpicture}[scale=1,
  circ/.style={line width=2pt,draw=white},
  ldisk/.style={fill=newcol,opacity=.8},
  tmap/.style={shorten <=1.8cm, shorten >=1.8cm},
  gmap/.style={shorten <=.8cm, shorten >=.8cm},
  gnode/.style={anchor=south,yshift=3pt,scale=.8},
  gnoder/.style={anchor=west,xshift=3pt,scale=.8},
  gnodel/.style={anchor=east,xshift=-3pt,scale=.8}]
\coordinate (a) at (0,0);
\fill[newcol] (0,0) circle (1.5);
\node[white] at (0,.8) {$B$};
\node[white] at (-.9,0) {$A$};
\node[white] at (.6,0) {$C$};
\begin{scope}[scale=.5]
\draw[circ] ($(0,0)+(-45:1.6)$) .. controls +(135:1.6) and +(225:1.6) .. ($(0,0)+(45:1.6)$);
\draw[circ,xscale=-1] ($(0,0)+(-45:1.6)$) .. controls +(135:1.6) and +(225:1.6) .. ($(0,0)+(45:1.6)$);
\draw[circ] ($(0,0)+(45:1.6)$) .. controls +(45:1.5) and +(-45:1.5) .. ($(0,0)+(-45:1.6)$);
\draw[circ] ($(0,0)+(225:1.6)$) .. controls +(225:1) and +(180:1) .. (0,-2.5) arc (-90:90:2.5) .. controls +(180:1) and +(135:1) .. ($(0,0)+(135:1.6)$);
\end{scope}
\node at (0,1.9) {$W_{c-\epsilon}'-\iota(f^{-1}(c-\epsilon))$};
\begin{scope}[shift={(0,-2.3)}]
\coordinate (A) at (1,0);
\coordinate (Ab) at (0,0);
\coordinate (Ac) at (-1,0);
\fill (A) circle (.08);
\fill (Ab) circle (.08);
\fill (Ac) circle (.08);
\draw[conx] (Ac)--(Ab);
\draw[conx] (Ab)--(A);
\node[gnode] at (A) {$A$};
\node[gnode] at (Ab) {$B$};
\node[gnode] at (Ac) {$C$};
\node at (0,-1) {$N_{c-\epsilon}$};
\end{scope}
\begin{scope}[shift={(5,0)}]
\coordinate (b) at (0,0);
\fill[newcol] (0,0) circle (1.5);
\node[white] at (0,.8) {$B'$};
\node[white] at (-.9,0) {$A'$};
\node[white] at (.6,0) {$C'$};
\begin{scope}[scale=.5]
\draw[circ] ($(0,0)+(-45:1.6)$) -- ++(135:3.2);
\draw[circ,xscale=-1] ($(0,0)+(-45:1.6)$) -- ++(135:3.2);
\draw[circ] ($(0,0)+(45:1.6)$) .. controls +(45:1.5) and +(-45:1.5) .. ($(0,0)+(-45:1.6)$);
\draw[circ] ($(0,0)+(225:1.6)$) .. controls +(225:1) and +(180:1) .. (0,-2.5) arc (-90:90:2.5) .. controls +(180:1) and +(135:1) .. ($(0,0)+(135:1.6)$);
\end{scope}
\node at (0,1.9) {$W_{c}' - \iota(f^{-1}(c))$};
\begin{scope}[shift={(0,-2.3)}]
\coordinate (B) at (1,0);
\coordinate (Bb) at (0,0);
\coordinate (Bc) at (-1,0);
\fill (B) circle (.08);
\fill (Bb) circle (.08);
\fill (Bc) circle (.08);
\draw[conx] (Bc)--(Bb);
\draw[conx] (Bb)--(B);
\node[gnode] at (B) {$A'$};
\node[gnode] at (Bb) {$B'$};
\node[gnode] at (Bc) {$C'$};
\node at (0,-1) {$N_{c}$};
\end{scope}
\end{scope}
\begin{scope}[shift={(10,0)}]
\coordinate (c) at (0,0);
\fill[newcol] (0,0) circle (1.5);
\node[white] at (0,.8) {$B''$};
\node[white] at (-.9,0) {$A''$};
\begin{scope}[scale=.5]
\draw[circ] ($(0,0)+(-45:1.6)$) .. controls +(135:1.6) and +(45:1.6) .. ($(0,0)+(225:1.6)$);
\draw[circ,yscale=-1] ($(0,0)+(-45:1.6)$) .. controls +(135:1.6) and +(45:1.6) .. ($(0,0)+(225:1.6)$);
\draw[circ] ($(0,0)+(45:1.6)$) .. controls +(45:1.5) and +(-45:1.5) .. ($(0,0)+(-45:1.6)$);
\draw[circ] ($(0,0)+(225:1.6)$) .. controls +(225:1) and +(180:1) .. (0,-2.5) arc (-90:90:2.5) .. controls +(180:1) and +(135:1) .. ($(0,0)+(135:1.6)$);
\end{scope}
\node at (0,1.9) {$W_{c+\epsilon}' - \iota(f^{-1}(c+\epsilon))$};
\begin{scope}[shift={(0,-2.3)}]
\coordinate (bpp) at (-.7,0);
\coordinate (app) at (.7,0);
\fill (bpp) circle (.08);
\fill (app) circle (.08);
\node[gnode] at (bpp) {$B''$};
\node[gnode] at (app) {$A''$};
\draw[conx] (bpp)--(app);
\node at (0,-1) {$N_{c+\epsilon}$};
\end{scope}
\end{scope}
\draw[tmap,<-{Hooks[left]}] (a) to node[below=5pt] {$\begin{smallmatrix} A & \mapsfrom & A' \\ B & \mapsfrom & B' \\ C & \mapsfrom & C' \end{smallmatrix}$} (b);
\draw[tmap,<-{Hooks[left]}] (b) to node[below=5pt] {$\begin{smallmatrix} A' & \mapsfrom & A'' \\ B' & \mapsfrom & B'' \end{smallmatrix}$} (c);
\draw[gmap,<-{Hooks[left]}] (A) to node[above] {$\cong$} (Bc);
\draw[gmap,<-{Hooks[left]}] (B) -- (bpp);
\end{tikzpicture}
\caption{Topological construction and labelling for Statement~\ref{thm:nesting5}.}
\label{fig:nesting4}
\end{figure}

For Statement~\ref{thm:nesting5}, as in Figure \ref{fig:nesting4}, assign labels to the connected components of the (subsets of) level sets $W_{c-\epsilon}'-\iota(f^{-1}(c-\epsilon))$, $W_c'-\iota(f^{-1}(c))$, and $W_{c+\epsilon}'-\iota(f^{-1}(c+\epsilon))$. The topological inclusions $A'\hookrightarrow A$, $B'\xhookrightarrow{\cong} B$, and $C'\xhookrightarrow{\cong} C$ induce an analogous nesting poset $N_c\to N_{c-\epsilon}$. Similarly, the topological inclusions $A''\xhookrightarrow{\cong}A'$ and $B''\xhookrightarrow{\cong}B'$ induce an analogous nesting poset map $N_{c+\epsilon}\to N_c$.
\qed
\end{proof}

Note that the maps in Statements~\ref{thm:nesting1}~-~\ref{thm:nesting4}
all have a zigzag structure $N_{c-\epsilon} \leftarrow N_c \rightarrow N_{c+\epsilon}$, but Statements ~\ref{thm:nesting5} and~\ref{thm:nesting6} do not. One may enforce such maps on Statements~\ref{thm:nesting5} and~\ref{thm:nesting6}, and following Figure \ref{fig:nesting4} we have two choices. One choice is to map $C'$ and $A'$ both to $A''$, following the induced topology, but that would collapse the poset down to a single point, as we require order-preservation. The second choice is to map $C'$ and $B'$ both to $B''$, but that would break functoriality and not follow the topology of the neighborhood.

\begin{remark}
We mention some observations from this section so far.
\begin{enumerate}
\item The nesting poset doesn't see the critical value in $[c-\epsilon,c]$ if $c$ is merging or a minimum, because the interiors have not merged yet.
\item Functoriality seems to hold from the category of topological spaces $\mathbf{Top}$ to the category of posets and order-preserving maps. However, we only assigned poset maps to particular topological inclusions, not all of them, as mentioned in the comment after the proof of Theorem \ref{theorem:nesting-poset-B}, so functoriality may hold for an appropriately defined subcategory of $\mathbf{Top}$.
\item In Figures \ref{fig:nesting1}, \ref{fig:nesting3}, \ref{fig:nesting4} there was always a largest poset element, and in fact $N_t$ always has a maximal element representing the unbounded component of $\pi^{-1}(t)-\iota(f^{-1}(t))$. The poset maps always send the maximal element to the maximal element, so it does not contain any interesting information.
\end{enumerate}
\end{remark}

For completeness, we include in Figure \ref{fig:nesting-ALL} an exhaustive description, up to diffeomorphism, of all of the choices presented by the ambiguous connected component labelling from Figure \ref{fig:nesting2}.

\begin{figure}[!ht]\centering
\newcommand\labelleft{-6.5} 
\begin{tikzpicture}[scale=.6,circ/.style={line width=2pt,draw=white}]
\node at (\labelleft,1.2) {(a) non-nesting};
\node at (\labelleft,.4) {merging saddle,};
\node at (\labelleft,-.4) {corresponding to};
\node at (\labelleft,-1.2) {statement~\ref{thm:nesting3}};
\node at (0,4) {$W_{c-\epsilon}'-\iota(f^{-1}(c-\epsilon))$};
\fill[newcol,opacity=.4] (0,0) circle (3);
\fill[newcol] (0,0) circle (1.5);
\draw[circ] ($(0,0)+(-45:1.6)$) .. controls +(135:1.6) and +(225:1.6) .. ($(0,0)+(45:1.6)$);
\draw[circ,xscale=-1] ($(0,0)+(-45:1.6)$) .. controls +(135:1.6) and +(225:1.6) .. ($(0,0)+(45:1.6)$);
\draw[circ] ($(0,0)+(45:1.6)$) .. controls +(45:2) and +(-45:2) .. ($(0,0)+(-45:1.6)$);
\draw[circ,xscale=-1] ($(0,0)+(45:1.6)$) .. controls +(45:2) and +(-45:2) .. ($(0,0)+(-45:1.6)$);
\begin{scope}[shift={(7,0)}]
\node at (0,4) {$W_{c}'-\iota(f^{-1}(c))$};
\fill[newcol,opacity=.4] (0,0) circle (3);
\fill[newcol] (0,0) circle (1.5);
\draw[circ] ($(0,0)+(-45:1.6)$) -- ++(135:3.2);
\draw[circ,xscale=-1] ($(0,0)+(-45:1.6)$) -- ++(135:3.2);
\draw[circ] ($(0,0)+(45:1.6)$) .. controls +(45:2) and +(-45:2) .. ($(0,0)+(-45:1.6)$);
\draw[circ,xscale=-1] ($(0,0)+(45:1.6)$) .. controls +(45:2) and +(-45:2) .. ($(0,0)+(-45:1.6)$);
\end{scope}
\begin{scope}[shift={(14,0)}]
\node at (0,4) {$W_{c+\epsilon}'-\iota(f^{-1}(c+\epsilon))$};
\fill[newcol,opacity=.4] (0,0) circle (3);
\fill[newcol] (0,0) circle (1.5);
\draw[circ] ($(0,0)+(-45:1.6)$) .. controls +(135:1.6) and +(45:1.6) .. ($(0,0)+(225:1.6)$);
\draw[circ,yscale=-1] ($(0,0)+(-45:1.6)$) .. controls +(135:1.6) and +(45:1.6) .. ($(0,0)+(225:1.6)$);
\draw[circ] ($(0,0)+(45:1.6)$) .. controls +(45:2) and +(-45:2) .. ($(0,0)+(-45:1.6)$);
\draw[circ,xscale=-1] ($(0,0)+(45:1.6)$) .. controls +(45:2) and +(-45:2) .. ($(0,0)+(-45:1.6)$);
\end{scope}
\begin{scope}[shift={(0,-7)}]
\node at (\labelleft,1.2) {(b) non-nesting};
\node at (\labelleft,.4) {splitting saddle,};
\node at (\labelleft,-.4) {corresponding to};
\node at (\labelleft,-1.2) {statement~\ref{thm:nesting4}};
\fill[newcol,opacity=.4] (0,0) circle (3);
\fill[newcol] (0,0) circle (1.5);
\draw[circ] ($(0,0)+(-45:1.6)$) .. controls +(135:1.6) and +(225:1.6) .. ($(0,0)+(45:1.6)$);
\draw[circ,xscale=-1] ($(0,0)+(-45:1.6)$) .. controls +(135:1.6) and +(225:1.6) .. ($(0,0)+(45:1.6)$);
\draw[circ] ($(0,0)+(45:1.6)$) .. controls +(45:2) and +(135:2) .. ($(0,0)+(135:1.6)$);
\draw[circ,yscale=-1] ($(0,0)+(45:1.6)$) .. controls +(45:2) and +(135:2) .. ($(0,0)+(135:1.6)$);
\begin{scope}[shift={(7,0)}]
\fill[newcol,opacity=.4] (0,0) circle (3);
\fill[newcol] (0,0) circle (1.5);
\draw[circ] ($(0,0)+(-45:1.6)$) -- ++(135:3.2);
\draw[circ,xscale=-1] ($(0,0)+(-45:1.6)$) -- ++(135:3.2);
\draw[circ] ($(0,0)+(45:1.6)$) .. controls +(45:2) and +(135:2) .. ($(0,0)+(135:1.6)$);
\draw[circ,yscale=-1] ($(0,0)+(45:1.6)$) .. controls +(45:2) and +(135:2) .. ($(0,0)+(135:1.6)$);
\end{scope}
\begin{scope}[shift={(14,0)}]
\fill[newcol,opacity=.4] (0,0) circle (3);
\fill[newcol] (0,0) circle (1.5);
\draw[circ] ($(0,0)+(-45:1.6)$) .. controls +(135:1.6) and +(45:1.6) .. ($(0,0)+(225:1.6)$);
\draw[circ,yscale=-1] ($(0,0)+(-45:1.6)$) .. controls +(135:1.6) and +(45:1.6) .. ($(0,0)+(225:1.6)$);
\draw[circ] ($(0,0)+(45:1.6)$) .. controls +(45:2) and +(135:2) .. ($(0,0)+(135:1.6)$);
\draw[circ,yscale=-1] ($(0,0)+(45:1.6)$) .. controls +(45:2) and +(135:2) .. ($(0,0)+(135:1.6)$);
\end{scope}
\end{scope}
\begin{scope}[shift={(0,-14)}]
\node at (\labelleft,1.2) {(c) nesting};
\node at (\labelleft,.4) {merging saddle,};
\node at (\labelleft,-.4) {corresponding to};
\node at (\labelleft,-1.2) {statement~\ref{thm:nesting5}};
\fill[newcol,opacity=.4] (0,0) circle (3);
\fill[newcol] (0,0) circle (1.5);
\draw[circ] ($(0,0)+(-45:1.6)$) .. controls +(135:1.6) and +(225:1.6) .. ($(0,0)+(45:1.6)$);
\draw[circ,xscale=-1] ($(0,0)+(-45:1.6)$) .. controls +(135:1.6) and +(225:1.6) .. ($(0,0)+(45:1.6)$);
\draw[circ] ($(0,0)+(45:1.6)$) .. controls +(45:1.5) and +(-45:1.5) .. ($(0,0)+(-45:1.6)$);
\draw[circ] ($(0,0)+(225:1.6)$) .. controls +(225:1) and +(180:1) .. (0,-2.5) arc (-90:90:2.5) .. controls +(180:1) and +(135:1) .. ($(0,0)+(135:1.6)$);
\begin{scope}[shift={(7,0)}]
\fill[newcol,opacity=.4] (0,0) circle (3);
\fill[newcol] (0,0) circle (1.5);
\draw[circ] ($(0,0)+(-45:1.6)$) -- ++(135:3.2);
\draw[circ,xscale=-1] ($(0,0)+(-45:1.6)$) -- ++(135:3.2);
\draw[circ] ($(0,0)+(45:1.6)$) .. controls +(45:1.5) and +(-45:1.5) .. ($(0,0)+(-45:1.6)$);
\draw[circ] ($(0,0)+(225:1.6)$) .. controls +(225:1) and +(180:1) .. (0,-2.5) arc (-90:90:2.5) .. controls +(180:1) and +(135:1) .. ($(0,0)+(135:1.6)$);
\end{scope}
\begin{scope}[shift={(14,0)}]
\fill[newcol,opacity=.4] (0,0) circle (3);
\fill[newcol] (0,0) circle (1.5);
\draw[circ] ($(0,0)+(-45:1.6)$) .. controls +(135:1.6) and +(45:1.6) .. ($(0,0)+(225:1.6)$);
\draw[circ,yscale=-1] ($(0,0)+(-45:1.6)$) .. controls +(135:1.6) and +(45:1.6) .. ($(0,0)+(225:1.6)$);
\draw[circ] ($(0,0)+(45:1.6)$) .. controls +(45:1.5) and +(-45:1.5) .. ($(0,0)+(-45:1.6)$);
\draw[circ] ($(0,0)+(225:1.6)$) .. controls +(225:1) and +(180:1) .. (0,-2.5) arc (-90:90:2.5) .. controls +(180:1) and +(135:1) .. ($(0,0)+(135:1.6)$);
\end{scope}
\end{scope}
\begin{scope}[shift={(0,-21)}]
\node at (\labelleft,1.2) {(d) nesting};
\node at (\labelleft,.4) {splitting saddle,};
\node at (\labelleft,-.4) {corresponding to};
\node at (\labelleft,-1.2) {statement~\ref{thm:nesting6}};
\fill[newcol,opacity=.4] (0,0) circle (3);
\fill[newcol] (0,0) circle (1.5);
\draw[circ] ($(0,0)+(-45:1.6)$) .. controls +(135:1.6) and +(225:1.6) .. ($(0,0)+(45:1.6)$);
\draw[circ,xscale=-1] ($(0,0)+(-45:1.6)$) .. controls +(135:1.6) and +(225:1.6) .. ($(0,0)+(45:1.6)$);
\draw[circ] ($(0,0)+(45:1.6)$) .. controls +(45:1.5) and +(135:1.5) .. ($(0,0)+(135:1.6)$);
\draw[circ] ($(0,0)+(-45:1.6)$) .. controls +(-45:1) and +(270:1) .. (2.5,0) arc (0:180:2.5) .. controls +(270:1) and +(225:1) .. ($(0,0)+(225:1.6)$);
\begin{scope}[shift={(7,0)}]
\fill[newcol,opacity=.4] (0,0) circle (3);
\fill[newcol] (0,0) circle (1.5);
\draw[circ] ($(0,0)+(-45:1.6)$) -- ++(135:3.2);
\draw[circ,xscale=-1] ($(0,0)+(-45:1.6)$) -- ++(135:3.2);
\draw[circ] ($(0,0)+(45:1.6)$) .. controls +(45:1.5) and +(135:1.5) .. ($(0,0)+(135:1.6)$);
\draw[circ] ($(0,0)+(-45:1.6)$) .. controls +(-45:1) and +(270:1) .. (2.5,0) arc (0:180:2.5) .. controls +(270:1) and +(225:1) .. ($(0,0)+(225:1.6)$);
\end{scope}
\begin{scope}[shift={(14,0)}]
\fill[newcol,opacity=.4] (0,0) circle (3);
\fill[newcol] (0,0) circle (1.5);
\draw[circ] ($(0,0)+(-45:1.6)$) .. controls +(135:1.6) and +(45:1.6) .. ($(0,0)+(225:1.6)$);
\draw[circ,yscale=-1] ($(0,0)+(-45:1.6)$) .. controls +(135:1.6) and +(45:1.6) .. ($(0,0)+(225:1.6)$);
\draw[circ] ($(0,0)+(45:1.6)$) .. controls +(45:1.5) and +(135:1.5) .. ($(0,0)+(135:1.6)$);
\draw[circ] ($(0,0)+(-45:1.6)$) .. controls +(-45:1) and +(270:1) .. (2.5,0) arc (0:180:2.5) .. controls +(270:1) and +(225:1) .. ($(0,0)+(225:1.6)$);
\end{scope}
\end{scope}
\end{tikzpicture}
\caption{Larger neighborhoods $W'\supseteq W$ corresponding to Statements~\ref{thm:nesting3}-\ref{thm:nesting6} of Theorem \ref{theorem:nesting-poset-B}.}
\label{fig:nesting-ALL}
\end{figure}
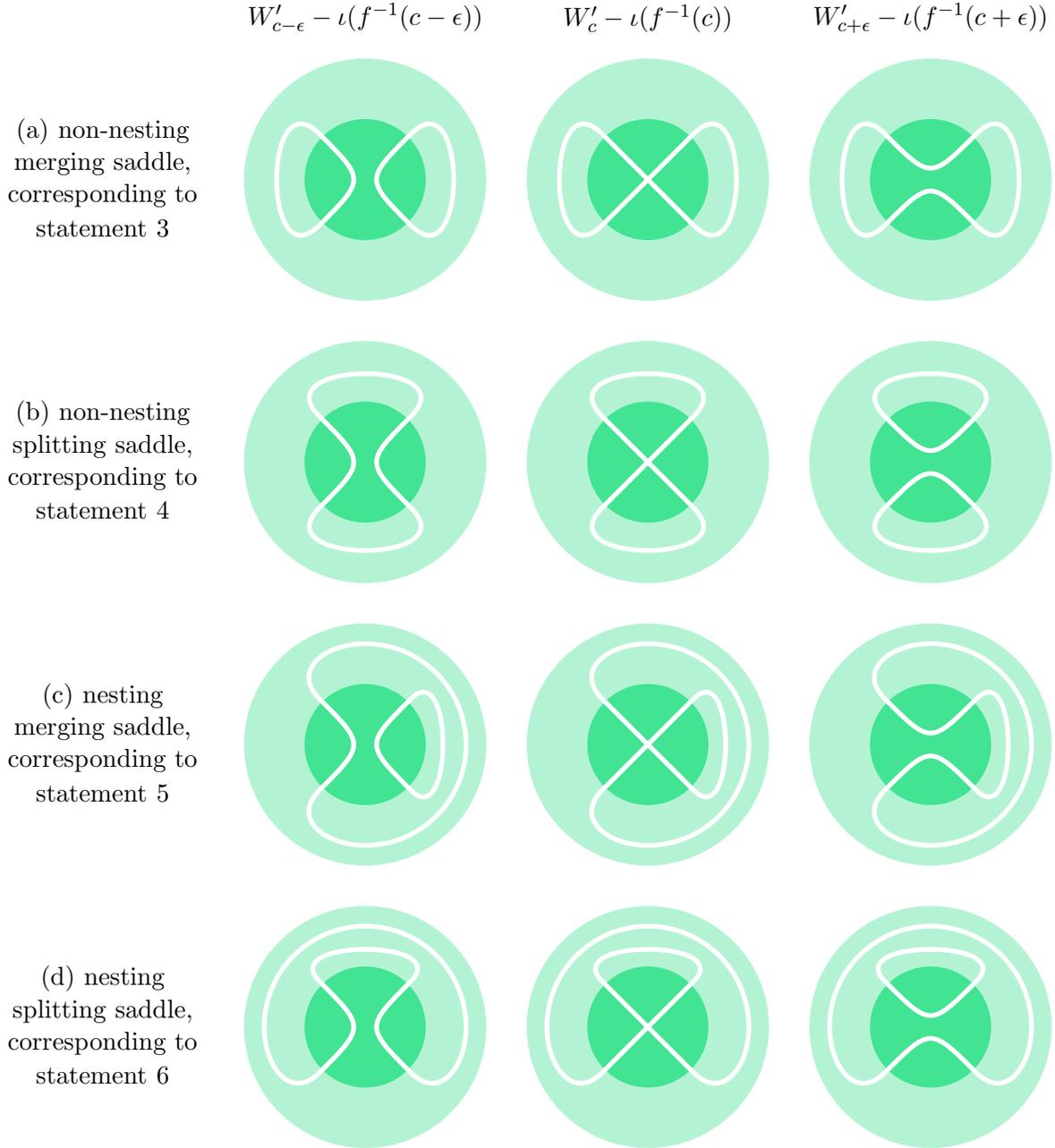

\subsection{The Zigzag of Posets}

\subsubsection{Combinatorial Barcode}
We now have a zigzag structure along $\Rspace$ of the nesting
posets of $f$. This new data will allow us to augment the data of the barcode with a new type of barcode that combinatorially describes a Morse function factoring through an embedding by a height projection.

\begin{corollary}
\label{cor:zigzag}
Given a slicing $a_0 < t_1 < a_1 < \cdots < t_n < a_{n}$, there is a zigzag of posets
\begin{equation}
\label{eqn:nesting-zigzag}
N_{a_0} \xleftrightarrow{\varphi^-_0} N_{t_1} \xleftrightarrow{\varphi^+_0} N_{a_1} \xleftrightarrow{\varphi^-_1} N_{t_2} \xleftrightarrow{\varphi^+_1} \cdots \xleftrightarrow{\varphi^-_n} N_{t_n} \xleftrightarrow{\varphi^+_n} N_{a_{n}},
\end{equation}
where the direction of each $\varphi^\pm_i$ is defined by Theorem \ref{theorem:nesting-poset-B}.
\end{corollary}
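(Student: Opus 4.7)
The plan is to bootstrap the local result of Theorem~\ref{theorem:nesting-poset-B}, which produces a length-two zigzag of poset maps in a neighborhood of each critical value, into a global zigzag along the entire slicing by gluing consecutive local zigzags with the regular-interval isomorphisms of Theorem~\ref{theorem:nesting-poset-A}. Since the slicing $a_0<t_1<a_1<\cdots<t_n<a_n$ is chosen so that each $t_i$ is the unique critical value in $(a_{i-1},a_i)$, the problem decouples into $n$ independent local problems, one for each~$t_i$, and the corollary is essentially an assembly statement.

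For the first step, fix $i$ and choose $\epsilon_i>0$ small enough that $[t_i-\epsilon_i,t_i+\epsilon_i]\subseteq (a_{i-1},a_i)$ and $t_i$ is the only critical value in this interval. By Theorem~\ref{theorem:nesting-poset-A} applied to the regular intervals $[a_{i-1},t_i-\epsilon_i]$ and $[t_i+\epsilon_i,a_i]$, there are orientation-preserving diffeomorphisms between the corresponding embedded level sets, which induce poset isomorphisms $\alpha_i\colon N_{a_{i-1}}\xrightarrow{\cong} N_{t_i-\epsilon_i}$ and $\beta_i\colon N_{t_i+\epsilon_i}\xrightarrow{\cong} N_{a_i}$. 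Theorem~\ref{theorem:nesting-poset-B} then supplies poset maps
\[
N_{t_i-\epsilon_i}\xleftrightarrow{\mu^-_i} N_{t_i}\xleftrightarrow{\mu^+_i} N_{t_i+\epsilon_i},
\]
whose directions are dictated by the Morse index and (for saddles) by the nesting type of the critical point $\iota^{-1}(t_i)$, exactly according to the six cases of that theorem.

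Define $\varphi^-_i$ as the composition of $\mu^-_i$ with $\alpha_i$ on the left-hand side, and $\varphi^+_i$ as the composition of $\mu^+_i$ with $\beta_i$ on the right-hand side (with the direction of $\varphi^\pm_i$ inheriting that of $\mu^\pm_i$, since pre- or post-composing with an isomorphism does not change the direction of a map). Splicing together the $n$ local zigzags using these compositions yields the stated chain
\[
N_{a_0}\xleftrightarrow{\varphi^-_0} N_{t_1}\xleftrightarrow{\varphi^+_0} N_{a_1}\xleftrightarrow{\varphi^-_1} N_{t_2}\xleftrightarrow{\varphi^+_1}\cdots\xleftrightarrow{\varphi^-_{n-1}} N_{t_n}\xleftrightarrow{\varphi^+_{n-1}} N_{a_n}.
\]

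There is no serious obstacle, since all nontrivial content has been extracted in the two preceding theorems; the only point requiring care is that the choice of $\epsilon_i$ is genuinely local to $t_i$, so the diagram does not depend on a single global $\epsilon$, and that the arrow directions recorded by $\varphi^\pm_i$ faithfully match the case of Theorem~\ref{theorem:nesting-poset-B} applicable to~$t_i$. Both points are immediate consequences of how the local maps were constructed in the proof of Theorem~\ref{theorem:nesting-poset-B}.
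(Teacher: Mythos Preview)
Your argument is correct and is exactly the elaboration the paper has in mind when it writes that the corollary ``follows directly from Theorem~\ref{theorem:nesting-poset-B}'': apply Theorem~\ref{theorem:nesting-poset-B} locally at each critical value and glue the resulting length-two zigzags along regular intervals via the isomorphisms of Theorem~\ref{theorem:nesting-poset-A}. Two cosmetic points: the critical point at level $t_i$ is $f^{-1}(t_i)$ (or rather the unique critical point therein), not $\iota^{-1}(t_i)$, and your final index set for the $\varphi^\pm$ is off by one relative to both the statement and your own construction (you build $\varphi^\pm_i$ for $i=1,\dots,n$ but then display $\varphi^\pm_0,\dots,\varphi^\pm_{n-1}$).
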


This follows directly from Theorem \ref{theorem:nesting-poset-B}. Note that every backwards map $\varphi$  in the zigzag \eqref{eqn:nesting-zigzag} can be reversed using the Galois connection construction \cite[Chapter 3]{roman2008lattices} with
\begin{equation}
\varphi^{\dagger}(y) \colonequals \max \{x:~\varphi(x) \leq y\},
\end{equation}
though we are only guaranteed $\varphi^{\dagger}(\varphi(x)) \geq x$. That is, while we do get a diagram
\begin{equation}
\label{diag:straightened-zigzag}
N_{a_0} \longrightarrow N_{t_1} \longrightarrow N_{a_1} \longrightarrow \cdots \longrightarrow N_{t_n} \longrightarrow N_{a_n}
\end{equation}
of posets, it does not capture splitting saddles (neither nesting nor non-nesting). Hence we instead explore a combinatorial barcode with $\field$-algebras corresponding to posets in a persistence module.

\begin{remark}
The zigzag \eqref{eqn:nesting-zigzag} is reminiscent of diagrams in zigzag persistence \cite{zigzag}, and at first glance it seems possible to recover \eqref{eqn:nesting-zigzag} by taking the nesting posets of spaces whose homology is taken to compute zigzag persistence. However, in zigzag persistence the considered spaces are of the sort $f^{-1}[t,s]$, whereas in our case we compute the nesting poset of (a subset of) $f^{-1}(t)$. For such $t$, there does not always exist $\epsilon>0$ such that $f^{-1}(t\pm \epsilon)$ is the same nesting poset as for $f^{-1}(t)$, and so computing the nesting poset of an interlevel set does not make sense in our context. Nonetheless, there are modifications \cite{kimmemoli} of this approach that hold promise for applications.
\end{remark}

For a poset $P$, an \emph{interval} $I$ in $P$ is a connected subposet $I\subseteq P$ such that for any $x, y \in I$, $x \leq t \leq y$ implies $t \in I$. When the poset if finite, an interval $I$ is generated by two endpoints, and we write $I = \left[ a, b \right] \colonequals \{t\in P :~ a \leq t \leq b \}$. The following definition comes from \cite{charalambides2018enumerative}.

\begin{definition}
The \textit{incidence algebra} $\field  P $ of a poset $P$ is the free vector space over $\field$ generated by set of intervals $I$ of $P$. Multiplication $\times \colon \field  P  \times \field  P  \rightarrow \field  P$ is given by concatenation of compatible intervals
\[
\left[ c,d \right] \times \left[ a,b \right] =
\begin{cases}
\left[ a,d \right] & \text{if\ }b=c \\
0 & \text{else,}
\end{cases}
\]
and multiplication is $0$ for unconcatenable intervals, making $\field  P $ a $\field$-algebra.
\end{definition}

\begin{conjecture}
\label{conj:zigzag-algebra}
There is a zigzag module of $\field$-algebras
\[
\field N_{a_0} \leftrightarrow \field N_{t_1} \leftrightarrow \field N_{a_1} \leftrightarrow \field N_{t_2} \leftrightarrow \cdots \leftrightarrow \field N_{t_n} \leftrightarrow \field N_{a_{n}},
\]
with arrow direction given by Corollary \ref{cor:zigzag}, that decomposes into a sum of interval indecomposables of the form
\[
\field_I (t) =
\begin{cases}
\field \{*\} & \text{if\ } t \in I \\
0 & \text{if\ } t \notin I.
\end{cases}
\]
Furthermore, this collection of interval indecomposables determines $f$ up to poset equivalence.
\end{conjecture}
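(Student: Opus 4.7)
The plan is to proceed in three stages: first verify that the zigzag of posets from Corollary \ref{cor:zigzag} lifts through $P \mapsto \field P$ to a zigzag of $\field$-algebras; second, invoke a zigzag-type interval decomposition theorem to obtain the claimed summands $\field_I$; and third, show that the resulting multiset of intervals carries enough information to reconstruct $f$ up to poset equivalence in the sense of Definition \ref{def:poset-equivalence}.

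For the first stage, I would check that each order-preserving map $\varphi\colon P \to Q$ between finite posets induces a $\field$-linear map $\field P \to \field Q$ by sending an interval $[a,b]$ to $[\varphi(a),\varphi(b)]$ when this remains an interval in $Q$ and to zero otherwise, extended by linearity. Compatibility with concatenation multiplication must be verified on each of the six morphism types of Theorem \ref{theorem:nesting-poset-B}: the injective cases arising from minima, maxima, and nesting saddles are essentially immediate, while the surjective cases at non-nesting saddles require checking that the identification of components can only fail concatenability in the expected way (namely, by collapsing an interval to a single idempotent). Composing these algebra maps along \eqref{eqn:nesting-zigzag} yields the desired diagram of $\field$-algebras.

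For the second stage, I would forget the algebra structure and view the diagram as a zigzag representation of the type-$A_{2n+1}$ quiver over the field $\field$. Gabriel's theorem then yields a unique decomposition into interval modules $\field_I$ of exactly the shape claimed. To upgrade this decomposition to the category of $\field$-algebras, note that each $\field_I$ has one-dimensional fibers $\field\{*\}$, so the algebra structure on each summand is forced to be $\field$ itself; one then checks that the projection of $\field N_t$ onto each summand is multiplicative on fibers. I expect this to follow from the observation that the length-zero intervals in $\field N_t$ are precisely the primitive idempotents and correspond bijectively with elements of $N_t$, while $\varphi$ carries length-zero intervals to length-zero intervals or zero; hence the decomposition refines the natural idempotent decomposition and each $\field_I$ tracks the persistence of a single poset element as $t$ varies.

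For the third stage, given the multiset of intervals together with the critical values $t_i$, one reconstructs $N_{a_i}$ as the subposet whose elements are the intervals whose interior contains $a_i$, with order inferred from the nonzero products of generators in $\field N_{a_i}$, and recovers each transition $\varphi_i^{\pm}$ from which intervals begin or terminate at $t_{i+1}$. The case analysis of Theorem \ref{theorem:nesting-poset-B} then labels each critical value by its type (minimum, maximum, or merging/splitting, nesting/non-nesting saddle), so two height-equivalent Morse functions with isomorphic interval decompositions admit a common slicing and compatible poset isomorphisms at every level. The main obstacle I anticipate is the algebra upgrade in the second stage: because two nontrivial intervals in $\field N_{a_i}$ can multiply to a third nonzero interval, the incidence algebras are not semisimple and no naive product decomposition into one-dimensional subalgebras exists at each slice. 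Overcoming this will likely require restricting attention to the idempotent subalgebra spanned by length-zero intervals (which already records the poset elements faithfully and is stable under the induced maps), or equivalently reformulating the conjecture as a decomposition of a persistence diagram of functors from $N_t$ regarded as categories, for which a Gabriel-type result in the functor category is both more natural and more likely to hold.
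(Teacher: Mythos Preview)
The paper does not prove this statement: it is explicitly labelled a \emph{conjecture} and is left open, with only the remark that the approach ``follows the vein of foundational persistent homology results'' on interval decomposition. There is therefore no proof in the paper against which to compare your proposal; you are attempting to settle an open problem.

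That said, your proposal has a genuine gap, and you have already located it yourself. Gabriel's theorem gives you a decomposition of the underlying zigzag \emph{representation} (i.e.\ of the diagram of $\field$-vector spaces) into interval modules, but the conjecture asks for a decomposition in the category of $\field$-algebras. As you note, the incidence algebra $\field N_t$ is almost never semisimple: whenever $N_t$ has a chain of length at least two, the product of two nonzero basis intervals can be a third nonzero basis interval, so $\field N_t$ does not split as a product of copies of $\field$. Consequently the projections onto the one-dimensional summands $\field_I$ cannot all be algebra maps, and the phrase ``decomposes into a sum of interval indecomposables'' cannot be read as a direct sum of algebras without modification. Your suggested fix---passing to the subalgebra spanned by length-zero intervals, which is just $\field^{|N_t|}$ with componentwise multiplication---does make the algebra decomposition trivially available, but it also discards the order information that the incidence algebra was meant to encode, so the third stage (recovering the poset order ``from the nonzero products of generators'') would no longer go through on that subalgebra alone.

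There is a second, smaller issue in your first stage. For the surjective poset maps arising at non-nesting saddles, the assignment $[a,b]\mapsto[\varphi(a),\varphi(b)]$ is not multiplicative: if $b\neq c$ but $\varphi(b)=\varphi(c)$, then $[c,d]\times[a,b]=0$ in $\field P$ while $[\varphi(c),\varphi(d)]\times[\varphi(a),\varphi(b)]$ can be nonzero in $\field Q$. So even producing a zigzag of genuine $\field$-algebra homomorphisms requires more care than your outline suggests. Any resolution of the conjecture will likely have to either weaken ``$\field$-algebras'' to ``$\field$-vector spaces carrying compatible idempotent data'' or reformulate the diagram categorically as you hint at in your final sentence; the paper does not indicate which reading is intended.
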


This approach follows the vein of foundational persistent homology results \cite{Crawley-Boevey2015,Botnan2017} about decomposition of barcodes into fundamental parts. We call this collection of interval indecomposables the \emph{combinatorial barcode} of $f$.

\section{Counting Morse Functions}
\label{sec:results-counting}

The observations of the previous section, specifically Figure \ref{fig:crit-types},  hint to a method of counting Morse functions by their barcode. In this section, instead of analyzing the local behavior around critical values as before, we start with a global picture of a complete barcode, and use Figure \ref{fig:counting-example} as motivation. As before, $f\colon \Sspace^2 \rightarrow \R$ factors as $\pi\circ\iota$, for $\iota\colon \Sspace^2\to \Rspace^3$ a smooth embedding.

\begin{figure}[h]\centering
\begin{tikzpicture}[scale=.6]
\begin{scope}[shift={(-27*\bwid,0)}]
\node (lab) at (-2,1) {(a) Given};
\node[anchor=north] at (lab.south) {barcode};
\draw[barco] (.3,0) -- (.3,1);
\draw[barco] (.6,-1) -- (.6,2);
\draw[barcc] (.9,-2) -- (.9,3);
\end{scope}
\begin{scope}[shift={(-\bwid,0)}]
\begin{scope}[shift={(-2*\bwid,0)}]
\draw[barcc] (.3,-2) -- (.3,3);
\foreach \y in {-2,...,3}{
  \draw[opacity=.5,dotted] (0,\y)--(.6*\hfactor*\bwid,\y);
}
\end{scope}
\begin{scope}[shift={(1.5*\bwid,0)}]
\node at (0,-2.5) {$\iota_1$};
\node (lab) at (-3.9,1) {(b) 1st bar: one};
\node[anchor=north] at (lab.south) {embedding};
\draw[fill=newcol] (0,3) arc (90:0:\bwid/2) --++ (270:5-\bwid)
  arc (0:-180:\bwid/2) --++ (90:5-\bwid)
  arc (180:90:\bwid/2);
\end{scope}
\end{scope}
\begin{scope}[shift={(3*\hfactor*\bwid,0)}]
\begin{scope}[shift={(-1.4*\bwid,0)}]
\draw[barco] (.3,-1) -- (.3,2);
\draw[barcc] (.6,-2) -- (.6,3);
\foreach \y in {-2,...,3}{
  \draw[opacity=.5,dotted] (0,\y)--(13*\bwid,\y);
}
\node (lab) at (-2.8,1) {(c) 2nd bar: two};
\node[anchor=north] at (lab.south) {embeddings};
\end{scope}
\begin{scope}[shift={(4.5*\bwid,0)}]
\node at (-1*\bwid,-2.5) {$\iota_{11}$};
\draw[fill=newcol] (0,3) arc (90:0:\bwid/2) --++ (270:5-\bwid)
  arc (0:-180:\bwid/2) --++ (90:4-\bwid)
  arc (0:180:\bwid/2) --++ (270:3-\bwid)
  arc (0:-180:\bwid/2) --++ (90:3-\bwid/2)
  arc (180:90:\bwid) arc (-90:0:\bwid) --++ (90:1-2.5*\bwid)
  arc (180:90:\bwid/2);
\end{scope}
\begin{scope}[shift={(9.5*\bwid,0)}]
\node at (-1*\bwid,-2.5) {$\iota_{12}$};
\draw[fill=newcol] (0,3) arc (90:0:\bwid/2) --++ (270:5-2*\bwid)
  arc (0:-180:1.5*\bwid) --++ (90:4-2*\bwid)
  arc (180:90:\bwid/2) arc (-90:0:1.5*\bwid) --++ (90:1-2*\bwid)
  arc (180:90:\bwid/2);
\draw[fill=newcol!66,dashed] (-2*\bwid,2) arc (90:0:\bwid/2) --++ (270:3-\bwid)
  arc (-180:0:\bwid/2) --++ (90:3+\bwid)
  arc (0:-90:1.5*\bwid);
\end{scope}
\end{scope}
\begin{scope}[shift={(-3,-6.5)}]
\begin{scope}[shift={(-2*\bwid,0)}]
\draw[barco] (.3,0) -- (.3,1);
\draw[barco] (.6,-1) -- (.6,2);
\draw[barcc] (.9,-2) -- (.9,3);
\foreach \y in {-2,...,3}{
  \draw[opacity=.5,dotted] (0,\y)--(32*\bwid,\y);
}
\node (lab) at (-3.8,1) {(d) 3rd bar: four};
\node[anchor=north] at (lab.south) {embeddings from $\iota_{11}$};
\end{scope}
\begin{scope}[shift={(7.5*\bwid,0)}]
\node at (-2*\bwid,-2.5) {$\iota_{111}$};
\draw[fill=newcol] (0,3) arc (90:0:\bwid/2) --++ (270:5-\bwid)
  arc (0:-180:\bwid/2) --++ (90:4-\bwid)
  arc (0:180:\bwid/2) --++ (270:3-\bwid)
  arc (0:-180:\bwid/2) --++ (90:2-\bwid)
  arc (0:180:\bwid/2) --++ (270:1-\bwid)
  arc (0:-180:\bwid/2) --++ (90:1-\bwid/2)
  arc (180:90:\bwid) arc (-90:0:\bwid) --++ (90:1-2*\bwid)
  arc (180:90:\bwid) arc (-90:0:\bwid) --++ (90:1-2.5*\bwid)
  arc (180:90:\bwid/2);
\end{scope}
\begin{scope}[shift={(14.5*\bwid,0)}]
\node at (-2*\bwid,-2.5) {$\iota_{112}$};
\draw[fill=newcol] (0,3) arc (90:0:\bwid/2) --++ (270:5-\bwid)
  arc (0:-180:\bwid/2) --++ (90:4-\bwid)
  arc (0:180:\bwid/2) --++ (270:3-2*\bwid)
  arc (0:-180:1.5*\bwid) --++ (90:2-2*\bwid)
  arc (180:90:\bwid/2) arc (-90:0:1.5*\bwid) --++ (90:1-1.5*\bwid)
  arc (180:90:\bwid) arc (-90:0:\bwid) --++ (90:1-2.5*\bwid)
  arc (180:90:\bwid/2);
\draw[fill=newcol!66,dashed] (-4*\bwid,1) arc (90:0:\bwid/2) --++ (270:1-\bwid)
  arc (180:360:\bwid/2) --++ (90:1+\bwid)
  arc (0:-90:1.5*\bwid);
\end{scope}
\begin{scope}[shift={(21.5*\bwid,0)}]
\node at (-2*\bwid,-2.5) {$\iota_{113}$};
\draw[fill=newcol] (-2*\bwid,3) arc (90:0:\bwid/2) --++ (270:2-2.5*\bwid)
  arc (180:270:\bwid) arc (90:0:\bwid) --++ (270:1-\bwid/2)
  arc (0:-180:\bwid/2) --++ (90:1-\bwid)
  arc (0:180:\bwid/2) --++ (270:3-\bwid)
  arc (0:-180:\bwid/2) --++ (90:4-\bwid)
  arc (0:180:\bwid/2) --++ (270:3-\bwid)
  arc (0:-180:\bwid/2) --++ (90:3-\bwid/2)
  arc (180:90:\bwid) arc (-90:0:\bwid) --++ (90:1-2.5*\bwid)
  arc (180:90:\bwid/2);
\end{scope}
\begin{scope}[shift={(28.5*\bwid,0)}]
\node at (-2*\bwid,-2.5) {$\iota_{114}$};
\draw[fill=newcol] (-2*\bwid,3) arc (90:0:\bwid/2) --++ (270:2-2*\bwid)
  arc (180:270:1.5*\bwid) arc (90:0:\bwid/2) --++ (270:3-2*\bwid)
  arc (0:-180:1.5*\bwid) --++ (90:4-2*\bwid)
  arc (0:180:\bwid/2) --++ (270:3-\bwid)
  arc (0:-180:\bwid/2) --++ (90:3-\bwid/2)
  arc (180:90:\bwid) arc (-90:0:\bwid) --++ (90:1-2.5*\bwid)
  arc (180:90:\bwid/2);
\draw[fill=newcol!66,dashed] (0,1) arc (90:180:\bwid/2) --++ (270:1-\bwid)
  arc (0:-180:\bwid/2) --++ (90:1+\bwid)
  arc (-180:-90:1.5*\bwid);
\end{scope}
\end{scope}
\begin{scope}[shift={(-3,-13)}]
\begin{scope}[shift={(-2*\bwid,0)}]
\draw[barco] (.3,0) -- (.3,1);
\draw[barco] (.6,-1) -- (.6,2);
\draw[barcc] (.9,-2) -- (.9,3);
\foreach \y in {-2,...,3}{
  \draw[opacity=.5,dotted] (0,\y)--(32*\bwid,\y);
}
\node (lab) at (-3.8,1) {(e) 3rd bar: four};
\node[anchor=north] at (lab.south) {embeddings from $\iota_{12}$};
\end{scope}
\begin{scope}[shift={(7.5*\bwid,0)}]
\node at (-2*\bwid,-2.5) {$\iota_{121}$};
\draw[fill=newcol] (0,3) arc (90:0:\bwid/2) --++ (270:5-2*\bwid)
  arc (0:-180:1.5*\bwid) --++ (90:3-2*\bwid)
  arc (0:180:\bwid/2) --++ (270:1-\bwid)
  arc (0:-180:\bwid/2) --++ (90:1-\bwid/2)
  arc (180:90:\bwid) arc (-90:0:\bwid) --++ (90:1-2.5*\bwid)
  arc (180:90:\bwid/2) arc (-90:0:1.5*\bwid) --++ (90:1-2*\bwid)
  arc (180:90:\bwid/2);
\draw[fill=newcol!66,dashed] (-2*\bwid,2) arc (90:0:\bwid/2) --++ (270:3-\bwid)
  arc (-180:0:\bwid/2) --++ (90:3+\bwid)
  arc (0:-90:1.5*\bwid);
\end{scope}
\begin{scope}[shift={(14.5*\bwid,0)}]
\node at (-2*\bwid,-2.5) {$\iota_{122}$};
\draw[fill=newcol] (0,3) arc (90:0:\bwid/2) --++ (270:5-3*\bwid)
  arc (0:-180:2.5*\bwid) --++ (90:3-3*\bwid)
  arc (180:90:\bwid/2) arc (-90:0:1.5*\bwid) --++ (90:1-2*\bwid)
  arc (180:90:\bwid/2) arc (-90:0:1.5*\bwid) --++ (90:1-2*\bwid)
  arc (180:90:\bwid/2);
\draw[fill=newcol!66,dashed] (-4*\bwid,1) arc (90:0:\bwid/2) --++ (270:1-\bwid)
  arc (180:360:\bwid/2) --++ (90:1+\bwid)
  arc (0:-90:1.5*\bwid);
\draw[fill=newcol!66,dashed] (-2*\bwid,2) arc (90:0:\bwid/2) --++ (270:3-\bwid)
  arc (-180:0:\bwid/2) --++ (90:3+\bwid)
  arc (0:-90:1.5*\bwid);
\end{scope}
\begin{scope}[shift={(21.5*\bwid,0)}]
\node at (-2*\bwid,-2.5) {$\iota_{123}$};
\draw[fill=newcol] (0,3) arc (90:0:\bwid/2) --++ (270:5-3*\bwid)
  arc (0:-180:2.5*\bwid) --++ (90:4-5*\bwid)
  arc (180:90:2.5*\bwid) arc (-90:0:1.5*\bwid) --++ (90:1-2*\bwid)
  arc (180:90:\bwid/2);
\draw[fill=newcol!66,dashed] (-2*\bwid,2) arc (90:0:\bwid/2) --++ (270:1-2.5*\bwid)
  arc (0:-90:\bwid) arc (90:180:\bwid) --++ (270:1-\bwid/2)
  arc (-180:0:\bwid/2) --++ (90:1-\bwid)
  arc (180:0:\bwid/2) --++ (270:2-\bwid)
  arc (-180:0:\bwid/2) --++ (90:3+\bwid)
  arc (0:-90:1.5*\bwid);
\end{scope}
\begin{scope}[shift={(28.5*\bwid,0)}]
\node at (-2*\bwid,-2.5) {$\iota_{124}$};
\draw[fill=newcol] (0,3) arc (90:0:\bwid/2) --++ (270:5-3*\bwid)
  arc (0:-180:2.5*\bwid) --++ (90:4-5*\bwid)
  arc (180:90:2.5*\bwid) arc (-90:0:1.5*\bwid) --++ (90:1-2*\bwid)
  arc (180:90:\bwid/2);
\draw[fill=newcol!66,dashed] (-2*\bwid,2) arc (90:0:\bwid/2) --++ (270:1-2*\bwid)
  arc (0:-90:1.5*\bwid) arc (90:180:\bwid/2) --++ (270:2-2*\bwid)
  arc (-180:0:1.5*\bwid) --++ (90:3)
  arc (0:-90:1.5*\bwid);
\draw[fill=newcol!33,dashed] (-3*\bwid,1) arc (90:0:\bwid/2) --++ (270:1-\bwid)
  arc (180:360:\bwid/2) --++ (90:1+\bwid)
  arc (0:-90:1.5*\bwid);
\end{scope}
\end{scope}
\end{tikzpicture}
\caption{A motivating example.}
\label{fig:counting-example}
\end{figure}
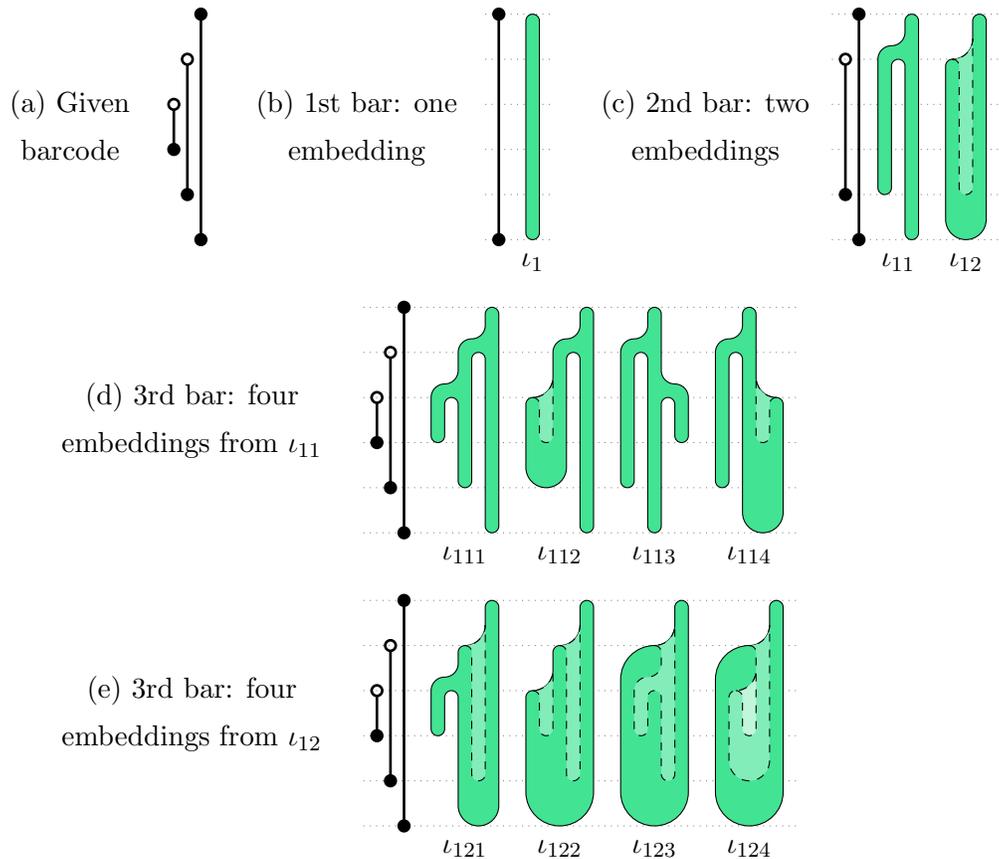

\begin{example}\label{ex:one-max}
Suppose $f$ has $6$ critical values and a known zero-dimensional barcode consisting of $3$ bars nested inside each other, as in Figure \ref{fig:counting-example}(a). Construct all embeddings $\iota$ of $f$ by considering the effect of each bar on the embedding separately, following the nesting/non-nesting poset approach of Section \ref{sec:results-posets}. Begin with the largest bar of the barcode, then add bars in their nesting order. The second bar gives $2$ nesting choices. Each of the $2$ nesting choices gives $4$ more nesting choices, when the smallest bar of the barcode is added.
\end{example}

Example \ref{ex:one-max} leads immediately to several observations. First note that the types of critical points associated with closed endpoints of barcodes are decided (local minimum at the highest points, local maximum at the lowest points). Second, we see that simply choosing nesting or non-nesting-type at the open barcode endpoints does not give all the embeddings.
For example, $\iota_{111}$ and $\iota_{113}$ have identical critical value types in the same order. Finally, note that the relation between the number of bars and number of embeddings depends upon containment relations among the bars. That is, by considering bars largest to smallest, for every bar contained in a larger one, the number of embeddings computed up to that point doubles. This is  more precisely described by Conjecture \ref{conj:counting}.

Let $B$ be the barcode of $f$, viewed as a set of subintervals $I_1,\dots,I_N$ of $\Rspace$. For every $j=1,\dots,N$, let $\mu_B(I_j)$ be the number of bars $I_k$ in $B$ such that $I_j\subsetneq I_k$.

\begin{conjecture}
\label{conj:counting}
The number of ways the Morse function $f$ factors through $\Rspace^3$, up to height-equivalence, is bounded below by
\begin{equation}
\label{eqn:count}
2^{N-1}\prod_{j=2}^N \mu_B(I_j).
\end{equation}
\end{conjecture}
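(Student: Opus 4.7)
The plan is to prove the bound by induction on the number of bars $N$, ordering them by a linear extension of the strict containment relation so that $I_1$ is outermost and $I_k \supsetneq I_j$ implies $k < j$. The base case $N=1$ gives a single infinite bar, realized up to height-equivalence by the standard height function on a round sphere; the formula yields $2^0$ times an empty product, equal to $1$.

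For the inductive step, given any embedding $\iota'$ realizing the first $N-1$ bars, I would construct $2\mu_B(I_N)$ new embeddings that additionally realize $I_N$. The two degrees of choice are (i) the \emph{host} bar $I_k \supsetneq I_N$ inside whose corresponding nested region the new feature is inserted, giving $\mu_B(I_N)$ options, and (ii) the \emph{type} of the newly introduced saddle, nesting or non-nesting, as classified by Theorem~\ref{theorem:nesting-poset-B} and Figure~\ref{fig:nesting-ALL}, giving $2$ options. Each modification is a local surgery supported in a thin slab over $I_N$: inside the chosen host, one glues in either a small pocket (producing a new nested Jordan curve at levels just inside $I_N$) or a handle-like protrusion (producing a new non-nested Jordan curve). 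The multiplicative bookkeeping $\prod_{j=2}^N 2\mu_B(I_j) = 2^{N-1}\prod_{j=2}^N \mu_B(I_j)$ then matches the claimed lower bound.

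The main obstacle is showing these constructions yield pairwise height-inequivalent factorizations. A height-equivalence $\psi$ is level-set and orientation preserving, so per diagram \eqref{eqn:height-equiv-diag} it restricts at each $t$ to a plane homeomorphism taking $\iota \circ f^{-1}(t)$ to $\iota' \circ f^{-1}(t)$, thereby inducing an isomorphism of nesting posets $N_t \to N_t'$. The host-bar choice at stage $N$ is detectable from $N_t$ at any regular $t$ just above the birth of $I_N$, because the newly created inner Jordan curve sits inside exactly the parent curve determined by $I_k$ and is a sibling to the curves of any $I_l \not\supset I_N$ that happen to be alive at $t$; the nesting versus non-nesting distinction is in turn detectable in the transition $N_{c-\epsilon} \leftrightarrow N_c \leftrightarrow N_{c+\epsilon}$ by Theorem~\ref{theorem:nesting-poset-B}. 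The delicate part is to package all these local distinctions into a single invariant whose value separates the $2^{N-1}\prod_{j=2}^N \mu_B(I_j)$ constructed embeddings simultaneously; the natural such invariant is the entire zigzag of nesting posets from Corollary~\ref{cor:zigzag}, or equivalently the combinatorial barcode of Conjecture~\ref{conj:zigzag-algebra}, so establishing that conjecture would give a cleaner proof of this one.

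Finally, the bound is only a lower bound because the construction ignores further planar embedding data not recorded by parent choice and nesting type alone --- for example, the cyclic ordering of several sibling features sharing one host, or the choice of which existing component a non-nesting saddle annexes --- as is already visible between $\iota_{111}$ and $\iota_{113}$ of Figure~\ref{fig:counting-example}, which share the same sequence of critical value types. Enumerating these additional contributions would be needed to sharpen the inequality to an equality, but the lower bound as stated only requires exhibiting the inductive family above.
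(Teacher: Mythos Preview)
The paper does not prove this statement: Conjecture~\ref{conj:counting} is explicitly labelled a conjecture and the text immediately following it reads ``We leave this conjecture open for future work,'' going on only to list caveats about why Example~\ref{ex:one-max} is not generic. There is therefore no proof in the paper to compare your proposal against.

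As for your proposal itself, it is a reasonable outline of how one might attack the problem, and in fact it articulates the same intuition that motivates the conjecture in the paper (inductively adding bars, doubling via the nesting/non-nesting dichotomy of Theorem~\ref{theorem:nesting-poset-B}, and multiplying by the number of host bars). But it is not a proof. Two genuine gaps remain. First, the distinctness argument is where all the work lies, and you explicitly defer it to Conjecture~\ref{conj:zigzag-algebra}, which is itself open in the paper; invoking one open conjecture to prove another does not close the circle. Second, your claim that a height-equivalence $\psi$ automatically induces isomorphisms of nesting posets at every regular level is exactly the assertion that height equivalence implies poset equivalence --- yet the paper asserts this implication is \emph{strict}, citing Figure~\ref{fig:shotglassworm} as a pair that is height-equivalent but not poset-equivalent. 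Either the paper's example is mislabelled or your argument that $\psi$ restricts to nesting-preserving plane homeomorphisms needs more care; in any case you should confront this tension directly, since your entire separation strategy rests on it.

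In short: your plan matches the heuristic behind the conjecture, but the conjecture remains open in the paper, and your sketch does not supply the missing ingredient --- a rigorous invariant of height-equivalence classes fine enough to separate the $2^{N-1}\prod_{j=2}^N \mu_B(I_j)$ constructed embeddings.
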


We leave this conjecture open for future work, and make two observations about why Example \ref{ex:one-max} is not a generic example.
\begin{itemize}
\item The barcode in the example has a single interval whose highest endpoint is closed. Given more than one such interval, the count given in Conjecture \ref{conj:counting} would miss such embeddings.
\item The barcode in the example does not give rise to embeddings whose branches might be ``twisted" in a non-trivial manner. That is, given a barcode with more bars, the count from Conjecture \ref{conj:counting} would miss the embeddings with ``twists".
\end{itemize}

\section{Discussions}
\label{sec:discussions}

We have characterized the moduli space of classes of Morse functions on the sphere under both functional and dynamic settings. Using persistence as a constraint, we have defined equivalence relations between Morse functions and have studied the combinatorial structure of Morse functions on a sphere modulo such relations. Our approach describes structures in detail, and therefore provides a fruitful ground for continued research in several directions.

\subsection{Realizing Preimages of a Barcode}

Conjecture \ref{conj:counting} considers a counting argument that is only at the
beginning of addressing Objective~3 from Section \ref{sec:introduction}.  This
is given in the context of finding a representative of the preimage of the
persistence map
\begin{equation}
\{ \text{Morse functions on } \Sspace^2\} \rightarrow \{ \text{Barcodes} \},
\end{equation}
where a barcode is viewed as a multiset of intervals of $\Rspace$. To further
answer Objective 3, we may ask: given a barcode $B$, find an embedding $\iota
\colon \Sspace^2 \rightarrow \Rspace^3$ such that $B$ is the barcode of $f
\colonequals \pi \circ \iota$ under level set persistence in degree zero. Note
that some barcodes cannot be realized as height-embedded Morse functions on
$\Sspace^2$, for example $B = \{ \left[ 0, 3 \right], \left( 1, 2 \right) \}$,
as any open interval or a closed interval nested in another closed interval is
forbidden for the sphere. However, if $B$ has a single closed bar inside which
all other bars are contained, we can easily construct a Reeb graph
$\mathcal{R}_B$, which in turn may be associated to a diagram of 1-spheres and
wedges of 1-spheres, which may then be assembled into an embedding of a
2-sphere, as in Figure \ref{fig:preimage-realization}.

\begin{figure}[h]\centering
\newcommand\circhy{.06}
\begin{tikzpicture}[scale=1]
\draw[barcc] (-.3,0)--(-.3,5);
\draw[barco] (0,1)--(0,4);
\draw[barco] (.3,2)--(.3,3);
\node at (0,-1) {barcode\vphantom{graph}};
\begin{scope}[shift={(2.5,0)}]
\draw[line width=1pt] (-.75,0)--(-.75,5);
\draw[line width=1pt] (-.75,4) to [out=0,in=90] (0,3)--(0,1);
\draw[line width=1pt] (0,3) to [out=0,in=90] (.75,2);
\node at (0,-1) {Reeb graph};
\end{scope}
\begin{scope}[shift={(6,0)}]
\node at (0,-1) {diagram in $\mathbf{Top}$};
\begin{scope}[xscale=1.5,yscale=2]
\coordinate (a0) at (-1,0); \coordinate (a1) at (-1,2); \coordinate (a2) at (-1,3);
\coordinate (b0) at (0,0); \coordinate (b1) at (0,1);
\coordinate (c0) at (1,0);
\foreach \coord\lab in {a0/{*}, a1/{\Sspace^1\vee \Sspace^1}, a2/{*}, b0/{*}, b1/{\Sspace^1\vee \Sspace^1}, c0/{*}}{
  \node (\coord) at (\coord) {$\lab$};
}
\foreach \top\bot in {a0/a1, b0/b1, a1/a2, b1/a1, c0/b1}{
  \node (mid) at ($(\bot)!.5!(\top)$) {$\Sspace^1$};
  \draw[<-] (\bot)--(mid);
  \draw[<-] (\top)--(mid);
}
\end{scope}
\end{scope}
\begin{scope}[shift={(10,0)}]
\draw[rounded corners=1pt,fill=newcol] (-1,0) -- (-1.2,2) -- (-1.4,4) -- (-1,5) -- (-.8,4.5) -- (-.6,4) -- (-.3,3.5) -- (.4,3) -- (1.2,2.5) -- (1,2) -- (.8,2.5) -- (0,3) -- (.2,2) -- (0,1) -- (-.2,2) -- (-.4,3) -- (-.7,3.5) -- (-1,4) -- (-.8,2) -- (-1,0);
\foreach \x\y in {-1/2, -1/4.5, -.5/3.5, 0/2, 1/2.5}{
  \draw (\x-.2,\y) arc (-180:0:.2 and \circhy);
  \draw[densely dotted] (\x-.2,\y) arc (180:0:.2 and \circhy);
}
\foreach \x\y in {-1/4, 0/3}{
  \draw (\x,\y) arc (0:-180:.2 and \circhy);
  \draw (\x,\y) arc (-180:0:.2 and \circhy);
  \draw[densely dotted] (\x,\y) arc (0:180:.2 and \circhy);
  \draw[densely dotted] (\x,\y) arc (180:0:.2 and \circhy);
}
\node at (0,-1) {realized embedding of $\Sspace^2$};
\end{scope}
\end{tikzpicture} 
\caption{Constructing a 2-sphere embedding from a barcode.}
\label{fig:preimage-realization}
\end{figure}
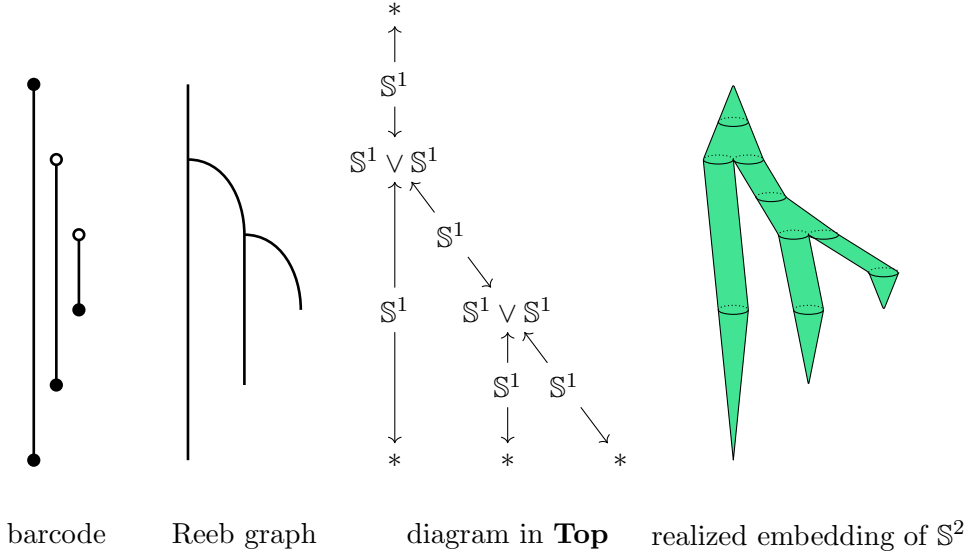

This construction suggests an algorithmic approach to constructing embeddings $\iota\colon \Sspace^2 \rightarrow \Rspace^3$ such that the barcode of the height function on $\im(\iota)$ will produce the barcode $B$. That is, every vertex of the Reeb graph of degree $n\geqslant 3$ corresponds to a wedge of $n-1$ spheres, every vertex of degree 1 corresponds to a point, and every edge between vertices corresponds to a zigzag $X\leftarrow \Sspace^1 \rightarrow Y$ in $\mathbf{Top}$. We leave the formalization and extension of these ideas open for further research.

\subsection{Extending the Nesting Poset}

Theorem \ref{theorem:nesting-poset-B} and its proof used arguments based on including smaller topological spaces into larger ones. Using the critical values of $f$, every open interval of $\Rspace$ containing at most one critical value can be uniquely associated with a nesting poset. Then the maps described in Theorem \ref{theorem:nesting-poset-B} correspond to restriction to a subset or containment in a superset, the former case described in Figure \ref{fig:sheaf-example}, following Figure \ref{fig:nesting3}.

\begin{figure}[h]\centering
\begin{tikzpicture}
\node at (-2,0) {$\Rspace$};
\draw[line width=1pt] (-1,0)--(7,0);
\foreach \x in {0,3,6}{
  \fill[white] (\x,0) circle (0.1);
  \fill (\x,0) circle (0.05);
}
\node[anchor=3] at (-1,0) {$\cdots$};
\node[anchor=177] at (7,0) {$\cdots$};
\draw[{Arc Barb[arc=100]}-{Arc Barb[arc=100]}] (2,.2) to node[above] {$U$} (2.8,.2);
\draw[{Arc Barb[arc=100]}-{Arc Barb[arc=100]}] (3.2,.2) to node[above] {$V$} (4,.2);
\draw[{Arc Barb[arc=100]}-{Arc Barb[arc=100]}] (1.8,-.2) to node[below] {$W$} (4.2,-.2);
\begin{scope}[shift={(.8,1)}]
\coordinate (A) at (.5,0);
\coordinate (Ab) at (-.5,.4);
\coordinate (Ac) at (-.5,-.4);
\fill (A) circle (.08);
\fill (Ab) circle (.08);
\fill (Ac) circle (.08);
\draw[conx] (Ab)--(A);
\draw[conx] (Ac)--(A);
\node[anchor=east,xshift=-.5cm] at ($(Ab)!.5!(Ac)$) {$\mathcal F(U) = $};
\coordinate (u) at ($(Ac)+(-45:.5)$);
\end{scope}
\begin{scope}[shift={(5,1)}]
\coordinate (A) at (.5,0);
\coordinate (b) at (-.5,0);
\fill (A) circle (.08);
\fill (b) circle (.08);
\draw[conx] (b)--(A);
\node[anchor=west,xshift=.2cm] at (A) {$= \mathcal F(V)$};
\coordinate (v) at ($(b)+(-35:.8)$);
\end{scope}
\begin{scope}[shift={(2.25,-1.3)}]
\coordinate (A) at (.5,0);
\coordinate (Ab) at (-.5,.4);
\coordinate (Ac) at (-.5,-.4);
\fill (A) circle (.08);
\fill (Ab) circle (.08);
\fill (Ac) circle (.08);
\draw[conx] (Ab)--(A);
\draw[conx] (Ac)--(A);
\node[anchor=west,xshift=.2cm] (ww) at (A) {$= \mathcal F(W)$};
\coordinate (w) at ($($(Ab)!.5!(Ac)$)+(180:.5)$);
\end{scope}
\draw[{Hooks[left]}->] (w) to [out=180,in=270] (u);
\draw[->>] (ww.east) to [out=0,in=270] (v);
\end{tikzpicture}
\caption{Associating poset maps to set restrictions.}
\label{fig:sheaf-example}
\end{figure}
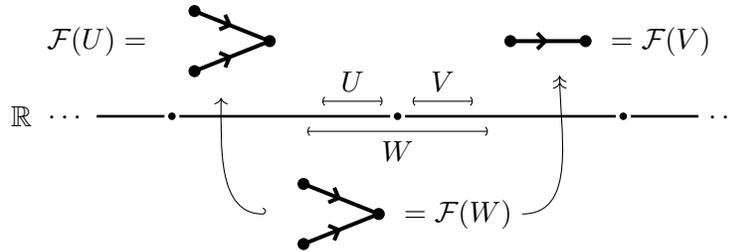

What we have described suggests the structure of an \emph{$\Rspace$-constructible sheaf} encoding the nesting poset. More detail is given by \cite{currypatel} as well as by \cite{patelmacpherson}
but we do not explore this here and leave it open for further research.

\subsection{Keeping Track of Critical Values}
\label{sec:tracking-values}

\edits{The techniques we have described here are concerned with the relative order of critical values, and only as a consequence of other structural changes. Using the order of critical values as a primary motivator opens up new directions of research, some of which} are addressed by ongoing work in vector field design~\cite{ZhouLazovskisCatanzaro2019} \edits{and existing literature on distances for topological invariants~\cite{difabiolandi2016,bauerlandimemoli2018,damicofrosinilandi2010}}.

In addition, specifying function values on singularities would allow for an additional measure on how ``far apart" two functions are: one could use the number of moves together with the difference of function values to measure their differences. The ``complexity" of a Morse function $f$ could then be given by measuring the difference between $f$ and a baseline function, such as the height function $h$ on the standard embedding of $\Sspace^2$ in $\Rspace^3$.


\section*{Acknowledgments}
This paper grew out of a productive discussion during the special workshop ``Bridging Statistics and Sheaves'' at the Institute for Mathematics and Applications in May 2018. The authors would like to thank the organizers for putting together the workshop, the IMA for hosting the event, and Mikael Vejdemo-Johansson for insightful conversations at the onset of this collaboration. The authors also thank the reviewers for helpful comments and suggestions. JC is partially funded by NSF CCF-1850052. JL is partially funded by EP/P025072/1. BTF is partially funded by NSF CCF-1618605 and DMS-1664858. BW is partially funded by NSF IIS-1513616 and IIS-1910733.

\bibliographystyle{abbrv}


\end{document}